\newcommand{\Det}{\mathbf{Det}}
\newcommand\ceil[1]{\lceil #1 \rceil}
\DeclareMathOperator{\Tr}{\mathbf{Tr}}
\newtheorem{assumption}{Assumption}
\newtheorem{theorem}{Theorem}
\newtheorem{lemma}[theorem]{Lemma}
\newtheorem{proposition}[theorem]{Proposition}
\newtheorem{corollary}[theorem]{Corollary}
\newtheorem{remark}{Remark}
\numberwithin{assumption}{section}
\numberwithin{definition}{section}
\numberwithin{theorem}{section}
\numberwithin{remark}{section}
\begin{document}

\title{Affine-Invariant Global Non-Asymptotic Convergence Analysis of BFGS under Self-Concordance}

\author{Qiujiang Jin\thanks{UT Austin  \{qiujiangjin0@gmail.com\}} \qquad Aryan Mokhtari\thanks{UT Austin and Google Research  \{mokhtari@austin.utexas.edu\}}}

\maketitle

\begin{abstract}
In this paper, we establish global non-asymptotic convergence guarantees for the BFGS quasi-Newton method without requiring strong convexity or the Lipschitz continuity of the gradient or Hessian. Instead, we consider the setting where the objective function is strictly convex and strongly self-concordant. For an arbitrary initial point and any arbitrary positive-definite initial Hessian approximation, we prove global linear and superlinear convergence guarantees for BFGS when the step size is determined using a line search scheme satisfying the weak Wolfe conditions.  Moreover, all our global guarantees are affine-invariant, with the convergence rates depending solely on the initial error and the strongly self-concordant constant. Our results extend the global non-asymptotic convergence theory of BFGS beyond traditional assumptions and, for the first time, establish affine-invariant convergence guarantees—aligning with the inherent affine invariance of the BFGS method.
\end{abstract}

\section{Introduction}\label{sec:introduction}

In this paper, we consider the convex optimization problem
\begin{equation}
    \min_{x \in \mathbb{R}^d} f(x),
\end{equation}
where the function $f$ is twice differentiable and \textit{strictly} convex. We focus on quasi-Newton methods—iterative optimization algorithms that approximate the Hessian and its inverse using gradient information, making them efficient for large-scale problems where computing the Hessian is costly. Different variants update the Hessian approximation in distinct ways.
 The most famous quasi-Newton methods include the Davidon-Fletcher-Powell (DFP) method \cite{davidon1959variable,fletcher1963rapidly}, the Broyden-Fletcher-Goldfarb-Shanno (BFGS) method \cite{broyden1970convergence,fletcher1970new,goldfarb1970family,shanno1970conditioning}, the Symmetric Rank-One~(SR1) method \cite{conn1991convergence,khalfan1993theoretical},  and the Broyden method \cite{broyden1965class}. There are also variants of these methods, including limited memory BFGS~\cite{liu1989limited, nocedal1980updating}, randomized quasi-Newton methods \cite{gower2016stochastic,gower2017randomized,kovalev2020fast,lin2021greedy, lin2022explicit}, and greedy quasi-Newton methods \cite{lin2021greedy,lin2022explicit,rodomanov2020greedy,ji2023greedy}.

In this paper, we focus exclusively on the BFGS method, one of the most widely used and well-regarded quasi-Newton algorithms. Specifically, we analyze its convergence guarantees in the setting where the objective function is strictly convex and self-concordant and establish non-asymptotic guarantees for this case. Before highlighting our contributions, we first provide a summary of the existing convergence guarantees for BFGS as established in prior work.

\textbf{Classic asymptotic guarantees.} The local asymptotic superlinear convergence of quasi-Newton methods, including BFGS, has been established in several works~\cite{broyden1973local,dennis1974characterization,griewank1982local,dennis1989convergence,yuan1991modified,al1998global,li1999globally,yabe2007local,mokhtari2017iqn,gao2019quasi}. Similarly, their global convergence under globalization strategies like line search and trust-region methods has been analyzed~\cite{khalfan1993theoretical,QN_tool,powell1971convergence, Powell, byrd1987global, byrd1996analysis, SR12024}. However, these results are asymptotic and lack showing explicit rates.

\textbf{Non-asymptotic guarantees under stronger assumptions.} Recently, there were several breakthroughs regarding the non-asymptotic local superlinear convergence analysis of BFGS including~\cite{rodomanov2020rates,rodomanov2020ratesnew,ye2023towards,qiujiang2020quasinewton} for the case that the objective function is strongly convex. More precisely, these works established an explicit superlinear rate of $\mathcal{O}({1}/{\sqrt{t}})^t$ under the assumptions of strong convexity and Lipschitz continuity of the gradient and Hessian, given that the initial point is within a local neighborhood of the optimum and the initial Hessian approximation satisfies certain conditions. Later, these local analyses were extended and non-asymptotic global convergence rates of BFGS were established in~\cite{krutikov2023convergence,antonglobal,jin2024non,qiujiang2024quasinewton} under similar assumptions on the objective function. In particular, \cite{jin2024non} established global explicit superlinear convergence guarantees of the whole convex class of Broyden's family of quasi-Newton methods including both BFGS and DFP with step size satisfying the exact line search schemes. In a follow up work \cite{qiujiang2024quasinewton}, the explicit global convergence rates for BFGS was established when deployed with an inexact line search satisfying the Armijo-Wolfe conditions. Specifically, these works show that when the objective is $\mu$-strongly convex, its gradient is $L$-Lipschitz smooth, and its Hessian is $K$-Lipschitz continuous, a global linear convergence rate of $(1 - {1}/{\kappa})^t$ can be achieved—matching that of gradient descent, where $\kappa = L/\mu$ is the condition number. Moreover, global superlinear convergence rates of $({(d\kappa + C_0\kappa)}/{t})^t$ and $({(C_0 d\log{\kappa} + C_0\kappa)}/{t})^t$ were established under specific choices of the initial Hessian approximation, where $d$ is the problem dimension,  and $C_0$ is the initial function value gap between the initial iterate $x_0$ and the unique optimal solution $x_*$.


While these results represent significant progress in studying quasi-Newton methods, the established non-asymptotic guarantees for BFGS, and most quasi-Newton methods in general, have two major limitations. First, these results rely on relatively strong assumptions that may not hold in many practical settings. For instance, in the case of logistic regression, the loss is strictly convex but not necessarily strongly convex. Similarly, a log-barrier function does not satisfy the global Lipschitz condition for gradient. Second, all previously established non-asymptotic convergence rates for BFGS are not affine invariant, as they depend on parameters such as the strong convexity constant \(\mu\), gradient Lipschitz constant \(L\), and Hessian Lipschitz constant \(K\), all of which vary under a change of basis or coordinate system in \(\mathbb{R}^d\). In contrast, BFGS is affine invariant with respect to linear transformations of the variables. This means that the convergence behavior of BFGS remains unaffected by the choice of coordinate system and instead depends solely on the topological structure of  \(f\). 

\noindent\textbf{Contributions:} We aim to address the discussed issues, and our main contributions are as follows:
\begin{itemize}

\item 
    We establish global non-asymptotic linear and superlinear convergence rates for BFGS without requiring strong convexity or Lipschitz continuity of the gradient or Hessian. Instead, we consider functions that are strictly convex and strongly self-concordant. Our analysis provides explicit global convergence guarantees for BFGS when the step size is selected via a line search satisfying the weak Wolfe conditions. These guarantees hold for any initial point \(x_0\) and any positive-definite initial Hessian approximation \(B_0\). 

\item 
   We derive explicit convergence rates for the BFGS  method that are affine invariant. Specifically, our results show that both global linear and superlinear convergence rates depend solely on the strongly self-concordant constant, which remains invariant under linear transformations of the variables. To the best of our knowledge, these are the first theoretical convergence rates consistent with the affine invariance property of the BFGS method, reflecting its independence from the choice of coordinate system. 

\end{itemize}

\noindent\textbf{Notation.} We denote the $l_2$-norm by $\|\cdot\|$ and the set of $d \times d$ symmetric positive definite matrices by $\mathbb{S}^d_{++}$. We write $A \preceq B$ if $B - A$ is positive semi-definite, and $A \prec B$ if it is positive definite. The trace and determinant of matrix $A$ are represented as $\Tr(A)$ and $\Det(A)$, respectively. For function $f$ that is strictly convex, we define the weighted norm $\|.\|_x$ as $\|u\|_x := \sqrt{u^\top \nabla^2{f(x)} u}$

\section{Background and Preliminaries}\label{sec:background}


In this section, we provide a brief overview of the BFGS quasi-Newton method. At iteration \( t \), \( x_t \) denotes the current iterate, \( g_t = \nabla f(x_t) \) the gradient of the objective function, and \( B_t \) the Hessian approximation matrix. The general template of quasi-Newton methods update is given by
\begin{equation}\label{QN_update}
    x_{t + 1} = x_t + \eta_t d_t, \qquad d_t = -B_t^{-1} g_t,
\end{equation}
where $\eta_t > 0$ is the step size. By defining the variable difference and the gradient difference as
\begin{equation}\label{QN_difference}
    s_t := x_{t + 1} - x_t, \qquad y_t := \nabla f(x_{t + 1}) - \nabla f(x_t),
\end{equation}
we can present the Hessian approximation matrix update for BFGS as follows:
\begin{equation}\label{eq:BFGS_update}
    B_{t + 1} := B_t - \frac{B_t s_t s_t^\top B_t}{s_t^\top B_t s_t} + \frac{y_t y_t^\top}{s_t^\top y_t}.
\end{equation}
Further, if we define the inverse of Hessian approximation  as $H_t := B_t^{-1}$, using the Sherman-Morrison-Woodbury formula, we have $H_{t+1} = (I-\frac{s_t y_t^\top}{y_t^\top s_t}) H_t (I-\frac{ y_t s_t^\top}{s_t^\top y_t}) +\frac{s_t s_t^\top}{y_t^\top s_t}$. Note that if the function $f$ is strictly convex -- as considered in this paper -- and the initial Hessian approximation matrix is positive definite, then $B_{t} \in \mathbb{S}^d_{++}$ for any iterations $t > 0$ (Chapter 6 \cite{nocedal2006numerical}). In this paper, we focus on the analysis of BFGS when $\eta_t$ is selected based on the Armijo-Wolfe conditions, given by
\begin{align}
    f(x_t + \eta_t d_t) & \leq f(x_t) + \alpha \eta_t\nabla{f(x_t)}^\top d_t, \label{sufficient_decrease}\\
    \nabla{f(x_t + \eta_t d_t)}^\top d_t & \geq \beta \nabla{f(x_t)}^\top d_t, \label{curvature_condition}
\end{align}
where $\alpha$ and $\beta$ are the line search parameters, satisfying $0 < \alpha < \beta < 1$ and $0 < \alpha < {1}/{2}$. 

\noindent \textbf{Affine Invariance property of BFGS.} From \cite{affine1, affine2}, it is known that the iterates of BFGS are \textit{affine invariant}. This property underscores the necessity of an analysis framework aligned with affine invariance, which is the main focus of our paper. We state the following proposition for completeness. 

\begin{proposition}\label{BFGS_affine}
    Let the iterations $\{x_t\}_{t=0}^{+\infty}$ be generated by the BFGS algorithm applied to the objective function $f(x)$, as defined in \eqref{QN_update}-\eqref{eq:BFGS_update}. Consider the iterates $\{\dot{x}_t\}_{t=0}^{+\infty}$ produced by applying BFGS to the transformed function $\phi(x) = f(Ax)$, where $A \in \mathbb{R}^{d \times d}$ is a non-singular matrix. Assume that the initializations satisfy $\dot{x}_0 = A^{-1}x_0$ and $\dot{B}_0 = A^\top B_0 A$. Then, for any $t \geq 0$, the following relationships hold: $\dot{x}_t = A^{-1}x_t$, $\dot{B}_t = A^\top B_t A$ and $\phi(\dot{x}_t) = f(x_t)$.
\end{proposition}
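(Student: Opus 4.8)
The plan is to argue by induction on $t$. The base case $t=0$ holds by the assumed initialization $\dot{x}_0 = A^{-1}x_0$ and $\dot{B}_0 = A^\top B_0 A$; note that $\dot{B}_0 \in \mathbb{S}^d_{++}$ because $B_0 \in \mathbb{S}^d_{++}$ and $A$ is non-singular, so BFGS applied to $\phi$ is well-defined. For the inductive step I would assume $\dot{x}_t = A^{-1}x_t$ and $\dot{B}_t = A^\top B_t A$ and establish the same two identities at index $t+1$.

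First, by the chain rule $\nabla\phi(\dot{x}_t) = A^\top \nabla f(A\dot{x}_t) = A^\top \nabla f(x_t) = A^\top g_t$, so the search direction transforms as
$$\dot{d}_t = -\dot{B}_t^{-1}\nabla\phi(\dot{x}_t) = -(A^\top B_t A)^{-1}A^\top g_t = -A^{-1}B_t^{-1}g_t = A^{-1}d_t.$$
Next I would check that the line search returns the same step size. Along the two rays one has $\phi(\dot{x}_t + \eta\dot{d}_t) = f(A\dot{x}_t + \eta A\dot{d}_t) = f(x_t + \eta d_t)$, and $\nabla\phi(\dot{x}_t + \eta\dot{d}_t)^\top \dot{d}_t = \nabla f(x_t + \eta d_t)^\top A A^{-1} d_t = \nabla f(x_t + \eta d_t)^\top d_t$; in particular $\nabla\phi(\dot{x}_t)^\top \dot{d}_t = g_t^\top d_t$. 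Hence the Armijo--Wolfe conditions \eqref{sufficient_decrease}--\eqref{curvature_condition} for $\phi$ at $(\dot{x}_t,\dot{d}_t)$ are identical to those for $f$ at $(x_t,d_t)$, so the admissible step-size sets coincide and we may take $\dot{\eta}_t = \eta_t$. Therefore $\dot{x}_{t+1} = \dot{x}_t + \eta_t\dot{d}_t = A^{-1}(x_t + \eta_t d_t) = A^{-1}x_{t+1}$.

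It then remains to propagate the Hessian-approximation identity. From the above, the curvature pair transforms as $\dot{s}_t = \dot{x}_{t+1}-\dot{x}_t = A^{-1}s_t$ and $\dot{y}_t = \nabla\phi(\dot{x}_{t+1}) - \nabla\phi(\dot{x}_t) = A^\top y_t$. Substituting these and $\dot{B}_t = A^\top B_t A$ into the BFGS update \eqref{eq:BFGS_update}, and using the scalar/matrix identities $\dot{s}_t^\top \dot{B}_t \dot{s}_t = s_t^\top B_t s_t$, $\dot{B}_t\dot{s}_t = A^\top B_t s_t$, $\dot{s}_t^\top \dot{y}_t = s_t^\top y_t$, and $\dot{y}_t\dot{y}_t^\top = A^\top y_t y_t^\top A$, each of the three terms factors as $A^\top(\cdot)A$, which yields $\dot{B}_{t+1} = A^\top B_{t+1} A$ and closes the induction.

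I do not anticipate a serious obstacle; the argument is essentially a direct computation driven by the chain rule and the factorizations above. The only point deserving a word of care is the line-search step: one must observe that the weak Wolfe conditions are preserved under the substitution $(x_t,d_t)\mapsto(A^{-1}x_t,A^{-1}d_t)$, since both the function-value gaps and the directional derivatives entering \eqref{sufficient_decrease}--\eqref{curvature_condition} are unchanged, so any line search that depends only on these quantities makes the same choice on the two problems.
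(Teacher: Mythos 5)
Your proof is correct and follows essentially the same inductive argument as the paper, tracking how $x_t$, $g_t$, $d_t$, $s_t$, $y_t$, and $B_t$ transform under the chain rule and substituting into the BFGS update. You in fact add a small improvement over the paper's proof by explicitly verifying that the Armijo--Wolfe admissible step-size sets for $f$ and $\phi$ coincide (so that $\dot{\eta}_t = \eta_t$ can be taken), a point the paper's proof implicitly assumes.
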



\subsection{Assumptions}
Next, we state our assumptions and compare them with those used in prior work.

\begin{assumption}\label{ass_self_concodant}
The function $f$ satisfies the following conditions: (i) it is twice differentiable and strictly convex, and (ii) it is strongly self-concordant with parameter $M > 0$, i.e., for any $x, y, z \in \mathbb{R}^d$
\begin{equation}
\nabla^2{f(x)} - \nabla^2{f(y)} \preceq M\|x - y\|_{z}\nabla^2{f(y)}.
\end{equation}
\end{assumption}
Our first assumption requires the objective function to be strictly convex, i.e., $\nabla^2 f(x) \succ 0$. This is indeed a weaker condition than the strong convexity assumptions used in prior works that establish non-asymptotic guarantees for BFGS, such as \cite{rodomanov2020rates,rodomanov2020ratesnew,ye2023towards,qiujiang2020quasinewton,krutikov2023convergence,antonglobal,jin2024non,qiujiang2024quasinewton}. The second condition concerns strong self-concordance, which defines a subclass of self-concordant functions. Specifically, if $f$ is $M$-strongly self-concordant, then it is also $M/2$-self-concordant. To see this, fix $x \in \mathbb{R}^d$ and $u \in \mathbb{R}^d$. The inequality $u^\top (\nabla^2{f(x + tu)} - \nabla^2{f(x)}) u \leq tM\|u\|^{3}_{x}$ holds, and dividing by $t$ and taking the limit as $t \to 0$ yields $D^{3}f(x)[u, u, u] \leq M\|u\|^{3}_{x}$. A symmetric argument shows $|D^{3}f(x)[u, u, u]| \leq M\|u\|^{3}_{x}$, implying that $f$ is self-concordant with parameter $M/2$. Moreover, Theorem 5.1.2 of \cite{convexlecture} shows that the strong self-concordance parameter $M$ is affine invariant: for any non-singular $A \in \mathbb{R}^{d\times d}$, the function $\phi(x) = f(Ax)$ remains $M$-strongly self-concordant.

Next, we explain why our assumptions are strictly weaker than the more common conditions of strong convexity, Lipschitz gradient, and Lipschitz Hessian. Prior work (e.g., Example 4.1 in~\cite{rodomanov2020greedy}) shows that if a function is strongly convex and its Hessian is Lipschitz with respect to a matrix $B \succeq 0$, then it is also strongly self-concordant. However, the converse does not hold: strong self-concordance does not imply strong convexity, gradient smoothness, or Lipschitz Hessian continuity.

As a concrete example, we can consider the log-sum-exp function formally defined as $f(x) = \log{(\sum_{i = 1}^{n}\exp({c_i^\top x - b_i}))} + \sum_{i = 1}^{n}(c_i^\top x)^2$, where $\{c_i\}_{i = 1}^{n} \in \mathbb{R}^d$ and $\{b_i\}_{i = 1}^{n} \in \mathbb{R}$. This function is not strongly convex with respect to the identity matrix $I$, due to the absence of explicit $\ell_2$ regularization. However, it can be shown to be strongly convex and have Lipschitz Hessian with respect to the matrix $B = \sum_{i = 1}^{n}c_ic_i^\top$ (Note that this matrix could be possibly singular). As a result, it is strongly self-concordant but not strongly convex in the standard sense; check Appendix~\ref{proof_of_concordance}. Other examples include the hard cubic function and the logistic regression objective discussed in Section~\ref{sec:experiments}. Another illustrative case is the log-barrier function $f(x) = -\log(1 - x^2)$, which is strongly self-concordant with $M = 4$ for $|x - y| \leq 1/2$, yet its gradient and Hessian are not Lipschitz continuous. Full detailed discussion for these examples is provided in Appendix~\ref{proof_of_concordance}.

\subsection{Definitions}\label{sec:intermediate}

Next, we state our definitions and notations. For any $A \in \mathbb{S}^d_{++}$, we define  $\Psi(A)$ as
\begin{equation}\label{potential_function}
    \Psi(A) := \Tr(A) - d - \log{\Det(A)}.
\end{equation}
This function characterizes the distance between matrix $A$ and the identity matrix $I$. Note that $\Psi(A) \geq 0$ for any $A \in \mathbb{S}^d_{++}$ and $\Psi(A) = 0$ if and only if $A = I$.

A common technique in the analysis of quasi-Newton methods involves the use of a reweighting matrix; see, e.g.,  \cite{QN_tool}. We also use this approach in our analysis. Specifically, given any weight matrix $P \in \mathbb{S}^d_{++}$, we define the weighted versions of the vectors $g_t$, $s_t$, $y_t$, $d_t$ and the matrix $B_t$ as 
\begin{equation}\label{weighted_vector}
\hat{g}_t\! := \!P^{-\frac{1}{2}}g_t,\quad  
\hat{s}_t \!:= \! P^{\frac{1}{2}} s_t, \quad 
\hat{y}_t \! := \! P^{-\frac{1}{2}}y_t,\quad  
\hat{d}_t \! := \! P^{\frac{1}{2}} d_t,\quad 
\hat{B}_t := P^{-\frac{1}{2}}B_t P^{-\frac{1}{2}}.
\end{equation}
The weight matrix $P$ plays fundamental role in our proof and the global linear and superlinear convergence rates are based on \textit{different choices of $P$}. Note that the update rule for the weighted version of Hessian approximation matrices $\hat{B}_t$ is similar to the update rule of the unweighted $B_t$, i.e., $\hat{B}_{t+ 1} = \hat{B}_t - \frac{\hat{B}_t \hat{s}_t \hat{s}_t^\top \hat{B}_t}{\hat{s}_t^\top \hat{B}_t \hat{s}_t} + \frac{\hat{y}_t \hat{y}_t^\top}{\hat{s}_t^\top \hat{y}_t}$. We next introduce a common function in self-concordant analysis:
\begin{equation}\label{definition_omega}
    \omega(x) := x - \log{(x + 1)}.
\end{equation}
As shown in Lemma~\ref{lemma_omega}, $\omega(x)$ is strictly increasing for $x > 0$. Hence, we can define its inverse function $\omega^{-1}(.)$  such that $\omega^{-1}(\omega(x)) = x$  for $x > 0$. It can be verified that \(\omega^{-1}(x)\) is also strictly increasing for \(x > 0\). Further, since \(\omega(x)\) is a convex function, \(\omega^{-1}(x)\) is concave. We use $\omega^{-1}$ to measure suboptimality of the iterates $\{x_t\}_{t = 0}^{+\infty}$ and define the sequences $\{C_t\}_{t = 0}^{+\infty}$ and $\{D_t\}_{t = 0}^{+\infty}$ as
\begin{equation}\label{suboptimality}
    C_t := f(x_t) - f(x_*),\qquad 
    D_t := 2\omega^{-1}\left({M^2}C_t/4\right),
\end{equation} 
Indeed, both of the above sequences are always non-negative. 

\begin{remark}
  The expression $\omega^{-1}(.)$ frequently appears in our complexity bounds. To better understand this function and its approximation, as shown in Lemma \ref{lemma_omega}, we can use the approximation $\omega^{-1}(a) \approx (a + \sqrt{2a})$. Consequently, if $a < 1$, $\omega^{-1}(a) = \mathcal{O}(\sqrt{a})$, and if $a > 1$, $\omega^{-1}(a) = \mathcal{O}(a)$.
\end{remark}

With these preliminaries, the next two sections prove global linear and superlinear convergence rates of BFGS for strictly convex, strongly self-concordant functions—rates that remain invariant under linear transformations, consistent with BFGS's affine invariance.

\section{Global Linear Convergence Rates}\label{sec:linear}

In this section, we present the global linear convergence results of BFGS when the step size is selected based on the weak Wolfe conditions introduced in \eqref{sufficient_decrease} and \eqref{curvature_condition}. Before we begin, we need to define the following weighted versions of the initial Hessian approximation matrix \( B_0 \):
\begin{equation}\label{def_B_bar}
    \bar{B_0} = \frac{\nabla^2 f(x_*)^{-\frac{1}{2}} B_0 \nabla^2 f(x_*)^{-\frac{1}{2}}}{1 + D_0}, \qquad 
    \tilde{B}_0 = \nabla^2 f(x_*)^{-\frac{1}{2}} B_0 \nabla^2 f(x_*)^{-\frac{1}{2}}.
\end{equation} 
These two weighted versions of \( B_0 \) correspond to the weight matrices \( P = (1 + D_0)\nabla^2 f(x_*) \) and \( P = \nabla^2 f(x_*) \), respectively. They play a key role in the non-asymptotic analysis of BFGS for self-concordant functions. Next, we present our first global explicit linear convergence rate of BFGS for any initial point $x_0$ and any initial Hessian approximation matrix $B_0 \in \mathbb{S}^d_{++}$. 

\begin{theorem}\label{thm:first_linear_rate}
Suppose Assumption~\ref{ass_self_concodant} holds. Let $\{x_t\}_{t\geq 0}$ be the iterates generated by BFGS, where the step size satisfies the Armijo-Wolfe conditions in \eqref{sufficient_decrease} and \eqref{curvature_condition}. Recall $\Psi(\cdot)$ in \eqref{potential_function}, $D_0$ in \eqref{suboptimality} and $\bar{B}_0$ in \eqref{def_B_bar}. For any initial point $x_0 \in \mathbb{R}^{d}$ and any initial Hessian approximation $B_0 \in \mathbb{S}^d_{++}$, we have
\begin{equation}\label{eq:first_linear_rate}
    \frac{f(x_t) - f(x_*)}{f(x_0) - f(x_*)} \leq \left(1 - \frac{\alpha(1 - \beta)e^{-\frac{\Psi(\bar{B}_0)}{t}}}{(1 + D_0)^2}\right)^{t}.
\end{equation}
Moreover, when $t \geq \Psi(\bar{B}_0)$, we obtain that
\begin{equation}\label{eq:first_linear_rate_2}
    \frac{f(x_t) - f(x_*)}{f(x_0) - f(x_*)} \leq \left(1 - \frac{\alpha(1 - \beta)}{3(1 + D_0)^2}\right)^{t}.
\end{equation}
\end{theorem}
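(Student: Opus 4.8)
The plan is to track the suboptimality gap $C_t = f(x_t)-f(x_*)$ through one BFGS step and show that each step gives a multiplicative decrease of the form $C_{t+1} \le (1-\rho_t)C_t$ for a per-step factor $\rho_t$ whose product over $t$ steps yields \eqref{eq:first_linear_rate}. The natural quantity to control is the Newton-type decrement along the BFGS direction $d_t = -B_t^{-1}g_t$. Using the Armijo condition \eqref{sufficient_decrease}, the decrease $C_t - C_{t+1}$ is lower bounded by $\alpha\eta_t\, g_t^\top B_t^{-1} g_t$, and the Wolfe curvature condition \eqref{curvature_condition} together with strong self-concordance will be used to lower bound the step size $\eta_t$ (this is where the factor $(1-\beta)$ enters). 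So the first step is: convert the Wolfe/Armijo pair into a one-step inequality $C_t - C_{t+1} \ge \alpha(1-\beta)\,\frac{g_t^\top B_t^{-1}g_t}{\text{(something)}}$, with the denominator expressed in terms of a weighted norm so that everything is affine-invariant.

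Second, I would pass to the reweighted quantities \eqref{weighted_vector} with the weight matrix $P = (1+D_0)\nabla^2 f(x_*)$, so that $\hat B_0 = \bar B_0$. The point of the reweighting is twofold: it makes $g_t^\top B_t^{-1} g_t$ comparable to $C_t$ up to spectral factors of $\hat B_t$, and it lets the self-concordance control how much $\nabla^2 f(x_t)$ can deviate from $\nabla^2 f(x_*)$ along the trajectory — this deviation is governed by $D_0$ (hence the $(1+D_0)^2$ in the denominator, one power from relating the gradient norm to the function gap and one from relating $\hat s_t^\top \hat B_t \hat s_t$ to the step). Concretely I expect the one-step bound to read $C_{t+1} \le \bigl(1 - \tfrac{\alpha(1-\beta)}{(1+D_0)^2}\cdot \tfrac{1}{\lambda_{\max}(\hat B_t)}\bigr) C_t$ (or something of this shape with $\Tr(\hat B_t)$), so the crux becomes controlling $\hat B_t$ over the iterations.

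Third — and this is the main obstacle — I need a handle on how large $\hat B_t$ can get. The standard device is the potential function $\Psi(\cdot)$ from \eqref{potential_function}: one shows that the BFGS update, under strong self-concordance, satisfies a recursion of the form $\Psi(\hat B_{t+1}) \le \Psi(\hat B_t) + (\text{small error controlled by } M\|\hat s_t\|)$, and since the $\|\hat s_t\|$ terms are summable (they telescope against the decrease in $C_t$), $\Psi(\hat B_t)$ stays bounded by roughly $\Psi(\bar B_0)$ plus a constant. Since $\Psi(\hat B_t)\ge 0$ controls $\log\Det(\hat B_t)$ and $\Tr(\hat B_t)-d$, this bounds $\lambda_{\max}(\hat B_t) \le d + \Psi(\hat B_t)$ and keeps $\det$ away from zero. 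Then $\prod_{k<t}(1 - \rho_k) \le \exp(-\sum_{k<t}\rho_k)$, and a convexity/AM-GM argument on $\sum 1/\lambda_{\max}(\hat B_k)$ versus $\sum \log\lambda_{\max}(\hat B_k) \le \Psi(\bar B_0) + O(1)$ produces the $e^{-\Psi(\bar B_0)/t}$ factor averaged over the $t$ steps — this averaging step (turning a bound on the sum of logs into a bound on the sum of reciprocals via Jensen) is the one I'd expect to need the most care. The weak-Wolfe line search guarantees the curvature condition $s_t^\top y_t > 0$ so that all $B_t \succ 0$ and the potential argument is valid throughout.

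Finally, for \eqref{eq:first_linear_rate_2}: when $t \ge \Psi(\bar B_0)$ we have $\Psi(\bar B_0)/t \le 1$, so $e^{-\Psi(\bar B_0)/t} \ge e^{-1} > 1/3$, and substituting this into \eqref{eq:first_linear_rate} immediately gives the cleaner bound. This last part is a one-line consequence and not where the work lies.
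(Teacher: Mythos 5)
You have identified the right ingredients — the reweighting $P = (1+D_0)\nabla^2 f(x_*)$, the trace--logdet potential $\Psi$, the Armijo--Wolfe bounds $\hat p_t \geq \alpha$, $\hat n_t \geq 1-\beta$, the self-concordance bound $\hat q_t \geq (1+D_0)^{-2}$, and the AM--GM step to pass from a product to an averaged $e^{-\Psi/t}$ factor — and your handling of \eqref{eq:first_linear_rate_2} from \eqref{eq:first_linear_rate} is exactly the paper's one-liner. But the central step of the argument, as you describe it, does not close.

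The issue is what the potential argument actually buys you. You propose to lower-bound $1 - C_{t+1}/C_t$ by $\tfrac{\alpha(1-\beta)}{(1+D_0)^2}\cdot\tfrac{1}{\lambda_{\max}(\hat B_t)}$ and then argue ``$\sum_k \log\lambda_{\max}(\hat B_k) \lesssim \Psi(\bar B_0)$.'' But boundedness of $\Psi(\hat B_t)$ does not imply that the \emph{sum} of $\log\lambda_{\max}(\hat B_k)$ is bounded by $\Psi(\bar B_0)$: it only gives a per-iteration bound $\log\lambda_{\max}(\hat B_t) \leq \Psi(\hat B_t) + \mathcal{O}(d) = \mathcal{O}(\Psi(\bar B_0) + d)$, so the sum over $t$ iterations picks up a factor of $t$ and the dimension $d$, yielding $e^{-\mathcal{O}(\Psi(\bar B_0)+d)}$ rather than $e^{-\Psi(\bar B_0)/t}$ — destroying both the $1/t$ amortization and the dimension-free character of the bound. (As a side point, even the per-step estimate via Armijo--Wolfe really controls $\cos^2$ of the angle between $\hat g_t$ and $\hat B_t^{-1}\hat g_t$, which by Kantorovich involves $\lambda_{\min}(\hat B_t)/\lambda_{\max}(\hat B_t)$, not $1/\lambda_{\max}(\hat B_t)$.)

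The paper's route avoids this by never passing through a spectral quantity of $\hat B_t$ alone. It uses the exact identity $1 - \tfrac{C_{t+1}}{C_t} = \hat p_t\,\hat q_t\,\hat n_t\,\tfrac{\cos^2\hat\theta_t}{\hat m_t}$ (Proposition~\ref{lemma_bound}), with $\cos\hat\theta_t$ and $\hat m_t$ as in \eqref{eq:theta}, \eqref{eq:def_alpha_q_m}. The trace--logdet potential recursion for the BFGS rank-two update (Proposition~\ref{lemma_BFGS}) is
\begin{equation*}
\Psi(\hat B_{t+1}) \leq \Psi(\hat B_t) + \left(\frac{\|\hat y_t\|^2}{\hat y_t^\top\hat s_t} - 1\right) + \log\frac{\cos^2\hat\theta_t}{\hat m_t},
\end{equation*}
and the key observation is that with the weight $P=(1+D_0)\nabla^2 f(x_*)$ and the monotone decrease $D_t \leq D_0$, strong self-concordance forces $\hat J_t \preceq I$, hence $\tfrac{\|\hat y_t\|^2}{\hat y_t^\top\hat s_t}\leq 1$ and the error term is \emph{nonpositive} — it does not need to be summed or amortized, contrary to your ``small error controlled by $M\|\hat s_t\|$, summable via telescoping against $C_t$'' picture. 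Combined with $\Psi(\hat B_t)\geq 0$, this yields $\sum_{i<t}\log\tfrac{\cos^2\hat\theta_i}{\hat m_i}\geq -\Psi(\bar B_0)$, i.e., $\prod_{i<t}\tfrac{\cos^2\hat\theta_i}{\hat m_i}\geq e^{-\Psi(\bar B_0)}$, and this product — not a product of reciprocals of $\lambda_{\max}(\hat B_i)$ — is what feeds into the AM--GM step to give \eqref{eq:first_linear_rate}. In short: the potential telescopes against $\log\tfrac{\cos^2\hat\theta_t}{\hat m_t}$ specifically, because of the algebraic structure of the BFGS rank-two update; your proposal replaces this quantity by a spectral proxy of $\hat B_t$ that the potential recursion does not control in the summed sense you need.
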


Theorem~\ref{thm:first_linear_rate} states that BFGS converges globally at a linear rate, influenced by the line search parameters (as expected), the term $\Psi(\bar{B}_0)$, which quantifies the discrepancy between the initial Hessian approximation and the optimal one, and $D_0$, which depends on the suboptimality of the initial function value and the strongly self-concordance parameter.
To further simplify the expression, as shown in the second result, when $t \geq \Psi(\bar{B}_0)$, the linear convergence rate can be further simplified as  $\mathcal{O}(1 - 1/(1 + D_0)^2)$. Hence, $D_0=2\omega^{-1}({M^2}(f(x_0)-f(x_*)/4)$ indicates the rate. 

Two remarks follow the above result. First, our global linear convergence rate does not require assuming strong convexity or gradient Lipschitz-ness. Second, the linear convergence rate is affine invariant across different linear systems, consistent with the affine invariance property of BFGS. 

We emphasize that the proof of Theorem~\ref{thm:first_linear_rate} for showing global linear convergence rate is fundamentally different from the analyses in prior work. Specifically, the results in \cite{jin2024non, qiujiang2020quasinewton, qiujiang2024quasinewton} heavily depend on the strong convexity and gradient Lipschitz-ness assumptions to showcase a linear convergence rate: they use the Lipschitz continuity of the gradient to upper bound  $\|y_t\|^2/s_t^\top y_t$ by $L$, and use $\mu$-strong convexity to establish the following lower bound $\|g_t\|^2/(f(x_t) - f(x_*)) \geq 2\mu$. These bounds are key to establishing the global linear rate of BFGS in prior work. In our setting such bounds do not hold and we do not have a universal upper bound on $\|y_t\|^2/s_t^\top y_t$ and a lower bound on $\|g_t\|^2/(f(x_t) - f(x_*))$. Instead, for the first bound, we transfer the inequality to the norm induced by the  weight matrix $P = (1 + D_0)\nabla^2{f(x_*)}$ and show under this norm and strong self-concordance assumption we have $\|\hat{y}_t\|^2/\hat{s}_t^\top \hat{y}_t\leq 1$. For the lower bound on $\|g_t\|^2/(f(x_t) - f(x_*))$, instead of a uniform lower bound, we show that it can be bounded below by $1/(1 + D_t)$, which is dependent on $x_t$, but we show that even this time-dependent lower bound is sufficient to establish a linear convergence rate for BFGS. For more details check the proofs of  Lemma~\ref{lemma_g} and Section~\ref{proof_thm_first_linear} in the Appendix. 

The linear convergence result depends on $\Psi(\bar{B}_0)$, and hence the choice of $B_0$ affects the convergence rate. In practice, it is often a scaled identity and a common choice is $B_0 = cI$, where $c = ({s^\top y})/{\|s\|^2}$, with $s = x_2 - x_1$, $y = \nabla f(x_2) - \nabla f(x_1)$, and $x_1, x_2 $ as two randomly selected vectors. In the next corollary, we present our global linear rate when $B_0 = a I$ where $a > 0$ is an arbitrary constant.

\begin{corollary}\label{coro:first_linear_rate}
Suppose Assumptions~\ref{ass_self_concodant} holds, $\{x_t\}_{t\geq 0}$  are generated by BFGS with step size satisfying the Armijo-Wolfe conditions in \eqref{sufficient_decrease} and \eqref{curvature_condition}, and $x_0 \in \mathbb{R}^{d}$ is an arbitrary initial point. If the initial Hessian approximation matrix is set as $B_0 = a I$ for any $a > 0$, then we have that
\begin{equation}\label{eq:first_linear_rate_a}
    \frac{f(x_t) - f(x_*)}{f(x_0) - f(x_*)} \leq \left(1 - \frac{\alpha(1 - \beta)e^{-\frac{\Delta_1}{t}}}{(1 + D_0)^2}\right)^{t},
\end{equation}
where $\Delta_1 := \Psi(\frac{a\nabla^{2}{f(x_*)^{-1}}}{1 + D_0})$ can be written as 
\begin{equation}\label{Delta_1}
\Delta_1 = \Tr\left[\frac{a\nabla^{2}{f(x_*)^{-1}}}{1 + D_0}\right] - d - \log{\Det\!\left[\frac{a\nabla^{2}{f(x_*)^{-1}}}{1 + D_0}\right]}.
\end{equation}
Moreover, when $t \geq \Delta_1$, we obtain that
\begin{equation}\label{eq:first_linear_rate_a_2}
    \frac{f(x_t) - f(x_*)}{f(x_0) - f(x_*)} \leq \left(1 - \frac{\alpha(1 - \beta)}{3(1 + D_0)^2}\right)^{t}.
\end{equation}
\end{corollary}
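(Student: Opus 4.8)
The plan is to obtain Corollary~\ref{coro:first_linear_rate} as an immediate specialization of Theorem~\ref{thm:first_linear_rate} to the choice $B_0 = aI$. Since the theorem already holds for \emph{any} $B_0 \in \mathbb{S}^d_{++}$, and $aI \in \mathbb{S}^d_{++}$ for every $a > 0$, the only substantive task is to evaluate the quantity $\Psi(\bar{B}_0)$ appearing in \eqref{eq:first_linear_rate}--\eqref{eq:first_linear_rate_2} for this particular initialization and to verify it equals $\Delta_1$ as stated.

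First I would substitute $B_0 = aI$ into the definition of $\bar{B}_0$ in \eqref{def_B_bar}. Because $aI$ is a scalar multiple of the identity it commutes with $\nabla^2 f(x_*)^{-1/2}$, and $\nabla^2 f(x_*)^{-1/2}\,\nabla^2 f(x_*)^{-1/2} = \nabla^2 f(x_*)^{-1}$, so
$$\bar{B}_0 = \frac{\nabla^2 f(x_*)^{-\frac12}(aI)\nabla^2 f(x_*)^{-\frac12}}{1+D_0} = \frac{a\,\nabla^2 f(x_*)^{-1}}{1+D_0}.$$
By strict convexity (Assumption~\ref{ass_self_concodant}) we have $\nabla^2 f(x_*)^{-1} \succ 0$, and since $a>0$ and $1+D_0>0$ are positive scalars, $\bar{B}_0 \in \mathbb{S}^d_{++}$, so $\Psi(\bar{B}_0)$ is well-defined. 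Unfolding the definition of $\Psi$ in \eqref{potential_function} then gives $\Psi(\bar{B}_0) = \Delta_1$ together with the explicit expansion \eqref{Delta_1}.

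Finally I would plug $\Psi(\bar{B}_0) = \Delta_1$ into the two bounds \eqref{eq:first_linear_rate} and \eqref{eq:first_linear_rate_2} of Theorem~\ref{thm:first_linear_rate}; this yields \eqref{eq:first_linear_rate_a} and \eqref{eq:first_linear_rate_a_2} respectively, with the condition $t \geq \Psi(\bar{B}_0)$ becoming $t \geq \Delta_1$. There is no genuine obstacle here — the corollary is just a reformulation of the theorem for a concrete $B_0$ — and the only computation is the elementary identity $\nabla^2 f(x_*)^{-1/2}(aI)\nabla^2 f(x_*)^{-1/2} = a\,\nabla^2 f(x_*)^{-1}$. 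The one point requiring (minor) care is confirming that the positive scalar factors can be pulled outside the product $\nabla^2 f(x_*)^{-1/2}\cdot\nabla^2 f(x_*)^{-1/2}$ and that positive definiteness is preserved, so that $\Psi$ and hence $\Delta_1$ are meaningful.
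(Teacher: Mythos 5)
Your proposal is correct and matches the paper's proof exactly: the paper states that the corollary follows by setting $B_0 = aI$ in Theorem~\ref{thm:first_linear_rate}, and the only computation needed is $\bar{B}_0 = \nabla^2 f(x_*)^{-1/2}(aI)\nabla^2 f(x_*)^{-1/2}/(1+D_0) = a\,\nabla^2 f(x_*)^{-1}/(1+D_0)$, which you carry out and substitute into $\Psi$ to get $\Delta_1$. No further comment needed.
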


Note that the proof of this corollary simply follows by setting $B_0 = a I$ in Theorem~\ref{thm:first_linear_rate}. The above result shows that by selecting $B_0 = a I$, the linear convergence rates of the BFGS method is totally determined by the initial suboptimality $D_0$ and the trace and determinant of the inverse matrix of the Hessian at $x_*$, which are also consistent with the affine invariance property of BFGS.

Next, we proceed to present an improved version of the result in Theorem \ref{thm:first_linear_rate}, showing that after a sufficient number of iterations, the linear rate of BFGS becomes independent of  $D_0$ and $B_0$.

\begin{theorem}\label{thm:second_linear_rate}
Suppose Assumptions~\ref{ass_self_concodant} holds, and let $\{x_t\}_{t\geq 0}$ be the iterates generated by the BFGS method with the Armijo-Wolfe line search in \eqref{sufficient_decrease} and \eqref{curvature_condition}. Recall the definition of $\Psi(\cdot)$ in \eqref{potential_function}, $D_0$ in \eqref{suboptimality} and $\bar{B}_0$, $\tilde{B}_0$ in \eqref{def_B_bar}. Then, for any initial point $x_0 \in \mathbb{R}^{d}$ and any initial Hessian approximation matrix $B_0 \in \mathbb{S}^d_{++}$, when $
    t \geq \Psi(\Tilde{B}_{0}) + 3D_0(\Psi(\bar{B}_{0}) + \frac{3(1 + D_0)^2}{\alpha(1 - \beta)})$,
we have 
\begin{equation}\label{eq:second_linear_rate}
    \frac{f(x_t) - f(x_*)}{f(x_0) - f(x_*)} \leq \left(1 - \frac{2\alpha(1 - \beta)}{3} \right)^{t}.
\end{equation}
\end{theorem}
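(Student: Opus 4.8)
The plan is to bootstrap Theorem~\ref{thm:first_linear_rate} in two phases. In the first phase I would run the base linear rate \eqref{eq:first_linear_rate_2} until the suboptimality is small enough that $D_t$ becomes negligible; in the second phase I would re-apply the linear-convergence machinery with a fresh weight matrix to get a rate independent of $D_0$ and $B_0$. Concretely, from \eqref{eq:first_linear_rate_2} (valid once $t \geq \Psi(\bar B_0)$) we have $C_t \leq (1 - \tfrac{\alpha(1-\beta)}{3(1+D_0)^2})^{t-\Psi(\bar B_0)} C_{\Psi(\bar B_0)}$ — or more simply, shifting origin, $C_t$ decays at the stated rate times $C_0$ — so after roughly $t_1 := \Psi(\bar B_0) + \tfrac{3(1+D_0)^2}{\alpha(1-\beta)}\log(\text{something}\cdot D_0)$ iterations we can guarantee $D_{t_1} \le \tfrac12$ (or any small constant). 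Since $D_t = 2\omega^{-1}(M^2 C_t/4)$ and $\omega^{-1}$ is increasing with $\omega^{-1}(a) = \mathcal{O}(\sqrt a)$ for small $a$, the number of iterations needed to shrink $D_t$ below a constant is $\mathcal{O}((1+D_0)^2/(\alpha(1-\beta)) \cdot \log(1+D_0))$, which one has to check is dominated by the quantity $3D_0(\Psi(\bar B_0) + \tfrac{3(1+D_0)^2}{\alpha(1-\beta)})$ appearing in the threshold; the factor $D_0$ (rather than $\log D_0$) is the slack that makes this work, using $\log(1+x) \le x$.

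For the second phase I would restart the analysis treating $x_{t_1}$ as a new initial point. The key point is that BFGS's own self-correcting behavior, together with strong self-concordance, means the Hessian approximation has already been "tamed": I expect an intermediate lemma from the linear-rate proof (the one controlling $\sum_t \Psi$-type potential decreases, analogous to Lemma~\ref{lemma_g} and the argument in Section~\ref{proof_thm_first_linear}) gives that $\Psi(\hat B_{t_1})$ with respect to the weight $P = \nabla^2 f(x_*)$ is bounded by $\Psi(\tilde B_0)$ plus an accumulated error term of the form $C\sum_{k<t_1} D_k \le C' D_0 \cdot (\text{horizon})$. This is precisely why the threshold reads $\Psi(\tilde B_0) + 3D_0(\Psi(\bar B_0) + \tfrac{3(1+D_0)^2}{\alpha(1-\beta)})$: the first term is the initial potential in the $\nabla^2 f(x_*)$-weighting, and the second term bounds the total potential growth accumulated while $D_t$ was still non-negligible during phase one. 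Once $D_{t} \le \tfrac12$ and the potential $\Psi$ is controlled, re-running \eqref{eq:first_linear_rate} (or its per-step version) with $D_t \le \tfrac12$ gives a per-iteration contraction factor $1 - \tfrac{\alpha(1-\beta) e^{-\Psi/\text{horizon}}}{(1+1/2)^2} \ge 1 - \tfrac{\alpha(1-\beta)}{2}\cdot(\text{const})$; chasing the constants (with the residual horizon large enough that $e^{-\Psi/t} \ge$ a definite constant like $e^{-1}$ or better) should land on exactly $1 - \tfrac{2\alpha(1-\beta)}{3}$.

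The main obstacle I anticipate is making the two-phase bookkeeping airtight: one needs a \emph{per-step} version of the linear rate (not just the telescoped form \eqref{eq:first_linear_rate}) that exposes the contraction factor $1 - \tfrac{\alpha(1-\beta)}{(1+D_t)^2}$ at each step with the \emph{current} $D_t$, and one needs the accumulated-potential bound $\Psi(\hat B_t) \le \Psi(\tilde B_0) + C\sum_{k<t} D_k$ under the $\nabla^2 f(x_*)$-weighting. Both should already be implicit in the proof of Theorem~\ref{thm:first_linear_rate}, but extracting them in the right form — and verifying that $\sum_{k} D_k$ over the phase-one horizon is at most $3D_0(\Psi(\bar B_0) + \tfrac{3(1+D_0)^2}{\alpha(1-\beta)})$ using geometric decay of $C_k$ and $D_k = \mathcal{O}(\sqrt{C_k})$ — is where the real care is needed. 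A secondary subtlety is ensuring the residual number of iterations after phase one is still large enough that the $e^{-\Psi(\cdot)/t}$ factor does not erode the constant below what is needed for the clean $\tfrac{2\alpha(1-\beta)}{3}$ bound; this is handled by choosing the split point so that both the phase-one cost and phase-two horizon are comparable fractions of the total $t$.
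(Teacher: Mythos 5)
Your high-level direction — use Theorem~\ref{thm:first_linear_rate}'s geometric decay to bound $\sum_{i<t} D_i$, and feed that accumulated quantity into a potential-style bound under the weight $P = \nabla^2 f(x_*)$ — is the right engine, and it is exactly what the paper does. However, the paper's proof is not a two-phase restart: it proves a single intermediate inequality, valid for all $t$,
\begin{equation*}
\frac{f(x_t)-f(x_*)}{f(x_0)-f(x_*)} \;\le\; \left(1 - 2\alpha(1-\beta)\,e^{-\frac{\Psi(\tilde B_0) + 3\sum_{i=0}^{t-1} D_i}{t}}\right)^{t},
\end{equation*}
by applying Proposition~\ref{lemma_bound} (the product bound) to the full range $i=0,\dots,t-1$ with $P=\nabla^2 f(x_*)$, using $\|\hat y_i\|^2/\hat s_i^\top \hat y_i \le 1+D_i$ and $\hat q_i \ge 2/(1+D_i)^2 \ge 2e^{-2D_i}$. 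It then bounds $\sum_{i=0}^{t-1} D_i \le D_0\bigl(\Psi(\bar B_0) + \tfrac{3(1+D_0)^2}{\alpha(1-\beta)}\bigr)$ via the first linear rate (splitting the sum at $\lceil\Psi(\bar B_0)\rceil$ and using $D_i \le \tfrac{M}{2}\sqrt{2C_i} + \tfrac{M^2}{4}C_i$ with the geometric decay). When $t$ exceeds $\Psi(\tilde B_0)+3\sum D_i$, the exponent is at most $1$, and $e^{-1}>1/3$ gives $1-\tfrac{2\alpha(1-\beta)}{3}$.

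Your two-phase framing has a concrete obstacle you should be aware of: a genuine restart at $t_1$ requires an explicit bound on $\Psi(\hat B_{t_1})$ in the $\nabla^2 f(x_*)$-weighting, but the one-step potential recursion \eqref{eq:potential_decrease} carries a term $\log\tfrac{\cos^2\hat\theta_t}{\hat m_t}$ whose sign is not controlled, so you cannot simply conclude $\Psi(\hat B_{t_1}) \le \Psi(\tilde B_0) + C\sum_{k<t_1}D_k$ without also retaining these $\log$ terms. The paper sidesteps this entirely: those $\log$ terms never get isolated, because Proposition~\ref{lemma_bound} keeps them inside the product $\prod_i \cos^2\hat\theta_i/\hat m_i$, which is lower-bounded in aggregate by $e^{-\Psi(\tilde B_0)-\sum D_i}$ via \eqref{eq:sum_of_logs}. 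Any clean version of your restart would essentially re-derive that telescoping identity, at which point the phase split adds nothing.

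A second, smaller gap is in your constant-chasing sketch: you write the per-step contraction as $1 - \alpha(1-\beta)e^{-\Psi/t}/(1+D_t)^2$, which comes from Lemma~\ref{lemma_g}'s bound $\hat q_t \ge 1/(1+D_t)$ (combined with the weight normalization). To land on the factor $2$ in $2\alpha(1-\beta)$ you must instead use the sharper Lemma~\ref{lemma_eigenvalue}, $\hat q_t \ge 2/(1+D_t)^2$, which is a genuinely different estimate (a second-order Taylor identity rather than a first-order convexity bound). Without that factor $2$, even perfect bookkeeping in phase two lands on a strictly weaker constant than $1-\tfrac{2\alpha(1-\beta)}{3}$.
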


This theorem demonstrates that when the number of iterations is larger than $\Psi(\Tilde{B}_{0}) + 3D_0(\Psi(\bar{B}_{0}) + \frac{3(1 + D_0)^2}{\alpha(1 - \beta)})$, BFGS with stepsize satisfying the Armijo-Wolfe conditions achieves an explicit linear convergence rate that is independent of the initial suboptimality $D_0$ and only determined by the line search parameters $\alpha$ and $\beta$ defined in \eqref{sufficient_decrease} and \eqref{curvature_condition}. That said, the point that transition to this fast rate happens still depends on the choice of $x_0$ and $B_0$, as stated in Theorem~\ref{thm:second_linear_rate}.
Similar to Corollary~\ref{coro:first_linear_rate}, next we present the special case of Theorem~\ref{thm:second_linear_rate} of $B_0 = a I$ for any $a > 0$.

\begin{corollary}\label{coro:second_linear_rate}
Suppose Assumptions~\ref{ass_self_concodant} holds, $\{x_t\}_{t\geq 0}$  are generated by BFGS with step size satisfying the Armijo-Wolfe conditions in \eqref{sufficient_decrease} and \eqref{curvature_condition}, and $x_0 \in \mathbb{R}^{d}$ is an arbitrary initial point. If the initial Hessian approximation matrix is set as $B_0 = a I$ for any $a > 0$, then the following rate holds
\begin{equation}\label{eq:second_linear_rate_a}
    \frac{f(x_t) - f(x_*)}{f(x_0) - f(x_*)} \leq \left(1 - \frac{2\alpha(1 - \beta)}{3} \right)^{t},
\end{equation}
for all iterates satisfying $
    t \geq \Delta_2 + 3D_0\left(\Delta_1 + \frac{3(1 + D_0)^2}{\alpha(1 - \beta)}\right)$,
where $\Delta_1$ is defined in \eqref{Delta_1} and 
\begin{equation}\label{Delta_2}
\Delta_2= \Tr(a\nabla^{2}{f(x_*)^{-1}}) - d - \log{\Det(a\nabla^{2}{f(x_*)^{-1}})}.
\end{equation}
\end{corollary}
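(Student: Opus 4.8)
The plan is to obtain this corollary as a direct specialization of Theorem~\ref{thm:second_linear_rate} to the choice $B_0 = aI$, exactly parallel to how Corollary~\ref{coro:first_linear_rate} follows from Theorem~\ref{thm:first_linear_rate}. The only work is to rewrite the two quantities $\Psi(\tilde B_0)$ and $\Psi(\bar B_0)$ that appear in the iteration threshold of Theorem~\ref{thm:second_linear_rate} in terms of $\nabla^2 f(x_*)$ when $B_0 = aI$. Substituting $B_0 = aI$ into the definitions in \eqref{def_B_bar} gives $\tilde B_0 = a\,\nabla^2 f(x_*)^{-1}$ and $\bar B_0 = \frac{a\,\nabla^2 f(x_*)^{-1}}{1+D_0}$, since $\nabla^2 f(x_*)^{-1/2}(aI)\nabla^2 f(x_*)^{-1/2} = a\,\nabla^2 f(x_*)^{-1}$. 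Plugging these into the potential function $\Psi(A) = \Tr(A) - d - \log\Det(A)$ yields precisely $\Psi(\bar B_0) = \Delta_1$ as written in \eqref{Delta_1} and $\Psi(\tilde B_0) = \Delta_2$ as written in \eqref{Delta_2}.

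With these two identifications in hand, the conclusion is immediate: Theorem~\ref{thm:second_linear_rate} guarantees the rate \eqref{eq:second_linear_rate} for all $t \geq \Psi(\tilde B_0) + 3D_0\bigl(\Psi(\bar B_0) + \tfrac{3(1+D_0)^2}{\alpha(1-\beta)}\bigr)$, and substituting $\Psi(\tilde B_0) = \Delta_2$, $\Psi(\bar B_0) = \Delta_1$ turns this threshold into exactly $t \geq \Delta_2 + 3D_0\bigl(\Delta_1 + \tfrac{3(1+D_0)^2}{\alpha(1-\beta)}\bigr)$, which is the statement of the corollary. No new estimates about the BFGS dynamics, the line search, or self-concordance are needed; all of that machinery is already encapsulated in Theorem~\ref{thm:second_linear_rate}.

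There is essentially no obstacle here — the proof is a one-line substitution plus a bookkeeping check that the matrix algebra $\nabla^2 f(x_*)^{-1/2}(aI)\nabla^2 f(x_*)^{-1/2} = a\,\nabla^2 f(x_*)^{-1}$ is carried out correctly and that the $(1+D_0)$ scaling in $\bar B_0$ lands in the right place. If I wanted to be thorough I would also note that $B_0 = aI \in \mathbb{S}^d_{++}$ for $a>0$, so the hypotheses of Theorem~\ref{thm:second_linear_rate} are met. The only mild subtlety worth a sentence is confirming that $\Delta_1$ and $\Delta_2$ as defined in \eqref{Delta_1} and \eqref{Delta_2} are genuinely nonnegative (they are, being values of $\Psi$ on positive-definite matrices), so that the iteration threshold is well posed.
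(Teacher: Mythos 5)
Your proposal is correct and is exactly the approach the paper intends: the corollary is a direct substitution of $B_0 = aI$ into Theorem~\ref{thm:second_linear_rate}, using $\tilde B_0 = a\nabla^2 f(x_*)^{-1}$ and $\bar B_0 = a\nabla^2 f(x_*)^{-1}/(1+D_0)$ so that $\Psi(\tilde B_0)=\Delta_2$ and $\Psi(\bar B_0)=\Delta_1$. The paper states this explicitly for the analogous Corollary~\ref{coro:first_linear_rate} and the same remark applies verbatim here.
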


Note that both $\Delta_1$ and $\Delta_2$ are determined by the Hessian at the optimal solution $x_*$, while $\Delta_1$ also depends on the initial suboptimality error through $D_0$. In general, we do expect the convergence rates of BFGS to depend on the distance between $x_0$ and $x_*$, which is characterized by $D_0$ defined in \eqref{suboptimality} as well as the distance between the initial Hessian approximation matrix $B_0$ and the exact Hessian at optimal solution $x_*$, which is characterized by $\Delta_1$ and $\Delta_2$ when $B_0 = \alpha I$.


\section{Global Superlinear Convergence Rates}\label{sec:superlinear}

Building on the established linear convergence results, we next establish our global superlinear convergence rate of BFGS. A key point in our analysis is that to reach the superlinear convergence stage, the unit step size must be chosen after some iterations. This is a necessary condition, as noted in several prior works \cite{powell1971convergence,Powell,byrd1987global,QN_tool}. The fundamental methodology is to first establish the sufficient conditions of when the unit step size can be selected, i.e., when $\eta_t = 1$ satisfies the conditions in \eqref{sufficient_decrease} and \eqref{curvature_condition}. Then, based on these conditions, we can prove that after some specific iterations $t_0$, the unit step size $\eta_t = 1$ is admissible for the inexact line search scheme except for a finite number of iterations, which leads to the final proof of the global non-asymptotic superlinear convergence rate.

Next, we proceed to establish under what conditions $\eta=1$ is admissible. 
First, define $\rho_t$ as
\begin{equation}\label{definition_rho}
    \rho_t := \frac{-g_t^\top d_t}{\|\tilde{d}_t\|^2}, \quad \tilde{d}_t := \nabla^2{f(x_*)}^{\frac{1}{2}}d_t, \quad \forall t \geq 0.
\end{equation}
In the following lemma, we demonstrate that when $C_t = f(x_t) - f(x_*)$ is small enough and  $\rho_t$ is close enough to $1$, the unit step size $\eta_t=1$ is admissible and meets the Armijo-Wolfe conditions. 

\begin{lemma}\label{lem:unit_step_size}
    Suppose Assumption~\ref{ass_self_concodant} holds and define
    \begin{equation}\label{definition_delta_1_2_3}
    \begin{split}
        &\delta_1 := \min \bigg\{\frac{1}{16},  \frac{4}{M^2}\omega\left(\frac{1}{32}\right),  \frac{4}{M^2}\omega\left(\frac{\sqrt{2(1 - \alpha)} - 1}{2}\right)\!,  \frac{4}{M^2}\omega\left(\frac{1}{2}\left(\frac{1}{\sqrt{1 - \beta}} -\! 1\right)\right) \bigg\}, \\
        & \delta_2 := \max\left\{\frac{15}{16}, \ \frac{1}{\sqrt{2(1 - \alpha)}}\right\}, \ \delta_3 := \frac{1}{\sqrt{1 - \beta}},
    \end{split}
    \end{equation}
    which satisfy $0 < \delta_1 < \delta_2 < 1< \delta_3$.  If $C_t \leq \delta_1$ and $\delta_2 \leq \rho_t \leq \delta_3$, then $\eta_t = 1$ satisfies \eqref{sufficient_decrease} and \eqref{curvature_condition}.
\end{lemma}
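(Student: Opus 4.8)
The strategy is to rewrite both Armijo--Wolfe conditions at $\eta_t = 1$ as inequalities on $\int_0^1 d_t^\top \nabla^2 f(x_t + \sigma d_t) d_t\, d\sigma$, and then to control the Hessian along the segment $[x_t, x_t + d_t]$ by $\nabla^2 f(x_*)$ using strong self-concordance, exploiting that $C_t \le \delta_1$ keeps that whole segment inside a small ball around $x_*$. Since $\eta_t = 1$ makes $x_t + \sigma d_t = (1-\sigma) x_t + \sigma x_{t+1}$, and using $-g_t^\top d_t = \rho_t \|\tilde{d}_t\|^2$ from \eqref{definition_rho} together with the Taylor identities
\[
f(x_t + d_t) - f(x_t) - g_t^\top d_t = \int_0^1 (1-\sigma)\, d_t^\top \nabla^2 f(x_t + \sigma d_t) d_t\, d\sigma,
\]
\[
\nabla f(x_t + d_t)^\top d_t - g_t^\top d_t = \int_0^1 d_t^\top \nabla^2 f(x_t + \sigma d_t) d_t\, d\sigma,
\]
condition \eqref{sufficient_decrease} is equivalent to $\int_0^1 (1-\sigma)\, d_t^\top \nabla^2 f(x_t + \sigma d_t) d_t\, d\sigma \le (1-\alpha)\rho_t \|\tilde{d}_t\|^2$, and condition \eqref{curvature_condition} is equivalent to $\int_0^1 d_t^\top \nabla^2 f(x_t + \sigma d_t) d_t\, d\sigma \ge (1-\beta)\rho_t \|\tilde{d}_t\|^2$.

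The crucial ingredient is the two-sided Hessian bound
\[
\frac{1}{1+Mr}\, \nabla^2 f(x_*) \ \preceq\ \nabla^2 f(x_t + \sigma d_t) \ \preceq\ (1+Mr)\, \nabla^2 f(x_*), \qquad \sigma \in [0,1],
\]
where $r$ is any upper bound for $\max_{\sigma \in [0,1]} \|x_t + \sigma d_t - x_*\|_{x_*}$; both halves follow at once from Assumption~\ref{ass_self_concodant} by taking $z = x_*$ and $(x,y) = (x_t + \sigma d_t,\, x_*)$ in the two possible orders. Since $d_t^\top \nabla^2 f(x_*) d_t = \|\tilde{d}_t\|^2$, this sandwiches the first integral above by $\tfrac12 (1+Mr)\|\tilde{d}_t\|^2$ and the second below by $\tfrac{1}{1+Mr}\|\tilde{d}_t\|^2$, so it suffices to check $\tfrac12(1+Mr) \le (1-\alpha)\rho_t$ and $\tfrac{1}{1+Mr} \ge (1-\beta)\rho_t$. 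Using $\rho_t \ge \delta_2 \ge 1/\sqrt{2(1-\alpha)}$ gives $(1-\alpha)\rho_t \ge \tfrac12\sqrt{2(1-\alpha)}$, and using $\rho_t \le \delta_3 = 1/\sqrt{1-\beta}$ gives $(1-\beta)\rho_t \le \sqrt{1-\beta}$; so both conditions reduce to $Mr \le \min\{\sqrt{2(1-\alpha)} - 1,\ 1/\sqrt{1-\beta} - 1\}$. On the other hand, $C_t \le \delta_1$, the monotonicity of $\omega^{-1}$ (Lemma~\ref{lemma_omega}), and the last two terms in \eqref{definition_delta_1_2_3} give $D_t = 2\omega^{-1}(M^2 C_t/4) \le \min\{\sqrt{2(1-\alpha)} - 1,\ 1/\sqrt{1-\beta} - 1\}$. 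Hence it only remains to choose $r$ with $Mr \le D_t$.

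For the neighborhood bound, set $r_t := \|x_t - x_*\|_{x_*}$. A standard self-concordance suboptimality-to-distance estimate (recall $f$ is $M/2$-self-concordant) together with the definition of $D_t$ in \eqref{suboptimality} gives $M r_t \le D_t$, and $M r_t$ is genuinely small because the second term of $\delta_1$ forces $D_t \le 1/16$. To control $\|\tilde{d}_t\| = \|d_t\|_{x_*}$, apply Cauchy--Schwarz in the $\nabla^2 f(x_*)$ inner product to $-g_t^\top d_t = \rho_t \|\tilde{d}_t\|^2$, obtaining $\|\tilde{d}_t\| \le \|g_t\|_{x_*}^{\ast}/\rho_t \le \|g_t\|_{x_*}^{\ast}/\delta_2$, where $\|v\|_{x_*}^{\ast} := \sqrt{v^\top \nabla^2 f(x_*)^{-1} v}$; and $\|g_t\|_{x_*}^{\ast} = \|\nabla f(x_t) - \nabla f(x_*)\|_{x_*}^{\ast} \le (1 + \tfrac{M r_t}{2}) r_t$ by integrating the upper Hessian bound along $[x_*, x_t]$. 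With $\delta_2 \ge 15/16$ and $M r_t$ small this yields $\|\tilde{d}_t\| \le c\, r_t$ for an explicit constant $c$; since $x_t + \sigma d_t$ is a convex combination of $x_t$ and $x_{t+1} = x_t + d_t$, we may take $r = r_t + \|\tilde{d}_t\| \le (1+c) r_t$, and then $Mr \le (1+c) D_t$. Choosing the constant in the suboptimality-distance inequality (equivalently, shrinking the universal constants $1/16$, $1/32$, $15/16$ in $\delta_1, \delta_2$) so that $(1+c) D_t$ still lies below both thresholds above completes the argument.

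The main obstacle I expect is exactly this constant bookkeeping: the effective radius $r$ picks up a contribution from $\|d_t\|_{x_*}$ that is controlled only through the one-sided bound $\rho_t \ge \delta_2$, so one must verify carefully that $Mr$ still fits inside the $\alpha$- and $\beta$-dependent windows $(0, \sqrt{2(1-\alpha)} - 1]$ and $(0, 1/\sqrt{1-\beta} - 1]$ dictated by $\delta_1$. The universal constants $1/16$, $15/16$, $1/32$ appearing in $\delta_1$ and $\delta_2$ are precisely the slack built in to make this margin work; the other delicate point is establishing the self-concordant suboptimality-to-distance inequality with a clean enough constant (and, if necessary, sharpening the segment estimate via the convex-combination observation so that $r$ does not inflate by the full factor $1 + c$).
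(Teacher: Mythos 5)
Your high-level reduction is right --- Taylor, a two-sided Hessian sandwich on the segment $[x_t, x_t+d_t]$ controlled by strong self-concordance, then compare with the $\rho_t$-bounds --- but there is a genuine gap in how you bound the radius of that segment from $x_*$, and the gap is fatal with the specific constants stated in the lemma.

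You set $r = r_t + \|\tilde d_t\|$ and bound $\|\tilde d_t\|$ via Cauchy--Schwarz and $\rho_t \ge \delta_2$, arriving at $\|\tilde d_t\| \le c\,r_t$ and hence $Mr \le (1+c)D_t$. Working out your own bound, $c = (1+D_t)/\rho_t$; when $\alpha$ is close to $1/2$ one has $\delta_2 = 1/\sqrt{2(1-\alpha)}$ and $D_t \le \sqrt{2(1-\alpha)}-1$, so $c \approx 2(1-\alpha) \approx 1$ and $Mr$ can be nearly $2D_t$. But the Armijo check requires (in the tight case $\rho_t = \delta_2$) precisely $Mr \le \sqrt{2(1-\alpha)}-1$, which is exactly $D_t$'s ceiling --- the third entry in $\delta_1$ was chosen to make that bound hold with equality. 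So a factor-two inflation on $Mr$ breaks the proof, and ``shrinking the universal constants'' is not available since they are fixed in the statement. The ``convex-combination observation'' you flag as a possible fix is the right idea, but it only removes the inflation if you first know that $\|x_t + d_t - x_*\|_{x_*} \le D_t/M$; with triangle inequality alone you still inherit $\|\tilde d_t\|$.

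What the paper does instead is a bootstrap that closes exactly this hole. First it proves, as a separate lemma (Lemma~\ref{lemma_decrease}), that $f(x_t + d_t) \le f(x_t)$ whenever $C_t \le \min\{\frac{1}{16}, \frac{4}{M^2}\omega(\frac{1}{32})\}$ and $\rho_t \ge \frac{15}{16}$; these are precisely the two spare terms in $\delta_1$ and the $\frac{15}{16}$ term in $\delta_2$, which you treated as mysterious ``slack'' but whose role is to make this preliminary decrease go through. Once $f(x_t+d_t) \le f(x_t)$ is in hand, the suboptimality-to-distance inequality gives $\|x_t + d_t - x_*\|_{x_*} \le \frac{2}{M}\omega^{-1}\!\left(\frac{M^2}{4}C_{t+1}\right) \le D_t/M$ (monotonicity of $\omega^{-1}$), and the convex-combination bound then yields $\|x_t + \tau d_t - x_*\|_{x_*} \le D_t/M$ for every $\tau$, i.e. $Mr \le D_t$ with \emph{no} inflation (this is Lemma~\ref{lemma_Hessian}(d) in the paper). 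After that the Taylor comparison is exactly as you set up, and the last two terms of $\delta_1$ are then precisely tight against $\delta_2 \ge 1/\sqrt{2(1-\alpha)}$ and $\delta_3 = 1/\sqrt{1-\beta}$. In short: your proposal is missing the preliminary decrease lemma, and without it the segment radius picks up an irreducible factor that the stated constants cannot absorb.
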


First, we highlight the key difference between Lemma~\ref{lem:unit_step_size} and prior results in \cite{qiujiang2020quasinewton, qiujiang2024quasinewton, jin2024non}. The proof of Lemma~\ref{lem:unit_step_size} hinges on ensuring $f(x_t + d_t) \leq f(x_t)$, i.e., that a unit step yields a decrease in function value. Under Lipschitz continuity of the Hessian with constant $K$, the error of approximating $f(y)$ by its second-order Taylor expansion at $x$ is bounded by $\frac{K}{6}\|y - x\|^3$. Without this assumption, and under $M$-strongly self-concordant assumption, we instead use the bound
$f(y) \leq f(x) + g(x)^\top(y - x) + \frac{4}{M^2}\omega_{*}\left(\frac{M}{2}\|y - x\|_x\right)$
for $\|y - x\|_x < \frac{2}{M}$, where $\omega_{*}(x) = - x - \log(1 - x)$ is defined for $x < 1$. As a result, the error is no longer cubic in $\|y - x\|$, making it more challenging to ensure a function decrease. Nevertheless, we can still guarantee this property, with the main difference being that the error bound $\delta_1$ now depends on $\omega(x)$ defined above. See Lemma~\ref{lemma_decrease} and Section~\ref{proof_lem:unit_step_size} for details.

The result in Lemma~\ref{lem:unit_step_size} shows that when $C_t\leq\delta_1$ and $\rho_t \in [\delta_2, \delta_3]$, we can choose the step size $\eta_t = 1$ at iteration $t$ of BFGS, as it  satisfies the weak Wolfe conditions. 
Moreover, from the global non-asymptotic linear convergence rates of the last section, we can specify the $t_0$ such that for any $t \geq t_0$, the first condition $C_t \leq \delta_1$ always holds. Moreover, we can demonstrate that the second condition on $\rho_t$ is violated only for a finite number of iterations, i.e., the set of the indices that $\rho_t \notin [\delta_2, \delta_3]$ can be upper bounded by some constants. We formally present these results in the following lemma and the proofs are available in Appendix~\ref{proof_lem:bad_set}. 

\begin{lemma}\label{lem:bad_set}
    Suppose Assumptions~\ref{ass_self_concodant} holds and  $\{x_t\}_{t \geq 0}$ are generated by BFGS with step size satisfying the Armijo-Wolfe conditions in \eqref{sufficient_decrease}-\eqref{curvature_condition}. Recall the definition of $C_t$ in \eqref{suboptimality}, $D_t$ in \eqref{suboptimality}, $\Psi(\cdot)$ in \eqref{potential_function}, $\{\delta_i\}_{i = 1}^{3}$ in \eqref{definition_delta_1_2_3}, and $\bar{B}_0$, $\tilde{B}_0$ in \eqref{def_B_bar}. We have $C_t \leq \delta_1$ when $t \geq t_0$, where $t_0$ is defined as
    \begin{equation}\label{definition_t_0}
        t_0 := \max\left\{\Psi(\bar{B}_0), \ \frac{3(1 + D_0)^2}{\alpha(1 - \beta)}\log{\frac{C_0}{\delta_1}}\right\}.
    \end{equation}
    Moreover, the size of the set $I = \{t_0 \leq i \leq t - 1:\; \rho_t \notin [\delta_2,\delta_3]\}$ is at most 
    \begin{equation}\label{definition_delta_4}
        |I| \leq \delta_4\left(\Psi(\tilde{B}_{0}) + 2D_0\left(\Psi(\bar{B}_{0}) + \frac{3(1 + D_0)^2}{\alpha(1 - \beta)}\right)\right),\quad \!\!\text{where}\ \ \delta_4 := \frac{1}{\min\{\omega(\delta_2 - 1), \omega(\delta_3 - 1)\}}.
    \end{equation}
\end{lemma}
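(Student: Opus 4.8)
The plan is to split the claim into its two parts and handle them with the machinery built up in the linear-convergence section. For the first part, we need $C_t \le \delta_1$ once $t \ge t_0$. This should follow directly from Theorem~\ref{thm:first_linear_rate}: the bound \eqref{eq:first_linear_rate_2} gives, once $t \ge \Psi(\bar B_0)$, that $C_t \le C_0 (1 - \tfrac{\alpha(1-\beta)}{3(1+D_0)^2})^t \le C_0 \exp(-\tfrac{\alpha(1-\beta)}{3(1+D_0)^2}\, t)$, using $1 - u \le e^{-u}$. Forcing the right-hand side below $\delta_1$ requires $t \ge \tfrac{3(1+D_0)^2}{\alpha(1-\beta)} \log(C_0/\delta_1)$, and combining this with the requirement $t \ge \Psi(\bar B_0)$ coming from when \eqref{eq:first_linear_rate_2} is valid yields exactly the $\max$ in \eqref{definition_t_0}. (If $C_0 \le \delta_1$ already, the log term is nonpositive and there is nothing to prove beyond $t\ge\Psi(\bar B_0)$.)

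For the second part, the key is to control how often $\rho_t \notin [\delta_2,\delta_3]$, equivalently how often $\omega(\rho_t - 1) \ge \min\{\omega(\delta_2 - 1), \omega(\delta_3 - 1)\} = 1/\delta_4$ (here I use that $\omega$ is decreasing on $(-1,0]$ and increasing on $[0,\infty)$, so $\rho_t \le \delta_2$ or $\rho_t \ge \delta_3$ forces $\omega(\rho_t-1)$ to be at least the smaller of the two endpoint values). So it suffices to show $\sum_{t = t_0}^{T-1} \omega(\rho_t - 1)$ is bounded by the quantity in the parenthesis of \eqref{definition_delta_4}, and then $|I| \le \delta_4 \sum_{t\in I}\omega(\rho_t-1) \le \delta_4 \sum_{t=t_0}^{T-1}\omega(\rho_t-1)$. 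The natural route is the standard quasi-Newton potential telescoping: working in the $P = \nabla^2 f(x_*)$-weighted coordinates, one tracks $\Psi(\hat B_t)$ and shows an inequality of the schematic form $\Psi(\hat B_{t+1}) \le \Psi(\hat B_t) - \omega(\rho_t - 1) + (\text{correction terms controlled by strong self-concordance})$. Summing telescopes the $\Psi$ terms into $\Psi(\tilde B_0)$ (the $P = \nabla^2 f(x_*)$ weighting gives $\hat B_0 = \tilde B_0$), while the correction terms — which arise because the weighted secant pair $(\hat s_t, \hat y_t)$ only approximately satisfies the "unit curvature" relation, the discrepancy being governed by $M\|x_t - x_*\|$ type quantities, i.e. by $D_t$ — accumulate to something like $2D_0(\Psi(\bar B_0) + \tfrac{3(1+D_0)^2}{\alpha(1-\beta)})$. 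The factor $\Psi(\bar B_0) + \tfrac{3(1+D_0)^2}{\alpha(1-\beta)}$ is recognizable as (a bound on) $\sum_{t\ge 0} C_t / C_0$ or $\sum_t D_t$-type series coming from the linear rate \eqref{eq:first_linear_rate} and \eqref{eq:first_linear_rate_2}, so the correction bookkeeping reduces to summing a geometric series of suboptimalities.

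The main obstacle is establishing the per-step potential inequality $\Psi(\hat B_{t+1}) \le \Psi(\hat B_t) - \omega(\rho_t - 1) + \varepsilon_t$ with the right form of $\varepsilon_t$ and then proving $\sum_t \varepsilon_t$ is dominated by $2D_0(\Psi(\bar B_0) + \tfrac{3(1+D_0)^2}{\alpha(1-\beta)})$. This requires (i) the classical computation of the change in $\Psi$ under a BFGS update in terms of the quantity $\rho_t$ and the curvature ratio $\hat y_t^\top \hat y_t / \hat s_t^\top \hat y_t$, and (ii) a self-concordance estimate comparing $\nabla^2 f(x_t)$, $\nabla^2 f(x_{t+1})$ and $\nabla^2 f(x_*)$ along the segment, so that the weighted secant quantities are pinched within multiplicative factors $1 \pm O(M\|x_t-x_*\|_{x_*})$ of their "ideal" values — and then converting $M\|x_t - x_*\|_{x_*}$ into $D_t$ via the relation $C_t \le \tfrac12\|x_t-x_*\|_{x_*}^2$-type bounds and the definition $D_t = 2\omega^{-1}(M^2 C_t/4)$. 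I expect the details of (ii) and the summation of the resulting error series (using the linear rate to bound $\sum_t D_t$) to be where the real work lies; the telescoping and the final $\delta_4$ conversion are routine once those pieces are in place.
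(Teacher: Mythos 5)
Your proposal follows essentially the same route as the paper: the first claim is obtained from the simplified linear rate \eqref{eq:first_linear_rate_2} plus $1-u\le e^{-u}$, and the second from lower-bounding $\omega(\rho_i-1)$ by $1/\delta_4$ on the bad set, upper-bounding $\sum_i\omega(\rho_i-1)$ by $\Psi(\tilde B_0)+2\sum_i D_i$, and then bounding $\sum_i D_i$ by $D_0(\Psi(\bar B_0)+\tfrac{3(1+D_0)^2}{\alpha(1-\beta)})$ via the linear rate. The one ingredient you outline but do not nail down --- the potential inequality $\sum_{i=0}^{t-1}\omega(\rho_i-1)\le\Psi(\tilde B_0)+2\sum_{i=0}^{t-1}D_i$ --- is exactly Proposition~\ref{lemma_BFGS_superlinear} in the appendix, which the paper imports from \cite{qiujiang2024quasinewton} rather than re-deriving; everything else you describe matches the paper's argument step for step, including the $\sum D_i$ bound from \eqref{sum_of_C_t}.
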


The above lemma specifies the time instance $t_0$ for which $C_t \leq \delta_1$ is satisfied for any $t \geq t_0$ and for only a finite number of indices, the condition $\rho_t \in [\delta_2, \delta_3]$ does not hold. In practice, we always start with the unit step size when we implement the inexact line search scheme at iteration $t$ to check if $\eta_t = 1$ satisfies the Armijo-Wolfe conditions in \eqref{sufficient_decrease} and \eqref{curvature_condition}. Hence, when $t \geq t_0$, only for a finite number of iterations that $\rho_t \notin [\delta_2, \delta_3]$, the unit step size is not selected. With all these points, we present the global superlinear convergence rate of BFGS for self-concordant functions. 

\begin{theorem}\label{thm:superlinear_rate_2}
    Suppose Assumptions~\ref{ass_self_concodant} holds and the iterates $\{x_t\}_{t \geq 0}$ are generated by BFGS with step size satisfying the Armijo-Wolfe conditions in \eqref{sufficient_decrease} and \eqref{curvature_condition}. Recall the definition of $D_t$ in \eqref{suboptimality}, $\Psi(\cdot)$ in \eqref{potential_function}, $\bar{B}_0$, $\tilde{B}_0$ in \eqref{def_B_bar}, and $\{\delta_i\}_{i = 1}^{4}$ in \eqref{definition_delta_1_2_3}, \eqref{definition_delta_4}. Then, for any initial point $x_0 \in \mathbb{R}^{d}$ and any initial Hessian approximation matrix $B_0 \in \mathbb{S}^d_{++}$, the following global superlinear result holds:
    \begin{equation*}
    \begin{split}
        \frac{f(x_t) - f(x_*)}{f(x_0) - f(x_*)} \leq  \left(\frac{\delta_6 t_0 + \delta_7 \Psi(\tilde{B}_0) + \delta_8 D_0 (\Psi(\bar{B}_{0}) + \frac{3(1 + D_0)^2}{\alpha(1 - \beta)})}{t}\right)^{t},
    \end{split}
    \end{equation*}
    where $t_0$ is defined in \eqref{definition_t_0}, $\{\delta_i\}_{i = 5}^{8}$ defined below only depend on line search parameters $\alpha$ and $\beta$,
    \begin{equation}\label{def_delta_5678}
    \begin{split}
        & \delta_5 := \max\left\{\frac{2 + ({2}/{\delta_2})}{2\delta_2 - 17/16}, \ \frac{4\delta_3}{2\delta_2 - 17/16}\right\}, \qquad \delta_6 := \log{\frac{1}{2\alpha(1 - \beta)}},\\
        &  \qquad \delta_7 := 1 + \delta_4\delta_6 + \delta_5, \qquad \delta_8 := 2 + 2\delta_4\delta_6 + 2\delta_5 + \frac{2\delta_2 - 1/16 - \log{\delta_2}}{2\delta_2 - 17/16}.
    \end{split}
    \end{equation}
\end{theorem}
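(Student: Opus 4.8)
The plan is to combine three ingredients already assembled in the excerpt: (i) the linear convergence rates of Section \ref{sec:linear}, which guarantee $C_t \le \delta_1$ for all $t \ge t_0$ (Lemma \ref{lem:bad_set}); (ii) the admissibility criterion for the unit step size, namely that $\eta_t = 1$ is accepted whenever $C_t \le \delta_1$ and $\rho_t \in [\delta_2, \delta_3]$ (Lemma \ref{lem:unit_step_size}); and (iii) the bound $|I| \le \delta_4(\Psi(\tilde B_0) + 2D_0(\Psi(\bar B_0) + 3(1+D_0)^2/(\alpha(1-\beta))))$ on the number of "bad" iterations in $[t_0, t-1]$ on which $\rho_t$ escapes $[\delta_2,\delta_3]$ (Lemma \ref{lem:bad_set}). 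The strategy is a telescoping/amortization argument: on the "good" iterations (those with $t \ge t_0$ and $\rho_t \in [\delta_2,\delta_3]$) the unit-step BFGS step contracts $C_t$ superlinearly, while on the few "bad" iterations and on the initial segment $t < t_0$ we only use the weak-Wolfe guaranteed decrease from the line search; adding up the per-step contraction exponents and dividing by $t$ yields the claimed $(\cdot/t)^t$ rate.

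First I would set up the potential-function machinery. Using the reweighting matrix $P = \nabla^2 f(x_*)$ (so that $\hat B_t = \nabla^2 f(x_*)^{-1/2} B_t \nabla^2 f(x_*)^{-1/2}$), I expect the BFGS trace/log-det potential $\Psi(\hat B_t)$ to satisfy a one-step inequality of the form $\Psi(\hat B_{t+1}) \le \Psi(\hat B_t) + (\text{strong self-concordance error term}) - (\text{progress term in } \rho_t \text{ and the step direction})$. Summing this telescoping relation over $t_0 \le i \le t-1$, the accumulated error term is controlled by $D_0$ times the linear-rate quantities (this is where the $2D_0(\Psi(\bar B_0) + 3(1+D_0)^2/(\alpha(1-\beta)))$ factor enters, matching $\delta_8$), and the accumulated progress term is what drives superlinear convergence. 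On a good iteration with $\rho_t \in [\delta_2,\delta_3]$ and $\eta_t = 1$, one converts the progress term into a bound like $C_{t+1} \le (\text{const} \cdot C_t) \cdot (\text{something summing to a bounded total})$, so that $\prod_{\text{good }i} (C_{i+1}/C_i)$ telescopes against the potential. The exact bookkeeping produces the constants $\delta_5, \delta_6 = \log(1/(2\alpha(1-\beta)))$, $\delta_7, \delta_8$ in \eqref{def_delta_5678}: $\delta_6$ is the per-iteration log-decrease factor $\log(1/(2\alpha(1-\beta)))$ coming from the weak-Wolfe sufficient-decrease bound used on the non-superlinear iterations, $\delta_4\delta_6$ accounts for the bad iterations, $\delta_5$ absorbs the worst-case behavior of $\rho_t$ near the boundary $\delta_2$, and the extra additive terms come from converting $\Psi$-bounds through the $\omega$/$\omega^{-1}$ estimates.

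Concretely, I would (1) split $[0,t)$ into $[0,t_0)$, the bad set $I \subseteq [t_0,t)$, and the good set $G = [t_0,t) \setminus I$; (2) on $[0,t_0) \cup I$ bound $\log(C_{i+1}/C_i) \le -\log(1/(2\alpha(1-\beta))) \cdot(\text{something})$ or simply $C_{i+1} \le C_i$ via weak Wolfe, giving a total contribution of order $\delta_6(t_0 + |I|)$; (3) on $G$, use the unit-step superlinear one-step bound together with the telescoped potential inequality to show $\sum_{i \in G}\log(C_i/C_{i+1}) \gtrsim |G|\log(|G|/(\text{accumulated error} + \Psi(\tilde B_0)))$, i.e. the classical $(A/t)^t$-type estimate where $A$ collects $\Psi(\tilde B_0)$, the $D_0$-weighted terms, and $t_0$; (4) combine, using $|G| \ge t - t_0 - |I|$ and $|I| \le \delta_4(\cdots)$, and simplify with the inequality $\prod (a_i/t) \le ((\sum a_i)/t)^t$-style convexity argument to collapse everything into a single numerator $\delta_6 t_0 + \delta_7 \Psi(\tilde B_0) + \delta_8 D_0(\Psi(\bar B_0) + 3(1+D_0)^2/(\alpha(1-\beta)))$ over $t$. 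The main obstacle I anticipate is step (3): making the per-step superlinear contraction on good iterations explicit without the Hessian-Lipschitz assumption requires carefully tracking how the strong self-concordance error $M\|x_{i+1}-x_i\|_{x_*}$ feeds back into $\rho_i \to 1$ and into the potential decrease, and ensuring the resulting geometric-type sums remain bounded by the linear-rate quantities rather than blowing up — this is the technical heart where the affine-invariant self-concordant bounds must replace the usual smoothness-based estimates, and getting the constants to line up exactly with $\delta_5,\dots,\delta_8$ is the delicate part.
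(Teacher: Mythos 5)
Your high-level plan matches the paper's: partition $[0,t)$ into the initial segment $[0,t_0)$, the bad set $I$, and the good set $\bar I$; use the crude $\alpha(1-\beta)$ weak-Wolfe bound on $[0,t_0)\cup I$; exploit unit-step admissibility on $\bar I$; and control the cumulative strong-self-concordance error via the linear rate. The constants $\delta_5,\dots,\delta_8$ you attribute to roughly the right sources. However, there is a substantive gap in how the $(\cdot/t)^t$ rate is actually produced, and I don't think your step~(3) can be carried out as written.

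The missing linchpin is Proposition~\ref{lemma_bound}, which gives
\begin{equation*}
\frac{f(x_t)-f(x_*)}{f(x_0)-f(x_*)} \;\le\; \Big(1 - \big(\textstyle\prod_{i=0}^{t-1}\hat p_i\hat q_i\hat n_i\tfrac{\cos^2\hat\theta_i}{\hat m_i}\big)^{1/t}\Big)^{t}.
\end{equation*}
All the pieces you assemble — the potential one-step inequality \eqref{eq:potential_decrease}, the bad-set bound, the unit-step bounds — plug into this single product. Superlinearity does \emph{not} arise from a per-iteration contraction on good iterations; it arises because the full product is bounded below by a time-independent constant $e^{-A}$, so that $\big(1-e^{-A/t}\big)^t\le (A/t)^t$. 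Your attempt in step~(3) to show $\sum_{i\in G}\log(C_i/C_{i+1})\gtrsim |G|\log(|G|/\cdot)$ tries to distribute the superlinear gain over individual good iterations, but you have no per-iteration guarantee that $C_{i+1}/C_i$ is small — only an amortized guarantee via the potential, and that amortization is exactly what Proposition~\ref{lemma_bound} packages. Without that proposition (or something equivalent), the telescoping of $C_{i+1}/C_i$ you propose gets stuck.

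Two further mechanisms you need and don't identify. First, a precise $2^t$-vs-$2^{-t}$ cancellation: the analysis uses Lemma~\ref{lemma_eigenvalue} (two-sided bound giving $\hat q_i\ge 2/(1+D_i)^2$, hence $\prod\hat q_i\ge 2^t e^{-2\sum D_i}$) rather than the weaker Lemma~\ref{lemma_g}, precisely so the factor $2^t$ cancels the $2^{-t}$ that inevitably appears from $\prod\hat p_i\hat n_i$: even on ideal good iterations $\hat p_i\hat n_i\approx\tfrac12$, not $1$. Second, on $\bar I$ you need the explicit quantitative bounds from Lemma~\ref{lem:lower_bound}, $\hat p_i\ge 1-\tfrac{1+D_i}{2\rho_i}$ and $\hat n_i\ge\tfrac{1}{(1+D_i)\rho_i}$, whose logarithmic deviation from $\log\tfrac12$ is then controlled by $\omega(\rho_i-1)$ (via Lemma~\ref{lemma_omega} (b)--(c)) plus a $D_i$ term; summing uses Proposition~\ref{lemma_BFGS_superlinear} and \eqref{sum_of_C_t}. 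The phrase ``converts the progress term into a bound like $C_{t+1}\le(\text{const}\cdot C_t)\cdot(\text{something summing to a bounded total})$'' gestures at this, but without Proposition~\ref{lemma_bound} and the $2^t$ cancellation the bookkeeping will not close.
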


Theorem~\ref{thm:superlinear_rate_2} shows that the superlinear convergence rate of BFGS for a self-concordant function is of the form $({C}/{t})^{t}$ for some constant $C > 0$. Notice that from the definition of $t_0$ in \eqref{definition_t_0}, we know that $t_0 = \mathcal{O}(\Psi(\bar{B}_{0}) + (1 + D_0)^2\log{D_0})$. Hence, the superlinear convergence rate is of the order
$
    \mathcal{O}((\frac{\Psi(\tilde{B}_0) + D_0 (\Psi(\bar{B}_{0}) + (1 + D_0)^2)}{t})^t)$,
and we reach the superlinear convergence stage when $
    t \geq \Omega(\Psi(\tilde{B}_0) + D_0 (\Psi(\bar{B}_{0}) + (1 + D_0)^2))$,
which depends on the initial suboptimality $D_0$ and the initial Hessian approximation matrix $B_0$. To our knowledge, this is the first non-asymptotic global superlinear convergence rate of a quasi-Newton method without the assumption of strong convexity. Moreover, the superlinear rate in Theorem~\ref{thm:superlinear_rate_2} is independent of the linear system chosen for the variables, and, hence, it is consistent with the affine invariance property of BFGS. Next, we present the superlinear convergence rate of BFGS for the special case of $B_0 = a I$, where $a > 0$.

\begin{corollary}\label{coro:superlinear_rate}
Suppose Assumptions~\ref{ass_self_concodant} holds, $\{x_t\}_{t\geq 0}$  are generated by BFGS with step size satisfying the Armijo-Wolfe conditions in \eqref{sufficient_decrease} and \eqref{curvature_condition}, and $x_0 \in \mathbb{R}^{d}$ is an arbitrary initial point. If the initial Hessian approximation matrix is $B_0 = a I$ where $a > 0$, the following result holds:
\begin{equation*}
 \frac{f(x_t) \!-\! f(x_*)}{f(x_0)\! -\! f(x_*)} 
     \!\leq \!\left(\!\frac{\delta_6 t_0 \!+\! \delta_7 \Delta_2 \!+\! \delta_8 D_0 (\Delta_1 \!+\! \frac{3(1 + D_0)^2}{\alpha(1 - \beta)})}{t}\!\right)^{\!\!t}\!,
\end{equation*}
where $t_0$ is defined in \eqref{definition_t_0}, $\{\delta_i\}_{i = 5}^{8}$ are defined in \eqref{def_delta_5678} and $\Delta_1$, $\Delta_2$ are defined in \eqref{Delta_1}, \eqref{Delta_2}.
\end{corollary}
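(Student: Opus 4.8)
The plan is to obtain this corollary as an immediate specialization of Theorem~\ref{thm:superlinear_rate_2} to the choice $B_0 = aI$; the only task is to rewrite the weighted initializations $\tilde{B}_0$, $\bar{B}_0$ and the quantity $t_0$ under this choice and check that the constants $\{\delta_i\}_{i=5}^{8}$ are unaffected.

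First I would substitute $B_0 = aI$ into the definitions in \eqref{def_B_bar}. Since $aI$ commutes with $\nabla^2 f(x_*)^{-\frac12}$, the congruence collapses to a scalar multiple of the inverse Hessian:
\[
\tilde{B}_0 = \nabla^2 f(x_*)^{-\frac12}(aI)\nabla^2 f(x_*)^{-\frac12} = a\,\nabla^2 f(x_*)^{-1}, \qquad \bar{B}_0 = \frac{a\,\nabla^2 f(x_*)^{-1}}{1 + D_0}.
\]
Applying the potential $\Psi$ from \eqref{potential_function} then gives $\Psi(\tilde{B}_0) = \Tr(a\nabla^2 f(x_*)^{-1}) - d - \log\Det(a\nabla^2 f(x_*)^{-1}) = \Delta_2$ and, identically, $\Psi(\bar{B}_0) = \Delta_1$, matching \eqref{Delta_1}--\eqref{Delta_2}. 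Because $x_0$, $\alpha$, $\beta$, and $M$ — and hence $C_0 = f(x_0) - f(x_*)$, $D_0$, and $\delta_1$ — do not depend on $B_0$, the quantity $t_0$ of \eqref{definition_t_0} is unchanged, now read as $t_0 = \max\{\Delta_1,\ \frac{3(1+D_0)^2}{\alpha(1-\beta)}\log(C_0/\delta_1)\}$. Likewise the constants $\{\delta_i\}_{i=5}^{8}$ in \eqref{def_delta_5678} depend only on $\alpha,\beta$ (through $\delta_2,\delta_3,\delta_4,\delta_6$) and are unaffected. Plugging $\Psi(\tilde{B}_0) = \Delta_2$ and $\Psi(\bar{B}_0) = \Delta_1$ into the bound of Theorem~\ref{thm:superlinear_rate_2} yields exactly the claimed inequality.

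There is essentially no obstacle here: the entire content is that a scalar matrix passes through the congruence by $\nabla^2 f(x_*)^{-\frac12}$, so the abstract potentials $\Psi(\tilde{B}_0)$ and $\Psi(\bar{B}_0)$ become the explicit trace/log-determinant expressions $\Delta_2$ and $\Delta_1$. The only point worth a line is to note that $a\nabla^2 f(x_*)^{-1}$ and its scaling by $(1+D_0)^{-1}$ lie in $\mathbb{S}^d_{++}$ — which holds since $f$ is strictly convex (so $\nabla^2 f(x_*)\succ 0$), $a>0$, and $D_0\ge 0$ — so that $\Psi$ is well-defined on them and Theorem~\ref{thm:superlinear_rate_2} applies verbatim.
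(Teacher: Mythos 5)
Your proof is correct and matches the paper's approach exactly: the paper treats this corollary as an immediate specialization of Theorem~\ref{thm:superlinear_rate_2} to $B_0 = aI$, under which $\Psi(\tilde{B}_0) = \Delta_2$ and $\Psi(\bar{B}_0) = \Delta_1$ while the constants $\{\delta_i\}_{i=5}^8$ remain untouched. The one phrasing to tighten is ``$t_0$ is unchanged'' --- $t_0$ does depend on $B_0$ through $\Psi(\bar{B}_0)$; what you mean (and then correctly write) is that after the substitution $\Psi(\bar{B}_0)$ becomes $\Delta_1$, giving $t_0 = \max\{\Delta_1,\ \frac{3(1+D_0)^2}{\alpha(1-\beta)}\log(C_0/\delta_1)\}$.
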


\section{Complexity Analysis}\label{sec:complexity}

\noindent\textbf{Iteration Complexity.} Using Theorems~\ref{thm:first_linear_rate}, \ref{thm:second_linear_rate}, and \ref{thm:superlinear_rate_2}, we characterize the global iteration complexity of BFGS with inexact line search on self-concordant functions.
 These three results provide upper bounds, and the smallest of these bounds determines the complexity of BFGS. The smallest bound depends on the required accuracy relative to the problem and algorithm parameters. Specifically, for any initial point $x_0 \in \mathbb{R}^d$ and initial Hessian approximation matrix $B_0 \in \mathbb{S}^d_{++}$, to achieve a function value accuracy of $\epsilon > 0$, i.e., $f(x_T) - f(x_*) \leq \epsilon$, the number of iterations required, as per Theorem~\ref{thm:first_linear_rate}, is at most $
    T_1=\mathcal{O}\left(\Psi(\bar{B_0}) + (1 + D_0)^2\log{\frac{1}{\epsilon}}\right) $. 
The result in Theorem \ref{thm:superlinear_rate_2} eliminates the multiplicative factor in the $\log(1/\epsilon)$ term but requires a possibly larger additive constant, resulting in a complexity of $
T_2=\mathcal{O}(\Psi(\tilde{B_0}) + (\Psi(\bar{B_0}) + (1 + D_0)^2)D_0 + \log{\frac{1}{\epsilon}})$
Indeed, $T_2$ is smaller than $T_1$ when $\epsilon$ is small and $\log{\frac{1}{\epsilon}}$ becomes the dominant term. When $\epsilon$ is very small, the superlinear bound from Theorem~\ref{thm:superlinear_rate_2} provides the best complexity, which is
$
T_3=\mathcal{O}\!\!\left(\nicefrac{(\log{\frac{1}{\epsilon}})}{\log\!{\left(\frac{1}{2} \!+\! \sqrt{\frac{1}{4} \!+\! \frac{1}{\Psi(\tilde{B_0}) + (\Psi(\bar{B_0}) + (1 + D_0)^2)D_0}\!\log{\frac{1}{\epsilon}}}\right)}}\! \!\right).
$
Given these three bounds the overall iteration complexity of BFGS for the considered setting is 
$T= \min\{T_1,T_2,T_3\}$.
Note that, for the special case of $B_0 = a I$ where $a > 0$ is an arbitrary positive constant, the complexity bounds denoted by $T_1,T_2,T_3$ can be further simplified as 
\begin{equation*}
\begin{split}
    T_1= \mathcal{O}\!\left( \Delta_1 \!+\! (1 \!+\! D_0)^2\log{\frac{1}{\epsilon}}\right),\ T_2=\mathcal{O}\!\left( C_1 \!+\! \log{\frac{1}{\epsilon}}\right),\  T_3 = \mathcal{O}\!\left( \frac{\log{\frac{1}{\epsilon}}}{\log{\left(\frac{1}{2} \!+\! \sqrt{\frac{1}{4} \!+\! \frac{1}{C_1}\log{\frac{1}{\epsilon}}}\right)}}\right)\!,
\end{split}
\end{equation*}
where $\Delta_1,\Delta_2$ are defined in \eqref{Delta_1}, \eqref{Delta_2}, and $C_1:= \Delta_2 + (\Delta_1 + (1 + D_0)^2)D_0$. For full iteration complexity details, see Appendix~\ref{proof_of_complexity}.

\noindent\textbf{Line Search Complexity.}
While the previous section characterized the  complexity of BFGS under Assumption~\ref{ass_self_concodant}, analyzing its gradient complexity requires determining the number of gradient queries needed per iteration to obtain an admissible step size. In \cite{qiujiang2024quasinewton}, the authors proposed an efficient log-bisection approach for step size selection in BFGS, satisfying the line search conditions in \eqref{sufficient_decrease} and \eqref{curvature_condition}, and provided a complexity analysis. However, their results apply only to strongly convex functions with Lipschitz-continuous gradients and Hessians. In this section, we examine the line-search complexity of the log-bisection approach from \cite{qiujiang2024quasinewton} when the objective function is strictly convex and strongly self-concordant. Let $\Lambda_t$ denote the average number of iterations in Algorithm~\ref{algo_wolfe} required to terminate after $t$ iterations. The following proposition provides an upper bound for $\Lambda_t$.

\begin{proposition}\label{prop:line_search}
Suppose Assumptions~\ref{ass_self_concodant} holds. Let $\{x_t\}_{t\geq 0}$ be generated by BFGS  with step size satisfying the Armijo-Wolfe conditions in \eqref{sufficient_decrease} and \eqref{curvature_condition} and is chosen by Algorithm~\ref{algo_wolfe}. Let $\Lambda_t$ be the average number of the function value and gradient evaluations per iteration in Algorithm~\ref{algo_wolfe} after $t$ iterations. For any initial point $x_0 \!\in\! \mathbb{R}^{d}$ and initial Hessian approximation $B_0\! \in\! \mathbb{S}^d_{++}$,  we have that
\begin{equation*}
    \Lambda_t = \mathcal{O}\left( 1 + \log\left(1 + \frac{\Gamma}{t}\right) + \log\left(1 + \log( 1 + \frac{\Psi(\tilde{B}_{0}) + \Gamma}{t})\right)\right),
\end{equation*}
where $\Gamma = \mathcal{O}(D_0(\Psi(\bar{B}_{0}) + (1 + D_0)^2))$. As a corollary, for the special case of $B_0 = a I$ where $a > 0$, we have $\Lambda_t = \mathcal{O}( 1 + \log(1 + \frac{\tilde{\Gamma}}{t}) + \log(1 + \log( 1 + \frac{\Delta_2 + \tilde{\Gamma}}{t})))$, where $\tilde{\Gamma} = \mathcal{O}\left(D_0(\Delta_1 + (1 + D_0)^2)\right)$. 
\end{proposition}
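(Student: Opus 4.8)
The plan is to invoke the per-iteration guarantee of the log-bisection line search of \cite{qiujiang2024quasinewton} recalled in Algorithm~\ref{algo_wolfe}, sum it over the first $t$ iterations, and turn the sum into the stated average via Jensen's inequality. At outer iteration $i$, Algorithm~\ref{algo_wolfe} starts from the trial step $\eta=1$, runs a bracketing phase that multiplies or divides the trial step by a fixed factor until it traps a step obeying the sufficient-decrease inequality \eqref{sufficient_decrease} while failing it just outside, and then a refinement phase that bisects the trapped bracket on a logarithmic scale until both \eqref{sufficient_decrease} and the curvature condition \eqref{curvature_condition} hold; hence the number $N_i$ of function/gradient evaluations at iteration $i$ obeys $N_i = O(1 + |\log\bar\eta_i| + \log(1/w_i))$, where $\bar\eta_i$ is the admissible step returned by bracketing and $w_i$ is the logarithmic length of the admissible set of step sizes for the one-dimensional restriction $\eta\mapsto f(x_i+\eta d_i)$. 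Since $\Lambda_t=\tfrac1t\sum_{i=0}^{t-1}N_i$, the whole proof reduces to (a) per-iteration bounds $|\log\bar\eta_i|=O(\log(1+a_i))$ and $\log(1/w_i)=O(\log\log(e+b_i))$ for explicit nonnegative scalars $a_i,b_i$, and (b) aggregate bounds $\sum_{i<t}a_i=O(\Gamma)$ and $\sum_{i<t}b_i=O(\Psi(\tilde B_0)+\Gamma)$. Granting (a) and (b), the proposition follows from the concavity of $x\mapsto\log(1+x)$ and $x\mapsto\log\log(e+x)$ on $[0,\infty)$, giving $\sum_{i<t}\log(1+a_i)\le t\log(1+\tfrac1t\sum_{i<t}a_i)$ and $\sum_{i<t}\log\log(e+b_i)\le t\log\log(e+\tfrac1t\sum_{i<t}b_i)$.

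For part (a) I would reuse the one-dimensional self-concordance estimates behind Lemma~\ref{lem:unit_step_size}. The restriction $\eta\mapsto f(x_i+\eta d_i)$ is strongly self-concordant, so the upper model $f(y)\le f(x_i)+g_i^\top(y-x_i)+\tfrac{4}{M^2}\omega_{*}(\tfrac M2\|y-x_i\|_{x_i})$ (for $\|y-x_i\|_{x_i}<2/M$), the curvature control of the reweighted secant pair under $P=(1+D_0)\nabla^2 f(x_*)$, and the time-dependent lower bound on the gradient ratio from Lemma~\ref{lemma_g} together confine the admissible step sizes to an interval whose endpoints deviate from $1$ by a factor governed by $\rho_i$ and $C_i$ and whose logarithmic width is bounded below by an absolute constant once $C_i\le\delta_1$ and $\rho_i\in[\delta_2,\delta_3]$ --- precisely the regime where Lemma~\ref{lem:unit_step_size} already certifies $\eta_i=1$. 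Off that regime one may take $a_i=O(\omega(|\rho_i-1|)+\mathbf{1}\{i<t_0\}\log(C_0/\delta_1))$ and a $b_i$ of the same flavor but only logarithmically large; both are $O(1)$ whenever Lemma~\ref{lem:unit_step_size} applies.

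Part (b) is the crux. Taking $P=\nabla^2 f(x_*)$ and writing $\hat B_i$ for the reweighted approximations, the BFGS potential identity for $\Psi(\cdot)$ gives, iteration by iteration, $\Psi(\hat B_{i+1})\le\Psi(\hat B_i)-q_i+r_i$, where the ``cosine'' defect $q_i$ lower-bounds $c\,\omega(|\rho_i-1|)$ for an absolute $c>0$, and $r_i$ collects the mismatch between the true secant pair $(s_i,y_i)$ and the $\nabla^2 f(x_*)$-pair, which strong self-concordance bounds by $O(M\|s_i\|_{x_*})$ up to factors of $D_i$ and which disappears when $\eta_i=1$ and $\nabla^2 f$ is constant. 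Summing over $i=0,\dots,t-1$ and using $\Psi(\hat B_i)\ge 0$ with $\Psi(\hat B_0)=\Psi(\tilde B_0)$ yields $\sum_{i<t}\omega(|\rho_i-1|)=O(\Psi(\tilde B_0)+\sum_{i<t}r_i)$, while the linear decrease of $C_i$ guaranteed by Theorems~\ref{thm:first_linear_rate} and \ref{thm:second_linear_rate} makes $\sum_{i<t}M\|s_i\|_{x_*}$ a convergent series of order $O(D_0(\Psi(\bar B_0)+(1+D_0)^2/(\alpha(1-\beta))))$, matching the bound on $|I|$ in Lemma~\ref{lem:bad_set}. Adding the contribution of the warm-up iterates $i<t_0$, which the definition \eqref{definition_t_0} of $t_0$ is designed so as to absorb into the stated constants, gives $\sum_{i<t}a_i=O(\Gamma)$ and $\sum_{i<t}b_i=O(\Psi(\tilde B_0)+\Gamma)$. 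The corollary for $B_0=aI$ is immediate once one substitutes $\Psi(\bar B_0)=\Delta_1$ and $\Psi(\tilde B_0)=\Delta_2$ from \eqref{Delta_1} and \eqref{Delta_2}.

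\textbf{Main obstacle.} The delicate step is part (b): upgrading the qualitative ``only finitely many bad iterations'' statement of Lemma~\ref{lem:bad_set} to a quantitative control of $\sum_{i<t}\omega(|\rho_i-1|)$ that remains $O(\Gamma)$. This requires a careful accounting of the potential drop $\Psi(\hat B_i)-\Psi(\hat B_{i+1})$ on the iterations with $\eta_i\neq 1$, where the reweighted secant vectors are rescaled and the perturbation term $r_i$ must be re-derived from the self-concordant Hessian variation, together with a matching estimate over the warm-up phase $i<t_0$. A secondary subtlety is verifying that the admissible step-size interval has logarithmic width bounded below by an absolute constant once $C_i\le\delta_1$, so that refinement contributes only a $\log\log$, and not a $\log$, term to $N_i$.
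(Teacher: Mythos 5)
Your proposal is correct and follows essentially the same route as the paper: invoke a per-iteration complexity bound for the log-bisection line search, average via Jensen's inequality, control the $D_i$-sum via the linear rate of Theorem~\ref{thm:second_linear_rate}, and control the $\rho_i$-sum via the BFGS potential telescoping (Proposition~\ref{lemma_BFGS_superlinear}, giving $\sum_{i<t}\omega(\rho_i-1)\le\Psi(\tilde B_0)+2\sum_{i<t}D_i$). The only difference is presentational: what you frame as ``the delicate step'' in part (b) is in the paper simply imported as Proposition~\ref{lemma_BFGS_superlinear} (cited from prior work with $C_t$ replaced by $D_t$), and the per-iteration bound you describe in terms of $|\log\bar\eta_i|$ and $\log(1/w_i)$ is stated explicitly as Proposition~\ref{proposition_stepsize} (also cited from prior work), namely $\lambda_t\le 2+\log_2\bigl(1+\tfrac{(1-\beta)(1+2D_t)}{\beta-\alpha}\bigr)+2\log_2\bigl(1+\log_2(2(1-\alpha)(1+D_t))+\max\{\log_2\rho_t,\log_2(1/\rho_t)\}\bigr)$, after which $\max\{\log_2\rho_i,\log_2(1/\rho_i)\}$ is converted to $O(\omega(\rho_i-1))$ using parts (b) and (c) of Lemma~\ref{lemma_omega} split by cases $\rho_i\ge 2$ and $\rho_i\le 1/2$.
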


This proposition implies the average number of iterations in Algorithm~\ref{algo_wolfe} is at most $\mathcal{O}(\log{(1 + \Gamma)})$, which is a constant depending on the initial suboptimality $D_0$ and the initial matrix $B_0$. Moreover, when the number of iterations $T$ exceeds $\Omega(\Psi(\tilde{B}_{0}) + \Gamma)$, the average number of function  and gradient evaluations per iteration for Algorithm~\ref{algo_wolfe} is an absolute constant of $\mathcal{O}(1)$. Thus, even in the worst case, the gradient and iteration complexities remain of the same order, up to logarithmic factors.

\section{Numerical Experiments}\label{sec:experiments}

Next, we present numerical experiments applying BFGS to two functions satisfying Assumptions~\ref{ass_self_concodant}. We report our results using two different choices of initial Hessian approximation  $B_0$: (i) $B_0 = I$, and (ii) $B_0 = cI$, where $c = \frac{s^\top y}{\|s\|^2}$, with $s = x_2 - x_1$, $y = \nabla f(x_2) - \nabla f(x_1)$, where $x_1, x_2$ are randomly selected. The line search parameters are also set as $\alpha = 0.1$ and $\beta = 0.9$. In our experiments, we also report the convergence paths of gradient descent (GD) and accelerated gradient descent (AGD), with step sizes determined using backtracking line search.  

\begin{figure}[t!]
    \centering
    \subfigure[$d = 100$.]{\includegraphics[width=0.45\linewidth]{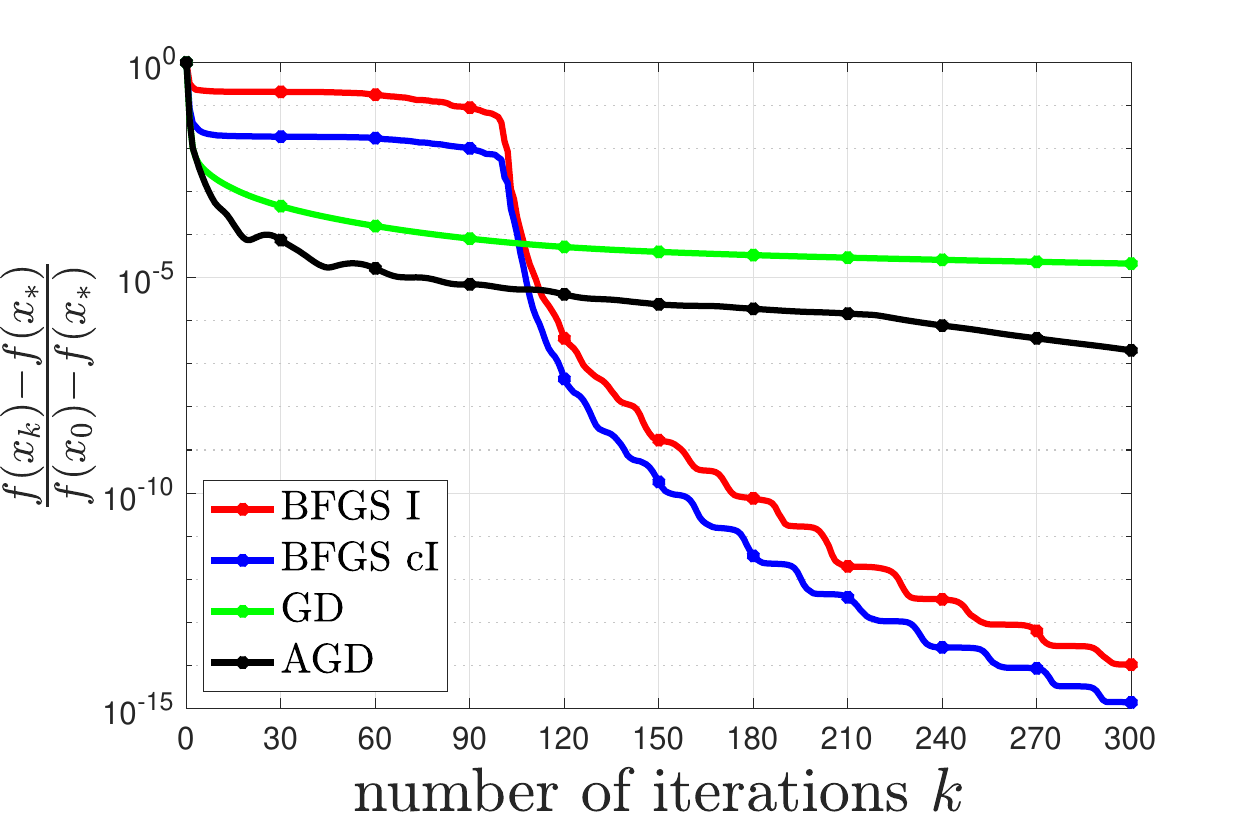}}
    \subfigure[$d = 1000$.]{\includegraphics[width=0.45\linewidth]{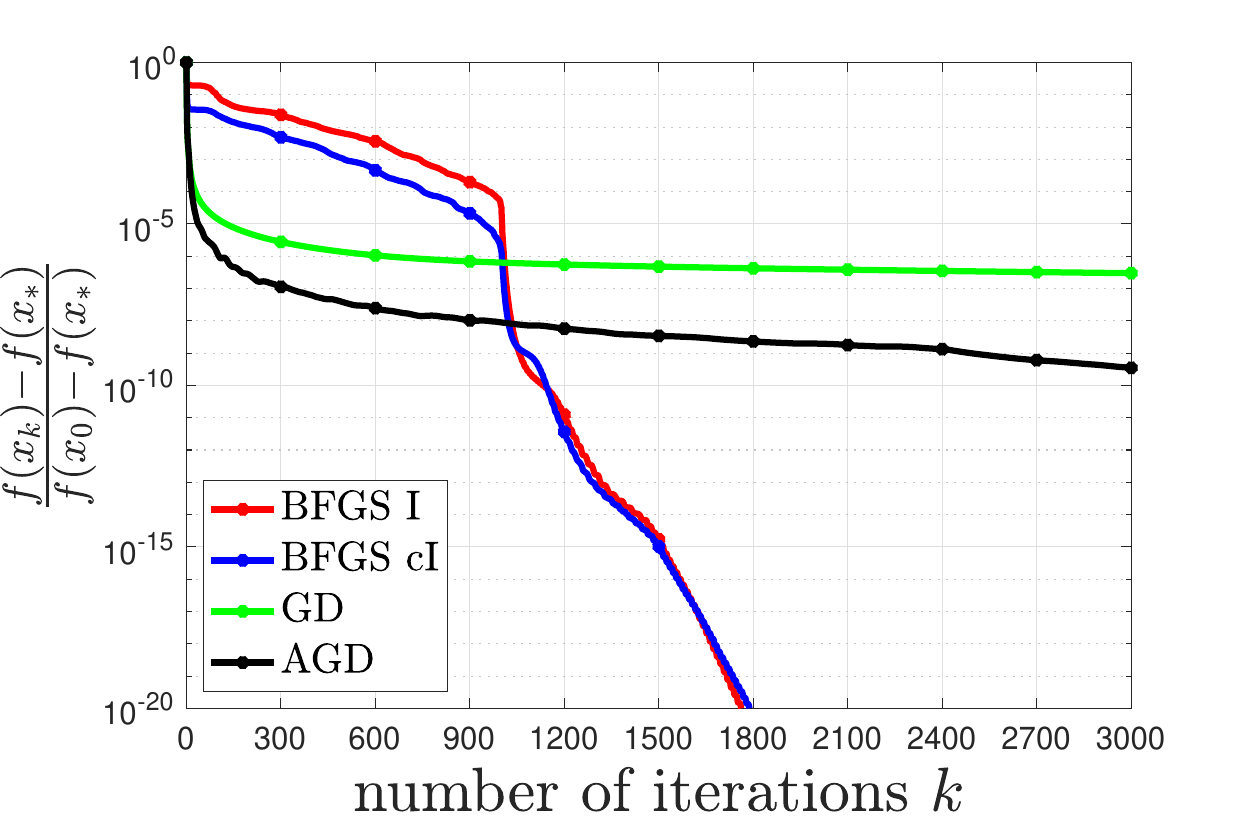}}
    \vspace{-3mm}
    \caption{Convergence rates of BFGS with different $B_0$, gradient descent and accelerated gradient descent for solving the hard cubic function with different dimensions.}
    \label{fig:1}  
    \vspace{-5mm}
\end{figure}

The first function that we study is the cubic function from \cite{hard_cubic}

\begin{equation*}
    f(x) = \frac{\omega_1}{12}\left[\sum_{i = 1}^{d - 1}g(v_i^\top x - v_{i + 1}^\top x) - \omega_2 v_1^\top x\right], \quad  \text{where} \ g(x) =
    \begin{cases}
        \frac{1}{3}|x|^3 & |x| \leq \Delta, \\
        \Delta x^2 - \Delta^2 |x| + \frac{1}{3}\Delta^3  & |x| > \Delta.
    \end{cases}  
\end{equation*}

Note that  $g: \mathbb{R} \to \mathbb{R}$. We set the hypermeters of the objective function as $\omega_1 = 4, \omega_2 = 3, \Delta = 1$ and the vectors $\{v_i\}_{i = 1}^{n}$ are set to be the orthogonal unit basis vectors of $\mathbb{R}^{d}$. We study this function as it serves as a benchmark for establishing lower bounds for second-order methods. The second loss is the logistic regression: $f(x) = \frac{1}{N}\sum_{i = 1}^{N}\ln{(1 + e^{-y_i z_i^\top x})}$, where $\{z_i\}_{i = 1}^{N}$ are the data points and $\{y_i\}_{i = 1}^{N}$ are their corresponding labels. We assume that $z_i \in \mathbb{R}^d$ generated with standard normal distribution and $y_i \in \{-1, 1\}$ generated with uniform distribution for all $1 \leq i \leq N$. We choose the number of data points as $N = d$. Note that both the hard cubic function and the logistic regression function are strictly convex and strongly self-concordant; see Appendix~\ref{proof_of_concordance}.

\begin{figure}[t!]
    \centering
    \subfigure[$d = 100$.]{\includegraphics[width=0.45\linewidth]{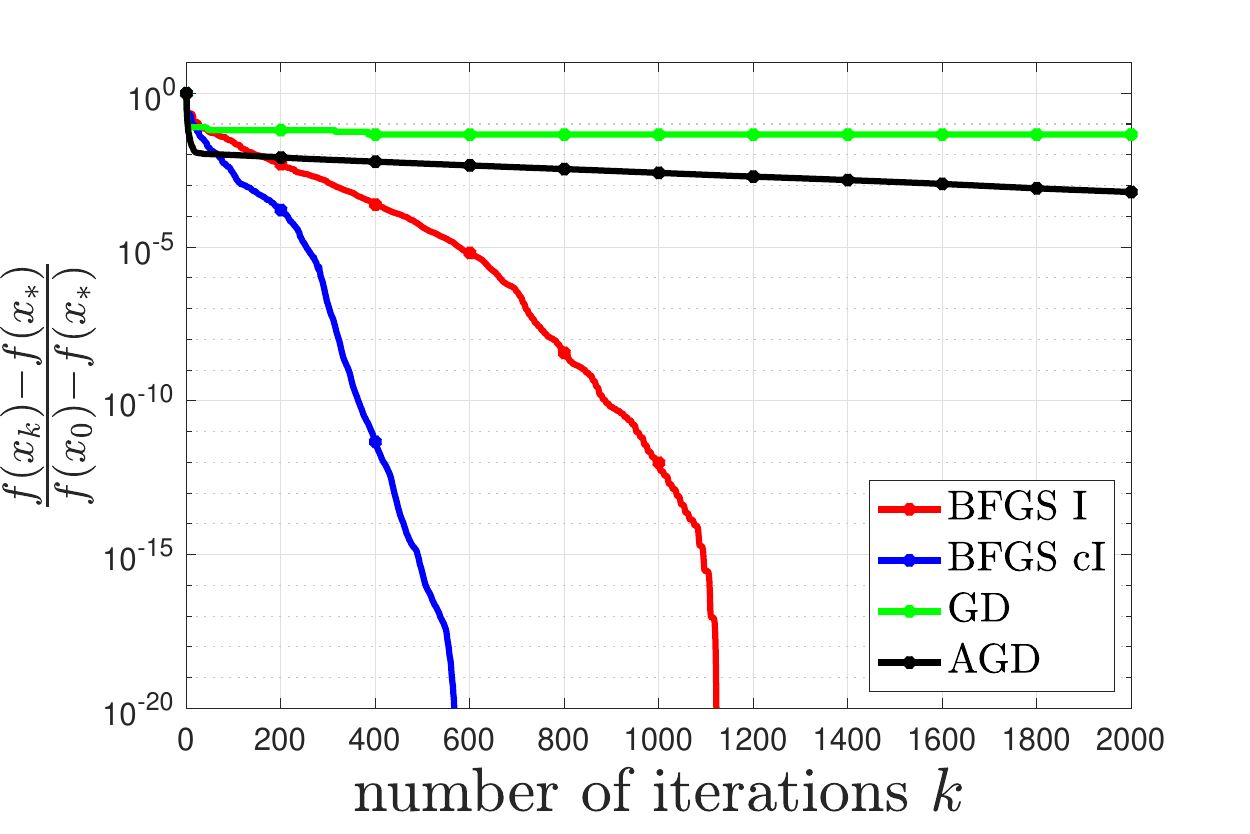}}
    \subfigure[$d = 1000$.]{\includegraphics[width=0.45\linewidth]{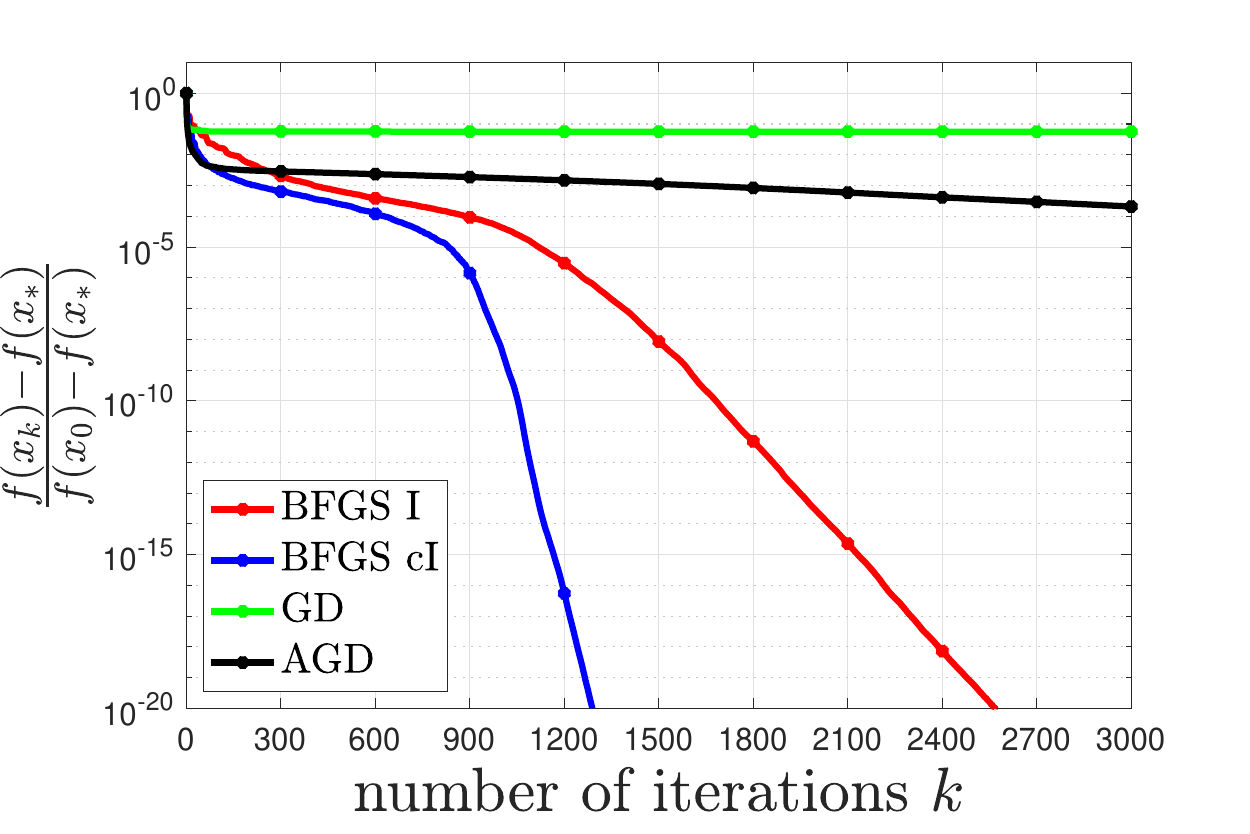}}
    \vspace{-3mm}
    \caption{Convergence rates of BFGS with different $B_0$, gradient descent and accelerated gradient descent for solving the logistic regression function with different dimensions.}
    \label{fig:2}
    \vspace{-5mm}
\end{figure}

The convergence paths for the cubic problem are shown in Figure~\ref{fig:1} for various problem dimensions $d$. Initially, the performance of BFGS is worse than that of the first-order gradient descent and accelerated gradient descent methods. However, after approximately $d$ iterations, BFGS significantly outperforms the first-order methods. Notably, for this problem, the performance of BFGS with $B_0 = I$ and $B_0 = cI$ are nearly identical. Figure~\ref{fig:2} shows the convergence paths for the logistic loss across different problem dimensions $d$. Initially, BFGS performs similarly to first-order methods, but after several iterations, it outperforms them. Notably, in this experiment, BFGS with $B_0 = cI$  outperforms BFGS with $B_0 = I$. We also compared the performance of these different optimization methods with respect to the number of gradient evaluations and the time in seconds. Please check Figure~\ref{fig:7} and Figure~\ref{fig:8} in Appendix~\ref{add_experiments} and any other additional numerical experiments.

Moreover, we display the step sizes selected at each iteration by the inexact line search in the BFGS method in Figure~\ref{fig:3}. We observe that the step sizes are initially very small, then gradually increase, and after approximately $d$ iterations, they stabilize at 1 for nearly all subsequent iterations. This confirms our theoretical analysis: BEGS enters the superlinear convergence phase after about $d$ iterations, and there are only limited iterations where the unit step size didn't satisfy the weak Wolfe condition as proved in Lemma~\ref{lem:bad_set}.

Finally, in Figure~\ref{fig:4}, we compare the performance of BFGS, GD, and AGD under a transformation matrix $A$ chosen to be a non-singular ill-conditioned matrix. We observe that the convergence trajectory of BFGS with this transformation is identical to that of the vanilla BFGS method, consistent with the affine invariance of quasi-Newton methods proved in Proposition~\ref{BFGS_affine}. In contrast, the performance of GD and AGD degrades significantly under the transformation matrix, since first-order methods do not possess the affine-invariance property.

\begin{figure}[t!]
    \centering
    \subfigure[$d = 100$.]{\includegraphics[width=0.45\linewidth]{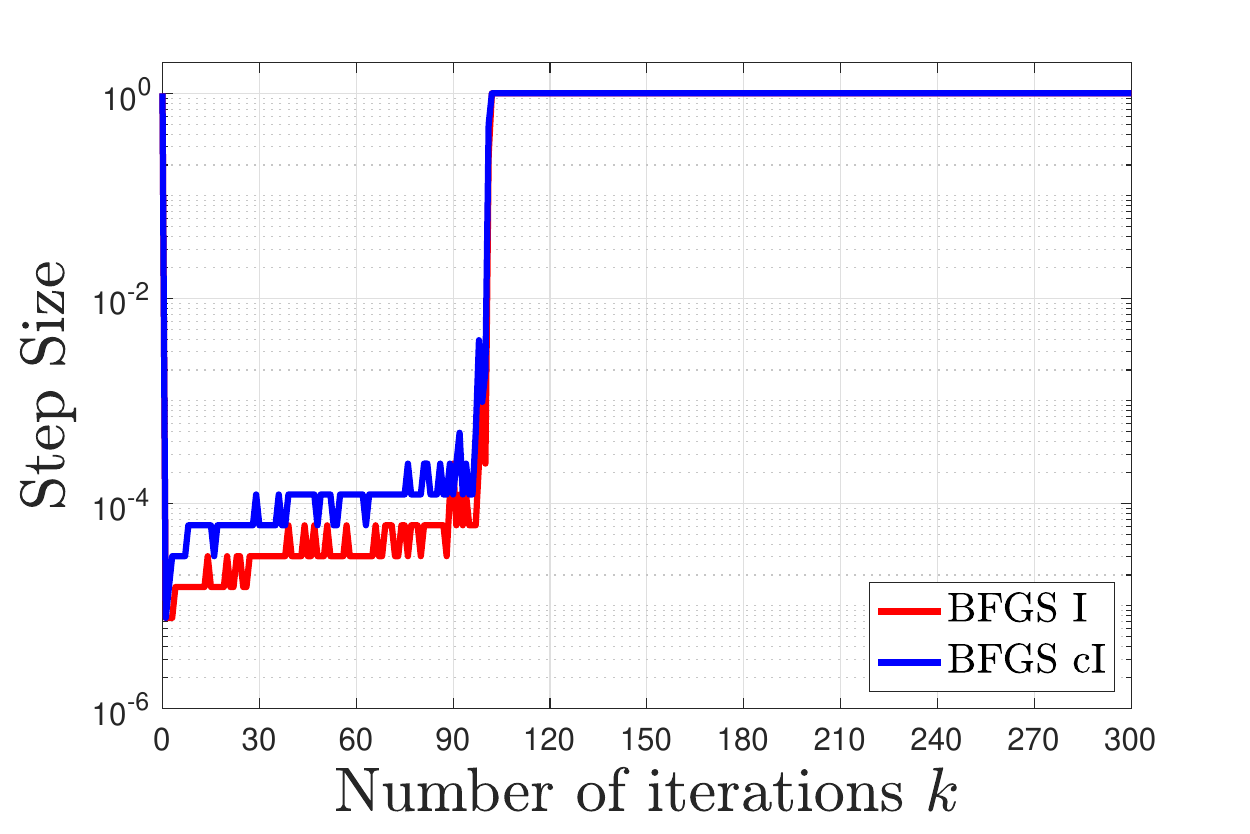}}
    \subfigure[$d = 1000$.]{\includegraphics[width=0.45\linewidth]{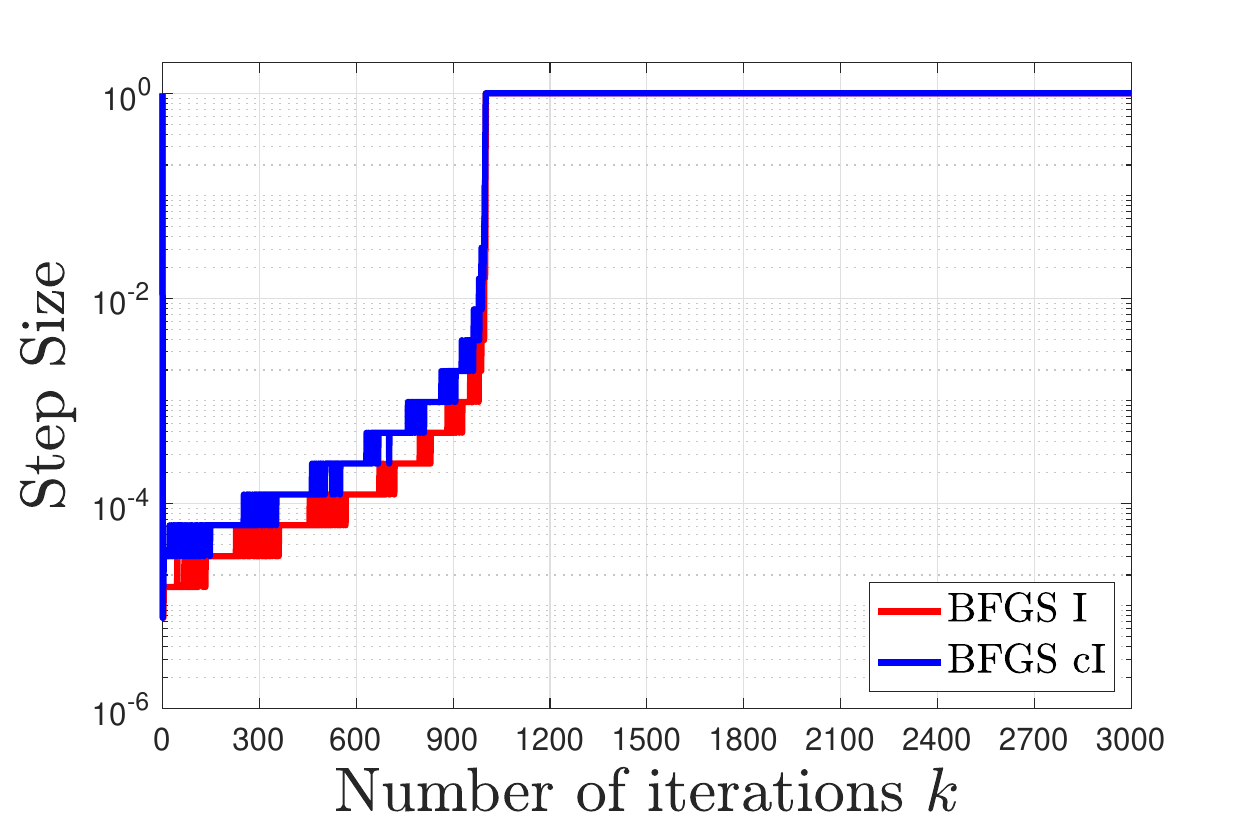}}
    \vspace{-3mm}
    \caption{Step size of BFGS with different $B_0$ using inexact line search for solving the hard cubic function with different dimensions.}
    \label{fig:3}
    \vspace{-5mm}
\end{figure}

\begin{figure}[t!]
    \centering
    \subfigure[$d = 100$.]{\includegraphics[width=0.45\linewidth]{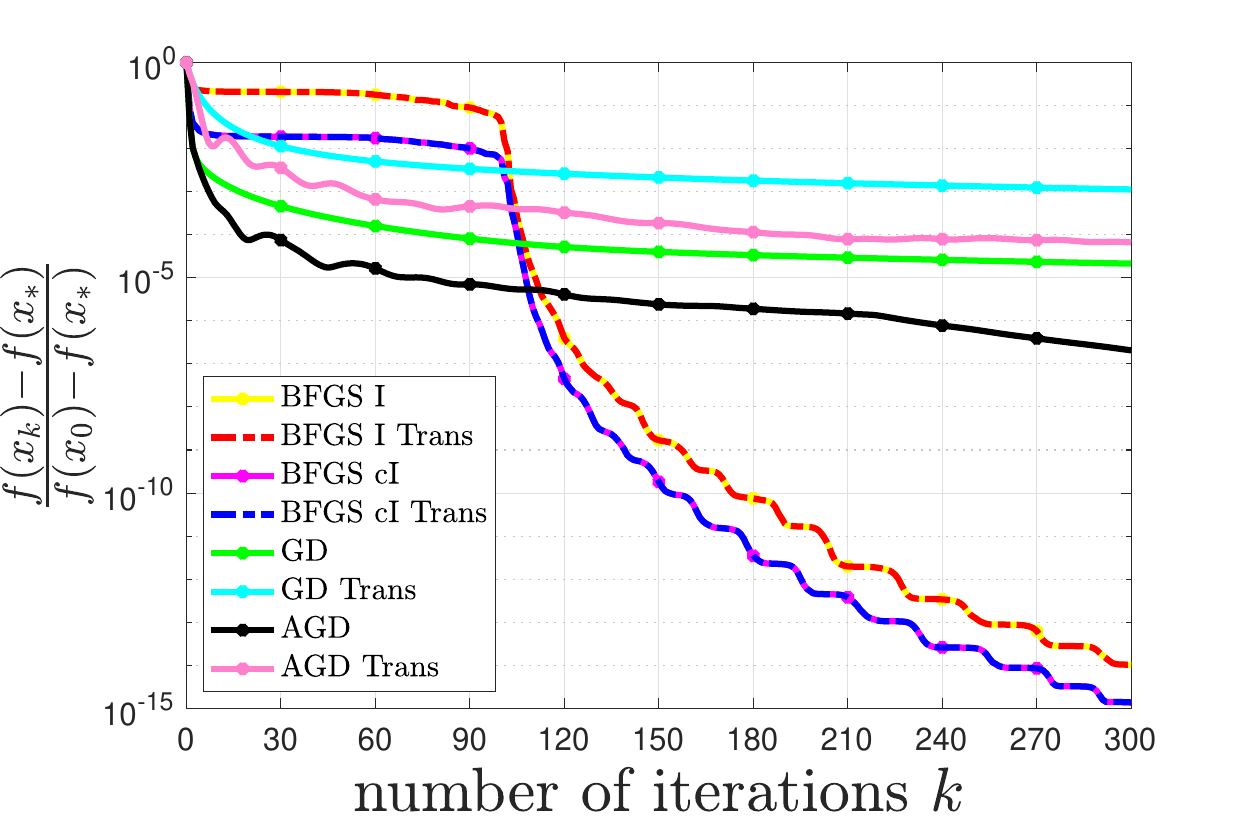}}
    \subfigure[$d = 1000$.]{\includegraphics[width=0.45\linewidth]{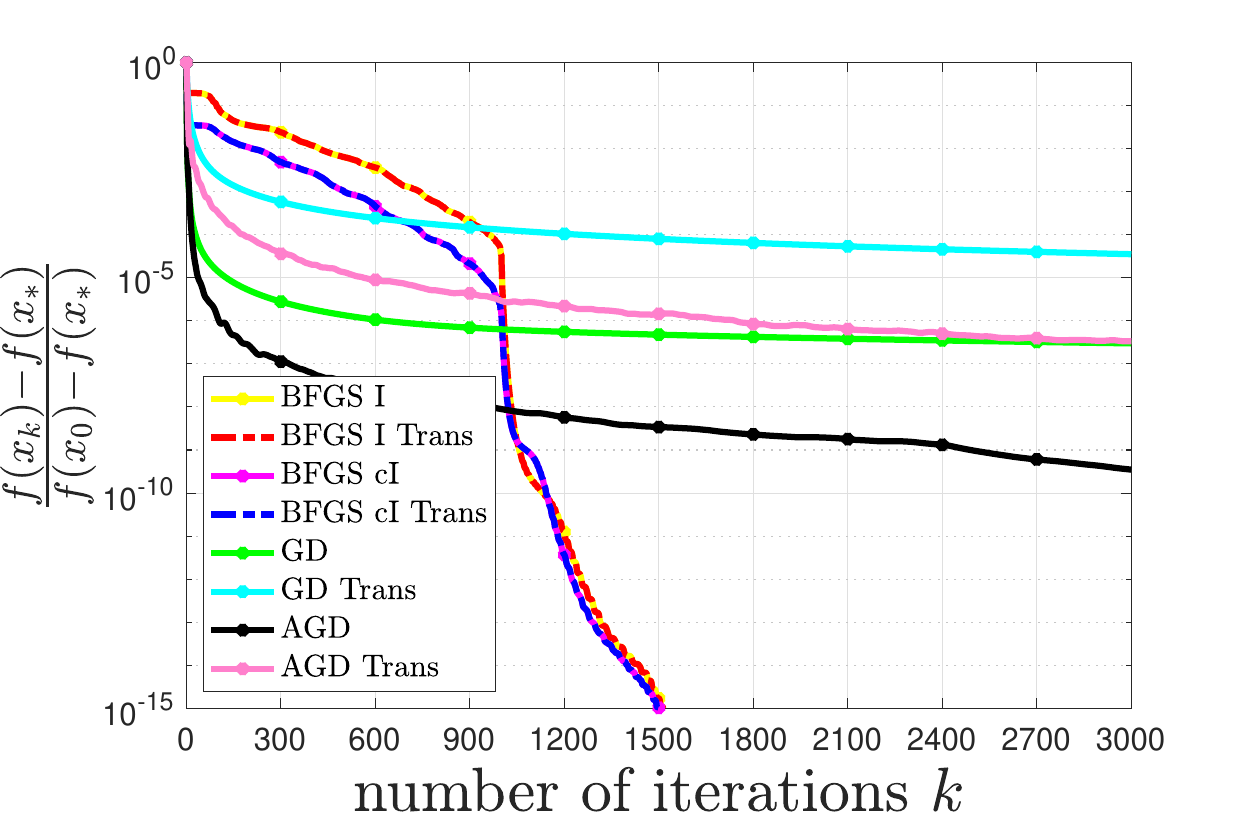}}
    \vspace{-3mm}
    \caption{Convergence rates of BFGS with different $B_0$, gradient descent and accelerated gradient descent for solving the hard cubic function with transformation matrix $A$.}
    \label{fig:4}
    \vspace{-5mm}
\end{figure}

\section{Conclusions}\label{sec:conclusion}

We established non-asymptotic global linear and superlinear convergence rates for the BFGS method on strictly convex and strongly self-concordant functions, using Wolfe step sizes. Our guarantees hold for any initial point $x_0 \in \mathbb{R}^d$ and any positive-definite initial Hessian approximation $B_0 \in \mathbb{S}^{d}_{++}$. Our analysis also respects the affine invariance of BFGS. A limitation is the reliance on strong self-concordance; extending results to standard self-concordance is a potential future direction.


\appendix

\section*{Appendix}

\section{Notations}

Given weight matrix $P\in \mathbb{S}^d_{++}$, recall the weighted vectors defined in \eqref{weighted_vector}. We define the quantity $\hat{\theta}_t$, 
\begin{equation}\label{eq:theta}
    \cos(\hat{\theta}_t) = \frac{-\hat{g}_t^\top \hat{s}_t}{\|\hat{g}_t\|\|\hat{s}_t\|}.
\end{equation}
We also define the following terms which play important roles in all the convergence analysis,
\begin{equation}\label{eq:def_alpha_q_m}
    \hat{p}_t := \frac{f(x_t)-f(x_{t+1})}{-\hat{g}_t^\top \hat{s}_t}, \qquad \hat{q}_t := \frac{\|\hat{g}_t\|^2}{f(x_t)-f(x_*)}, \qquad \hat{m}_t := \frac{\hat{y}_t^\top \hat{s}_t}{\|\hat{s}_t\|^2}, \qquad \hat{n}_t= \frac{\hat{y}_t^\top \hat{s}_t}{-\hat{g}_t^\top \hat{s}_t}. 
\end{equation}
Moreover, we define that
\begin{equation}\label{definition_g}
    \tilde{g}_t := \nabla^2{f(x_*)}^{-\frac{1}{2}}g_t.
\end{equation}
For $\tau_1, \tau_2 \in [0, 1]$, we define the Hessian matrices $J_t$ and $G_t$ as 
\begin{align}\label{def_J}
    J_t := \nabla^2{f(x_t + \tau_1 (x_{t + 1} - x_t))},
\end{align}
\begin{align}\label{def_G}
    G_t := \nabla^2{f(x_t + \tau_2 (x_{*} - x_t))}.
\end{align}
We have that $y_t = \nabla f(x_{t + 1}) - \nabla f(x_t) = J_t(x_{k + 1} - x_k) = J_ts_t$ and $\nabla{f(x_t)} = \nabla{f(x_t)} - \nabla{f(x_*)} = G_t(x_t - x_*)$ for some $\tau_1, \tau_2 \in [0, 1]$ by mean value theorem.

\section{Lemmas and Propositions}

\begin{lemma}\label{lemma_line_search}
    Consider the BFGS method with Armijo-Wolfe inexact line search, where the step size satisfies the conditions in  \eqref{sufficient_decrease} and \eqref{curvature_condition}. If $B_t$ is symmetric positive definite, we have $f(x_{t + 1}) \leq f(x_t)$ and the following results hold:
    \begin{equation}
        \hat{p}_t = \frac{f(x_t)-f(x_{t + 1})}{-{g}_t^\top {s}_t} \geq \alpha, \qquad \hat{n}_t = \frac{{y}_t^\top {s}_t}{- {g}_t^\top {s}_t} \geq 1 - \beta.
    \end{equation}
\end{lemma}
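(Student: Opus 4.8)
The statement asks to show two things about the BFGS iterate with an Armijo--Wolfe step size and $B_t \succ 0$: first that the function value decreases, $f(x_{t+1})\le f(x_t)$, and second the two lower bounds $\hat p_t \ge \alpha$ and $\hat n_t \ge 1-\beta$. The plan is to work directly from the two line-search conditions \eqref{sufficient_decrease}--\eqref{curvature_condition} together with the descent-direction identity $d_t = -B_t^{-1} g_t$. The only genuine input needed beyond algebra is that $d_t$ is a strict descent direction, i.e. $g_t^\top d_t = -g_t^\top B_t^{-1} g_t < 0$ whenever $g_t \ne 0$; this is immediate from $B_t \in \mathbb{S}^d_{++}$ (hence $B_t^{-1}\in\mathbb{S}^d_{++}$). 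If $g_t = 0$ the iteration has already reached the optimum and all claims hold trivially, so I would dispose of that case first and then assume $g_t^\top d_t < 0$.

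\textbf{Step 1 (function decrease and the bound on $\hat p_t$).} Since $\eta_t$ satisfies \eqref{sufficient_decrease}, $f(x_{t+1}) = f(x_t + \eta_t d_t) \le f(x_t) + \alpha\eta_t \nabla f(x_t)^\top d_t$. Using $s_t = \eta_t d_t$, this reads $f(x_t) - f(x_{t+1}) \ge -\alpha\, g_t^\top s_t$. Because $g_t^\top s_t = \eta_t\, g_t^\top d_t < 0$ and $\eta_t>0$, we have $-g_t^\top s_t > 0$, so $f(x_{t+1}) \le f(x_t)$ and, dividing by the positive quantity $-g_t^\top s_t$, $\hat p_t = \tfrac{f(x_t)-f(x_{t+1})}{-g_t^\top s_t} \ge \alpha$. (One should note the definition \eqref{eq:def_alpha_q_m} is phrased with $-\hat g_t^\top\hat s_t$, but by \eqref{weighted_vector} $\hat g_t^\top\hat s_t = g_t^\top P^{-1/2}P^{1/2}s_t = g_t^\top s_t$, so the weighted and unweighted inner products coincide and no $P$-dependence enters here; I would remark on this once.)

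\textbf{Step 2 (the bound on $\hat n_t$).} From the curvature condition \eqref{curvature_condition}, $\nabla f(x_{t+1})^\top d_t \ge \beta\, \nabla f(x_t)^\top d_t$. Subtract $\nabla f(x_t)^\top d_t$ from both sides: $(\nabla f(x_{t+1}) - \nabla f(x_t))^\top d_t \ge (\beta - 1)\nabla f(x_t)^\top d_t$, i.e. $y_t^\top d_t \ge (\beta-1) g_t^\top d_t = (1-\beta)(-g_t^\top d_t)$. Multiplying through by $\eta_t > 0$ gives $y_t^\top s_t \ge (1-\beta)(-g_t^\top s_t)$, and since $-g_t^\top s_t > 0$ we may divide to obtain $\hat n_t = \tfrac{y_t^\top s_t}{-g_t^\top s_t} \ge 1-\beta$ (again using $\hat y_t^\top\hat s_t = y_t^\top s_t$). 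As a byproduct this also certifies $y_t^\top s_t > 0$, which is what guarantees the BFGS update \eqref{eq:BFGS_update} preserves positive definiteness, so it is worth stating explicitly.

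\textbf{Main obstacle.} Honestly there is no substantial obstacle here — this is a bookkeeping lemma that unpacks the Wolfe conditions. The one subtlety worth handling carefully is the bridge between the weighted quantities $\hat p_t,\hat n_t$ (as written in \eqref{eq:def_alpha_q_m} using hatted vectors) and the unweighted line-search inequalities: I would point out that the relevant bilinear forms are weight-invariant, $\hat g_t^\top \hat s_t = g_t^\top s_t$ and $\hat y_t^\top\hat s_t = y_t^\top s_t$, so the conclusion holds for every choice of $P\in\mathbb{S}^d_{++}$ simultaneously. The only other thing to be careful about is not to divide by $-g_t^\top s_t$ before establishing it is strictly positive, which is why positive definiteness of $B_t$ (equivalently, $d_t$ being a strict descent direction) is invoked at the outset.
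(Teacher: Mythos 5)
Your proposal is correct and is the standard argument. The paper itself only cites Lemma 2.1 of \cite{qiujiang2024quasinewton} for this statement, and what you have written is precisely the routine unpacking of the Wolfe conditions that such a reference would contain: $B_t\succ 0$ gives $g_t^\top d_t<0$, the sufficient-decrease condition then yields $f(x_{t+1})\le f(x_t)$ and $\hat p_t\ge\alpha$ after dividing by the positive quantity $-g_t^\top s_t$, and subtracting $g_t^\top d_t$ from the curvature condition gives $y_t^\top s_t\ge(1-\beta)(-g_t^\top s_t)$, hence $\hat n_t\ge 1-\beta$. Your remark that $\hat g_t^\top\hat s_t=g_t^\top s_t$ and $\hat y_t^\top\hat s_t=y_t^\top s_t$ for any weight matrix $P$ is a worthwhile clarification, and your observation that this also certifies $y_t^\top s_t>0$ (so positive definiteness propagates under the BFGS update) is a correct and useful aside.
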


\begin{proof}
Please check Lemma 2.1 in \cite{qiujiang2024quasinewton}.
\end{proof}

\begin{proposition}\label{lemma_BFGS}
Let $\{{B}_t\}_{t\geq 0}$ be the Hessian approximation matrices generated by the BFGS update in \eqref{eq:BFGS_update}. For a given weight matrix $P\in \mathbb{S}^d_{++}$, recall the weighted vectors and the weighted matrix in \eqref{weighted_vector}. Then, we have that for any $t \geq 0$,
\begin{equation}\label{eq:potential_decrease}
    \Psi(\hat{B}_{t+1}) \leq \Psi(\hat{B}_t) + \frac{\|\hat{y}_t\|^2}{\hat{y}_t^\top \hat{s}_t} -1 + \log \frac{\cos^2\hat{\theta}_t}{\hat{m}_t},
\end{equation}
where $\hat{m}_t$ is defined in \eqref{eq:def_alpha_q_m} and $\cos(\hat{\theta}_t)$ is defined in \eqref{eq:theta}. As a corollary, we have that for any $t \geq 1$,
\begin{equation}\label{eq:sum_of_logs}
    \sum_{i = 0}^{t - 1} \log{\frac{\cos^2(\hat{\theta}_i)}{\hat{m}_i}} \geq  - \Psi(\hat{B}_{0}) + \sum_{i = 0}^{t - 1}\left(1-\frac{\|\hat{y}_i\|^2}{\hat{y}_i^\top \hat{s}_i} \right).
\end{equation}
\end{proposition}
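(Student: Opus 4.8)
The plan is to prove the single-step bound \eqref{eq:potential_decrease} and then obtain \eqref{eq:sum_of_logs} by a straightforward telescoping argument. For the single-step bound I would work entirely with the weighted quantities, using the fact (already recorded after \eqref{weighted_vector}) that $\hat{B}_{t+1} = \hat{B}_t - \frac{\hat{B}_t \hat{s}_t \hat{s}_t^\top \hat{B}_t}{\hat{s}_t^\top \hat{B}_t \hat{s}_t} + \frac{\hat{y}_t \hat{y}_t^\top}{\hat{s}_t^\top \hat{y}_t}$ has exactly the form of a BFGS update, so it suffices to prove \eqref{eq:potential_decrease} for an \emph{unweighted} BFGS step and then specialize. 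Thus fix a symmetric positive definite $B$, vectors $s,y$ with $s^\top y>0$, and set $B^+ = B - \frac{Bss^\top B}{s^\top B s} + \frac{yy^\top}{s^\top y}$.

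The key computation is to track how $\Tr$ and $\log\Det$ change under the rank-two update. For the trace, linearity gives $\Tr(B^+) = \Tr(B) - \frac{s^\top B^2 s}{s^\top B s} + \frac{\|y\|^2}{s^\top y}$, so the $-\frac{s^\top B^2 s}{s^\top B s}$ term is the one we must control. For the determinant, I would use the standard BFGS identity $\Det(B^+) = \Det(B)\cdot \frac{s^\top y}{s^\top B s}$ (which follows from the matrix determinant lemma applied twice, or is quoted from, e.g., the analysis in \cite{QN_tool} / \cite{nocedal2006numerical}). Combining these, and writing $\Psi(B^+) - \Psi(B) = \Tr(B^+) - \Tr(B) - \log\Det(B^+) + \log\Det(B)$, we get
\begin{equation*}
\Psi(B^+) - \Psi(B) = \frac{\|y\|^2}{s^\top y} - \frac{s^\top B^2 s}{s^\top B s} - \log\frac{s^\top y}{s^\top B s}.
\end{equation*}
Now I rewrite the middle two terms. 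Note $-g_t = B_t s_t / \eta_t$ in the BFGS update, so $g_t$ is parallel to $B_t s_t$; translating to weighted variables, $\hat{g}_t$ is parallel to $\hat{B}_t \hat{s}_t$, hence $\cos^2\hat\theta_t = \frac{(\hat g_t^\top \hat s_t)^2}{\|\hat g_t\|^2 \|\hat s_t\|^2} = \frac{(\hat s_t^\top \hat B_t \hat s_t)^2}{(\hat s_t^\top \hat B_t^2 \hat s_t)(\hat s_t^\top \hat s_t)}$, while $\hat m_t = \frac{\hat y_t^\top \hat s_t}{\|\hat s_t\|^2}$. Therefore $\frac{s^\top B s}{s^\top B^2 s} \cdot \frac{s^\top y}{s^\top B s} = \frac{\cos^2\hat\theta_t \cdot \|\hat s_t\|^2}{s^\top B s}\cdot\frac{s^\top y}{\|\hat s_t\|^2}\cdot\frac{\|\hat s_t\|^2}{s^\top B s}$ — more cleanly: $\frac{s^\top y}{s^\top B^2 s / s^\top B s \cdot s^\top B s}\cdot(\cdots)$; the upshot is that $\frac{s^\top y}{s^\top B s} = \hat m_t \cdot \frac{\|\hat s_t\|^2}{\hat s_t^\top \hat B_t \hat s_t}$ and $\frac{\hat s_t^\top \hat B_t^2 \hat s_t}{\hat s_t^\top \hat B_t \hat s_t} = \frac{\hat s_t^\top \hat B_t \hat s_t}{\|\hat s_t\|^2 \cos^2\hat\theta_t}$. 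I would substitute to convert $\frac{s^\top y}{s^\top B s} = \frac{\hat m_t}{(\hat s_t^\top \hat B_t \hat s_t)/\|\hat s_t\|^2}$ and pair it against the trace term; after the dust settles one uses the elementary inequality $a - \log a \ge 1$ (equivalently $\omega_*$-type bounds) applied to $a = \frac{\hat s_t^\top \hat B_t \hat s_t}{\|\hat s_t\|^2 \cos^2\hat\theta_t} \cdot (\text{something})$ to absorb the leftover $-\frac{s^\top B^2 s}{s^\top B s}$ into a $-1$, yielding $\Psi(\hat B_{t+1}) - \Psi(\hat B_t) \le \frac{\|\hat y_t\|^2}{\hat y_t^\top \hat s_t} - 1 + \log\frac{\cos^2\hat\theta_t}{\hat m_t}$, which is \eqref{eq:potential_decrease}.

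Finally, \eqref{eq:sum_of_logs} follows by summing \eqref{eq:potential_decrease} over $i = 0,\dots,t-1$, telescoping the $\Psi(\hat B_{i+1}) - \Psi(\hat B_i)$ terms to $\Psi(\hat B_t) - \Psi(\hat B_0)$, using $\Psi(\hat B_t) \ge 0$, and rearranging. I expect the main obstacle to be the bookkeeping in the middle step: correctly expressing $\frac{s^\top B^2 s}{s^\top B s}$ and $\frac{s^\top y}{s^\top B s}$ in terms of $\cos^2\hat\theta_t$, $\hat m_t$, and the (nonnegative) quantity $\log\big(\frac{\hat s_t^\top \hat B_t \hat s_t}{\|\hat s_t\|^2}\big) - \frac{\hat s_t^\top \hat B_t \hat s_t}{\|\hat s_t\|^2} + 1 \le 0$, and checking that the scalar inequality $a - 1 - \log a \ge 0$ is applied to the right ratio so that the $-1$ on the right-hand side is exactly what remains. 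This identity-plus-one-scalar-inequality structure is the classical Byrd–Nocedal potential argument; the only novelty here is carrying the weight matrix $P$ through, which is harmless since the weighted update is itself a genuine BFGS update.
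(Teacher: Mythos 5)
Your argument is correct and is essentially the classical Byrd--Nocedal potential computation that the paper outsources to \cite{jin2024non}: the BFGS trace identity $\Tr(\hat B_{t+1})=\Tr(\hat B_t)-\hat s_t^\top\hat B_t^2\hat s_t/\hat s_t^\top\hat B_t\hat s_t+\|\hat y_t\|^2/\hat y_t^\top\hat s_t$ and determinant identity $\Det(\hat B_{t+1})=\Det(\hat B_t)\,\hat s_t^\top\hat y_t/\hat s_t^\top\hat B_t\hat s_t$, together with $\hat g_t\parallel\hat B_t\hat s_t$ (so $\cos^2\hat\theta_t=(\hat s_t^\top\hat B_t\hat s_t)^2/((\hat s_t^\top\hat B_t^2\hat s_t)\|\hat s_t\|^2)$), give an exact expression for $\Psi(\hat B_{t+1})-\Psi(\hat B_t)$, and applying $\log a\le a-1$ to $a=\hat s_t^\top\hat B_t^2\hat s_t/\hat s_t^\top\hat B_t\hat s_t$ produces \eqref{eq:potential_decrease}, after which telescoping with $\Psi(\hat B_t)\ge 0$ gives \eqref{eq:sum_of_logs}. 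The only slip is in your closing parenthetical, where you quote the scalar inequality with the wrong ratio $\hat s_t^\top\hat B_t\hat s_t/\|\hat s_t\|^2$ and call the resulting expression ``nonnegative'' when it is nonpositive; the ratio you actually identify in the main text, $a=\hat s_t^\top\hat B_t\hat s_t/(\|\hat s_t\|^2\cos^2\hat\theta_t)$, does simplify to the correct one, so the argument stands.
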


\begin{proof}
    Please check Proposition 2 in \cite{jin2024non}.
\end{proof}

\begin{lemma}\label{lemma_omega}
Recall the definition of function $\omega(x)$ in \eqref{definition_omega} and define the function $\omega_{*}(x)$ as
\begin{equation}\label{definition_omega_star}
    \omega_{*}(x) := - x - \log{(1 - x)}, \qquad \forall x < 1.
\end{equation}
We have that
\begin{enumerate}[(a)]
    \item $\omega(x)$ is increasing function for $x > 0$ and decreasing function for $-1 < x < 0$. Moreover, $\omega(x) \geq 0$ for all $x > -1$.
    \item When $x \geq 0$, we have that $\omega(x) \geq \frac{x^2}{2(1 + x)}$. 
    \item When $-1 < x \leq 0$, we have that $\omega(x) \geq \frac{x^2}{2 + x}$.
    \item When $0 < x < 1$, we have that $\omega_{*}(x) \leq \frac{x^2}{2(1 - x)}$.
    \item We have $\sqrt{2x} + \frac{2x}{3} \leq \omega^{-1}(x) \leq \sqrt{2x} + x$, where $\omega^{-1}$ is the inverse function of $\omega(x)$ when $x > 0.$
\end{enumerate}
\end{lemma}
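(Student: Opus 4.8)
The plan is to prove the five items by elementary one-variable calculus: items (a)--(d) reduce to a sign analysis of a single derivative together with a boundary value at $0$, and item (e) is obtained by inverting sharp one-sided estimates on $\omega$. Throughout I use the formulas $\omega'(x)=1-\tfrac{1}{1+x}=\tfrac{x}{1+x}$ and $\omega_*'(x)=\tfrac{x}{1-x}$, valid on the respective domains by \eqref{definition_omega} and \eqref{definition_omega_star}.

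For (a): from $\omega'(x)=\tfrac{x}{1+x}$ we see $\omega'>0$ on $(0,\infty)$ and $\omega'<0$ on $(-1,0)$, so $\omega$ is strictly increasing on $(0,\infty)$, strictly decreasing on $(-1,0)$, and attains its minimum at $x=0$ with $\omega(0)=0$; hence $\omega(x)\ge 0$ for all $x>-1$. For (b), (c), (d) I would, in each case, let $h$ be the difference of the two sides arranged so that the claim reads $h\ge 0$, note $h(0)=0$, and show $h'$ has constant sign on the relevant interval by combining its two terms over a common denominator. The key point is that each resulting numerator collapses to a monomial: for (b) one gets $h'(x)=\tfrac{x^2}{2(1+x)^2}\ge 0$ on $[0,\infty)$; for (c) one gets $h'(x)=\tfrac{-x^2}{(1+x)(2+x)^2}\le 0$ on $(-1,0]$, so $h$ decreases down to $h(0)=0$ and is therefore nonnegative to the left of $0$; and for (d) one gets $h'(x)=\tfrac{x^2}{2(1-x)^2}\ge 0$ on $[0,1)$. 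Combined with $h(0)=0$, each yields the stated inequality.

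For (e): by (a), $\omega$ is a strictly increasing bijection of $(0,\infty)$ onto $(0,\infty)$, so for $x>0$ all arguments below are positive and the inequalities may be rephrased through $\omega$. The bound $\omega^{-1}(x)\le \sqrt{2x}+x$ is equivalent to $\omega(\sqrt{2x}+x)\ge x$, which after substituting \eqref{definition_omega} simplifies to $\log(1+\sqrt{2x}+x)\le\sqrt{2x}$, i.e. $1+\sqrt{2x}+x\le e^{\sqrt{2x}}$; this is immediate from $e^t\ge 1+t+\tfrac{t^2}{2}$ with $t=\sqrt{2x}$. The bound $\omega^{-1}(x)\ge \sqrt{2x}+\tfrac{2x}{3}$ is equivalent to $\omega(\sqrt{2x}+\tfrac{2x}{3})\le x$, which (writing $t=\sqrt{2x}$, so $x=t^2/2$) reduces to $t-\tfrac{t^2}{6}\le\log(1+t+\tfrac{t^2}{3})$. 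I would prove this last inequality by setting $\psi(t):=\log(1+t+\tfrac{t^2}{3})-t+\tfrac{t^2}{6}$, observing $\psi(0)=0$, and computing (after clearing denominators) that $\psi'(t)=\dfrac{t^3/9}{1+t+t^2/3}\ge 0$ for $t\ge 0$, whence $\psi\ge 0$ on $[0,\infty)$.

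None of the steps is deep; the only parts that are not purely mechanical are the algebraic simplifications where a numerator unexpectedly collapses to a monomial — namely the $h'$ computations in (b)--(d) and especially the simplification $\psi'(t)=\tfrac{t^3/9}{1+t+t^2/3}$ in (e). I would write these cancellations out in full, since they are what makes each sign argument transparent; everything else follows by monotonicity and boundary values.
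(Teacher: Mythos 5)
Your proposal is correct, and all of the key computations check out: $h'(x)=\tfrac{x^2}{2(1+x)^2}$ for (b), $h'(x)=\tfrac{-x^2}{(1+x)(2+x)^2}$ for (c), $h'(x)=\tfrac{x^2}{2(1-x)^2}$ for (d), and $\psi'(t)=\tfrac{t^3/9}{1+t+t^2/3}$ for (e) (the numerator indeed collapses because $(1-\tfrac{t}{3})(1+t+\tfrac{t^2}{3})=1+\tfrac{2t}{3}-\tfrac{t^3}{9}$). The reduction of the two bounds in (e) via the equivalences $\omega^{-1}(x)\le y \Leftrightarrow x\le\omega(y)$ and $\omega^{-1}(x)\ge y \Leftrightarrow x\ge\omega(y)$ is legitimate since $\omega$ is a strictly increasing bijection of $(0,\infty)$ onto itself, which your part (a) supplies. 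The paper itself does not prove this lemma at all: it cites Lemma G.1 of one reference for (a)--(c), Lemma 5.1.5 of Nesterov's lectures for (d), and Lemma A.1 of another reference for (e). Those cited proofs are of the same elementary one-variable-calculus flavor as yours, so your argument is not conceptually different from what the literature does --- but it has the advantage of being self-contained and of handling all five parts uniformly (difference function, value zero at the origin, single-signed derivative with a monomial numerator). The only thing to be careful about when writing it up is the direction of monotonicity in (c): there $h$ is decreasing on $(-1,0]$ and the conclusion $h\ge 0$ comes from reading the interval from $0$ leftward, which you state correctly.
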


\begin{proof}
    Please check Lemma G.1 in \cite{qiujiang2024quasinewton} for the proof of (a), (b), and (c). Please check Lemma 5.1.5 from \cite{convexlecture} for the proof of (d) and Lemma A.1 from \cite{antonglobal} for the proof of (e).
\end{proof}

\begin{lemma}\label{lemma_concordance}
    Recall the definition of function $\omega(x)$ and $\omega_*(x)$ in \eqref{definition_omega} and \eqref{definition_omega_star}. If Assumption~\ref{ass_self_concodant} holds, we have that
    \begin{equation}
        f(y) \geq f(x) + g(x)^\top(y - x) + \frac{4}{M^2}\omega(\frac{M}{2}\|y - x\|_x).
    \end{equation}
    Moreover, when $\|y - x\|_x < \frac{2}{M}$, we have that
    \begin{equation}
        f(y) \leq f(x) + g(x)^\top(y - x) + \frac{4}{M^2}\omega_{*}(\frac{M}{2}\|y - x\|_x).
    \end{equation}
\end{lemma}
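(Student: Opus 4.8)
My plan is to reduce the multidimensional statement to a one-dimensional integration argument along the segment joining $x$ and $y$, exactly as in the classical theory of self-concordant functions (cf. Nesterov's \cite{convexlecture}), but tracking the strong self-concordance constant $M$ rather than the standard one. Fix $x,y \in \mathbb{R}^d$, write $r := \|y-x\|_x$, and define $\phi(t) := f(x + t(y-x))$ for $t \in [0,1]$, so that $\phi'(t) = g(x+t(y-x))^\top(y-x)$ and $\phi''(t) = (y-x)^\top \nabla^2 f(x+t(y-x))(y-x)$. The goal is the two-sided bound $\phi(1) \geq \phi(0) + \phi'(0) + \tfrac{4}{M^2}\omega(\tfrac{M}{2}r)$ and, when $r < 2/M$, the reverse inequality with $\omega_*$ in place of $\omega$. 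Everything hinges on controlling how $\phi''(t)$ varies with $t$.

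The key step is to derive a differential inequality for $\phi''$. Applying Assumption~\ref{ass_self_concodant} with the points $x_t := x + t(y-x)$ and $z := x$, I get $\nabla^2 f(x_t) - \nabla^2 f(x) \preceq M\|x_t - x\|_x \nabla^2 f(x) = Mtr\,\nabla^2 f(x)$, and symmetrically a lower bound with $-Mtr$; sandwiching gives $(1 - Mtr)\,\nabla^2 f(x) \preceq \nabla^2 f(x_t) \preceq (1 + Mtr)\,\nabla^2 f(x)$ for $t$ small enough that $1 - Mtr > 0$. (One should be careful about whether the direct comparison gives $\nabla^2 f(x_t)$ between $(1\pm Mtr)\nabla^2 f(x)$ or between $(1\mp Mtr)^{\pm 1}\nabla^2 f(x)$; I would use whichever form of strong self-concordance is cleanest — possibly first bootstrapping to the standard self-concordant inequality $\|v\|_{x_t} \le \|v\|_x/(1 - Mtr/2 \cdot \text{const})$ and back, as the excerpt notes $f$ is $M/2$-self-concordant.) Contracting against $(y-x)$ yields $(1-Mtr)\phi''(0) \le \phi''(t) \le (1+Mtr)\phi''(0)$, i.e. writing $a := \phi''(0) = r^2$,
\begin{equation*}
    r^2(1 - Mtr) \le \phi''(t) \le r^2(1 + Mtr).
\end{equation*}
Then I integrate twice: $\phi'(t) - \phi'(0) = \int_0^t \phi''(\tau)\,d\tau$ and $\phi(1) - \phi(0) - \phi'(0) = \int_0^1 (\phi'(t) - \phi'(0))\,dt = \int_0^1 \int_0^t \phi''(\tau)\,d\tau\,dt$. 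Plugging in the lower bound $\phi''(\tau) \ge r^2(1 - M r \tau)$ and computing the double integral gives a value which, after the substitution making the bound tight, collapses to exactly $\tfrac{4}{M^2}\big(\tfrac{Mr}{2} - \log(1 + \tfrac{Mr}{2})\big) = \tfrac{4}{M^2}\omega(\tfrac{Mr}{2})$; similarly the upper bound $\phi''(\tau) \le r^2(1 + Mr\tau)$, valid on the whole interval when $Mr/2 < 1$ after the appropriate self-concordant normalization, integrates to $\tfrac{4}{M^2}\omega_*(\tfrac{Mr}{2})$, using $\omega_*(s) = -s - \log(1-s)$. The matching of these elementary integrals to $\omega$ and $\omega_*$ is the routine-but-delicate bookkeeping; it is cleanest to recall that $\omega'(s) = s/(1+s)$ and $\omega_*'(s) = s/(1-s)$ and verify the identities by differentiating in $r$.

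The main obstacle I anticipate is getting the \emph{exact} constants and the precise form of the pointwise Hessian bound right, rather than just an inequality of the correct order. In particular, the standard self-concordant bounds are usually stated with factors like $1/(1 - s)^2$ on the Hessian (second-order) and lead to $\omega, \omega_*$ with argument equal to the self-concordant norm; here we must carry the \emph{strong} self-concordance form, which is linear in $\|x-y\|_z$, and check that it produces precisely the stated argument $\tfrac{M}{2}\|y-x\|_x$ and prefactor $\tfrac{4}{M^2}$. I would handle this by invoking the cleanest available intermediate fact — likely that $M$-strong self-concordance implies $M/2$-self-concordance (stated in the excerpt) so that the textbook inequalities of \cite{convexlecture} apply with self-concordant constant $M/2$, which is exactly what yields the $4/M^2 = 1/(M/2)^2$ prefactor and the $M/2$ inside $\omega$. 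The condition $\|y-x\|_x < 2/M$ is then exactly the condition $(M/2)\|y-x\|_x < 1$ needed for $\omega_*$ to be finite.
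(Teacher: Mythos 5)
Your fallback plan (use the fact, stated in the paper's Section~2, that $M$-strong self-concordance implies $M/2$-self-concordance, then invoke the textbook bounds for a self-concordant function with parameter $M/2$) is exactly what the paper does: its proof is a one-line citation to Theorems~5.1.8 and~5.1.9 of \cite{convexlecture}. So the safe route you identify is correct and matches the paper.

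However, your primary direct computation has a genuine flaw in the lower-bound direction. Taking Assumption~\ref{ass_self_concodant} with $z=x$ and swapping the roles of the first two arguments gives $\nabla^2 f(x) - \nabla^2 f(x_t) \preceq Mtr\,\nabla^2 f(x_t)$, which rearranges to $\nabla^2 f(x_t) \succeq \frac{1}{1+Mtr}\nabla^2 f(x)$, \emph{not} $(1-Mtr)\nabla^2 f(x)$. The linear form is of course a consequence of this (since $1-Mtr \le \frac{1}{1+Mtr}$), but it is strictly weaker, and it is too weak: the iterated integral of $r^2(1-Mr\tau)$ is the cubic polynomial $\frac{r^2}{2} - \frac{Mr^3}{6}$, which does \emph{not} "collapse to exactly" $\frac{4}{M^2}\omega(\frac{Mr}{2})$; one checks (e.g., by differentiating the difference) that $\frac{r^2}{2} - \frac{Mr^3}{6} < \frac{4}{M^2}\omega(\frac{Mr}{2})$ for all $r>0$, so integrating your linear bound lands strictly below the quantity the lemma asserts as a lower bound, and the conclusion does not follow. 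To make the direct route go through you must keep the sharper pointwise bound $\phi''(\tau) \ge \frac{r^2}{1+Mr\tau}$ (or the standard self-concordance bound $\frac{r^2}{(1+Mr\tau/2)^2}$, which it dominates); the identity $\int_0^1 (1-t)\,\frac{r^2}{(1+Mrt/2)^2}\,dt = \frac{4}{M^2}\omega(\frac{Mr}{2})$ then gives exactly the stated lower bound, with your sharper integrand giving something even larger. Interestingly, the upper-bound direction of your direct computation is fine as written: strong self-concordance does give $\phi''(\tau)\le r^2(1+Mr\tau)$ directly, and its integral $\frac{r^2}{2}+\frac{Mr^3}{6}$ is (one can check similarly) at most $\frac{4}{M^2}\omega_*(\frac{Mr}{2})$ on the admissible range, so your argument actually yields a tighter polynomial upper bound that implies the lemma's claim.
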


\begin{proof}
    Check Theorem 5.1.8 and Theorem 5.1.9 of \cite{convexlecture}.
\end{proof}

\begin{lemma}\label{lemma_Hessian}
    Suppose Assumptions~\ref{ass_self_concodant} holds, and recall the definitions of the matrices $J_t$ and $G_t$ in \eqref{def_J} and \eqref{def_G}, and the quantity $D_t$ in \eqref{suboptimality}. The following statements hold: 
    \begin{enumerate}[(a)]
        \item For any $t \geq 0$, we have that
        \begin{equation}\label{eq:H_t_vs_H*}
            \frac{1}{1 + D_t}\nabla^2{f(x_*)} \preceq \nabla^2{f(x_t)} \preceq (1 + D_t)\nabla^2{f(x_*)} .
        \end{equation}
        \item For any $t \geq 0$, we have that
        \begin{equation}\label{eq:G_t_vs_H*}
            \frac{1}{1 + D_t}\nabla^2{f(x_*)} \preceq G_t \preceq (1 + D_t)\nabla^2{f(x_*)} .
        \end{equation}
        \item For any $t \geq 0$ and $\tau \in [0, 1]$, we have that
        \begin{equation}\label{eq:G_t_vs_H_t}
            \frac{1}{1 + D_t}G_t \preceq \nabla^2{f(x_t + \tau(x_* - x_t))} \preceq (1 + D_t)G_t .
        \end{equation}
        \item Suppose that $f(x_{t + 1}) \leq f(x_t)$ for $t \geq 0$, we have that
        \begin{equation}\label{eq:J_t_vs_H*}
            \frac{1}{1 + D_t}\nabla^2{f(x_*)} \preceq J_t \preceq (1 + D_t)\nabla^2{f(x_*)} .
        \end{equation}
    \end{enumerate}
\end{lemma}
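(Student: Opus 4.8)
The plan is to reduce all four parts to one geometric estimate, which is then fed into the strong self-concordance inequality of Assumption~\ref{ass_self_concodant} with the free third reference point always chosen to be $x_*$. The single nontrivial observation is that using $x_*$ as this free point — rather than routing through $x_*$ as an intermediate point — is exactly what keeps every comparison constant equal to $1 + D_t$ instead of $(1 + D_t)^2$, and it is also what makes part~(c) go through.

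First I would establish the key distance bound: every $v \in \mathbb{R}^d$ with $f(v) \le f(x_t)$ satisfies $\|v - x_*\|_{x_*} \le D_t/M$. Applying the lower bound of Lemma~\ref{lemma_concordance} with base point $x_*$, where $\nabla f(x_*) = 0$, gives $f(v) - f(x_*) \ge \frac{4}{M^2}\omega\!\left(\frac{M}{2}\|v - x_*\|_{x_*}\right)$, so $\omega\!\left(\frac{M}{2}\|v - x_*\|_{x_*}\right) \le \frac{M^2}{4}\big(f(v) - f(x_*)\big) \le \frac{M^2}{4} C_t$; since $\omega^{-1}$ is increasing, $\frac{M}{2}\|v - x_*\|_{x_*} \le \omega^{-1}(M^2 C_t/4) = D_t/2$ by the definition of $D_t$ in \eqref{suboptimality}. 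Next I would record the conversion step: if $v$ satisfies $M\|v - x_*\|_{x_*} \le D_t$, then $\frac{1}{1 + D_t}\nabla^2 f(x_*) \preceq \nabla^2 f(v) \preceq (1 + D_t)\nabla^2 f(x_*)$; the upper bound is Assumption~\ref{ass_self_concodant} applied to the triple $(v, x_*, x_*)$, and the lower bound is the same inequality applied to $(x_*, v, x_*)$ and rearranged.

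With these two building blocks, parts~(a), (b) and~(d) follow by exhibiting the relevant point $v$ and checking $f(v) \le f(x_t)$: for~(a), $v = x_t$; for~(b), $v = x_t + \tau_2(x_* - x_t)$, for which in fact $v - x_* = (1 - \tau_2)(x_t - x_*)$, so $\|v - x_*\|_{x_*} \le \|x_t - x_*\|_{x_*} \le D_t/M$ directly from the distance bound applied to $x_t$; and for~(d), $v = x_t + \tau_1(x_{t+1} - x_t)$, where convexity of $f$ together with the hypothesis $f(x_{t+1}) \le f(x_t)$ of part~(d) gives $f(v) \le (1-\tau_1)f(x_t) + \tau_1 f(x_{t+1}) \le f(x_t)$. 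This is the only place where the extra hypothesis of part~(d) is used.

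Part~(c) is the step I expect to require the most care, since naively comparing $\nabla^2 f(x_t + \tau(x_* - x_t))$ to $G_t$ via $x_*$ would only produce a factor $(1 + D_t)^2$. Instead I would compare the two Hessians directly: writing $z_1 = x_t + \tau(x_* - x_t)$ and $w = x_t + \tau_2(x_* - x_t)$ (so $G_t = \nabla^2 f(w)$), we have $z_1 - w = (\tau - \tau_2)(x_* - x_t)$ with $|\tau - \tau_2| \le 1$, hence $\|z_1 - w\|_{x_*} = |\tau - \tau_2|\,\|x_t - x_*\|_{x_*} \le D_t/M$ by the distance bound. Then Assumption~\ref{ass_self_concodant} applied to $(z_1, w, x_*)$ gives $\nabla^2 f(z_1) - G_t \preceq D_t G_t$, and applied to $(w, z_1, x_*)$ gives $G_t - \nabla^2 f(z_1) \preceq D_t \nabla^2 f(z_1)$; together these are exactly the claimed sandwich $\frac{1}{1 + D_t} G_t \preceq \nabla^2 f(z_1) \preceq (1 + D_t) G_t$. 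The remaining details are routine; the only genuine point of care throughout is to keep $x_*$ as the free reference point in the self-concordance inequality so that all constants stay at $1 + D_t$.
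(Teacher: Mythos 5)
Your proof is correct and follows essentially the same route as the paper: derive the distance bound $\|v-x_*\|_{x_*}\le D_t/M$ from the self-concordance lower bound on $f$, then feed it into Assumption~\ref{ass_self_concodant} with $x_*$ kept as the reference point, and for part (c) compare $\nabla^2 f(z_1)$ to $G_t$ directly rather than chaining through $\nabla^2 f(x_*)$. The only cosmetic departures are that you package the distance bound as a single sublevel-set statement and handle part (d) via convexity of $f$ ($f(v)\le(1-\tau_1)f(x_t)+\tau_1 f(x_{t+1})\le f(x_t)$) instead of the paper's triangle-inequality decomposition of $\|v-x_*\|_{x_*}$; both yield the same estimate.
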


\begin{proof}

    Proof for (a): In Lemma~\ref{lemma_concordance}, take $y = x_t$ and $x = x_*$, we have that $\omega(\frac{M}{2}\|x_t - x_*\|_{x_*}) \leq \frac{M^2}{4}(f(x_t) - f(x_*))$. Hence, we have that
    \begin{align*}
        \|x_t - x_*\|_{x_*} \leq \frac{2}{M}\omega^{-1}(\frac{M^2}{4}C_t).
    \end{align*}
    In Assumptions~\ref{ass_self_concodant}, take $x = x_t$, $y = x_*$ and $z = x_*$, we prove that 
    \begin{equation}
        \nabla^2{f(x_t)} \preceq (1 + M\|x_t - x_*\|_{x_*})\nabla^2{f(x_*)} \preceq (1 + 2\omega^{-1}(\frac{M^2}{4}C_t))\nabla^2{f(x_*)} = (1 + D_t)\nabla^2{f(x_*)}.
    \end{equation}
    Similarly, take $x = x_*$, $y = x_t$, $z = x_*$ and $w = x_t$, we prove that 
    \begin{equation}
        \nabla^2{f(x_t)} \succeq \frac{1}{1 + M\|x_t - x_*\|_{x_*}}\nabla^2{f(x_*)} \succeq \frac{1}{1 + 2\omega^{-1}(\frac{M^2}{4}C_t)}\nabla^2{f(x_*)} = \frac{1}{1 + D_t}\nabla^2{f(x_*)}.
    \end{equation}

    Proof for (b): Recall the definition of $G_t$ in \eqref{def_G}. Notice that $\|x_t + \tau_2 (x_{*} - x_t) - x_*\|_{x_*} = (1 - \tau_2)\|x_t - x_*\|_{x_*} \leq \|x_t - x_*\|_{x_*} \leq \frac{2}{M}\omega^{-1}(\frac{M^2}{4}C_t)$. Similar to the arguments in (a), in Assumptions~\ref{ass_self_concodant}, take $x = x_t + \tau_2 (x_{*} - x_t)$, $y = x_*$ and $z = x_*$, we prove that 
    \begin{equation}
    \begin{split}
        & G_t \preceq (1 + M\|x_t + \tau (x_{*} - x_t) - x_*\|_{x_*})\nabla^2{f(x_*)} \\
        & \preceq (1 + 2\omega^{-1}(\frac{M^2}{4}C_t))\nabla^2{f(x_*)} = (1 + D_t)\nabla^2{f(x_*)}.
    \end{split}
    \end{equation}
    Similarly, take $x = x_*$, $y = x_t + \tau_2 (x_{*} - x_t)$ and $z = x_*$, we prove that 
    \begin{equation}
    \begin{split}
        & G_t \succeq \frac{1}{1 + M\|x_t + \tau (x_{*} - x_t) - x_*\|_{x_*}}\nabla^2{f(x_*)} \\
        & \succeq \frac{1}{1 + 2\omega^{-1}(\frac{M^2}{4}C_t)}\nabla^2{f(x_*)} = \frac{1}{1 + D_t}\nabla^2{f(x_*)}.
    \end{split}
    \end{equation}
    Proof for (c): Notice that $\|x_t + \tau (x_{*} - x_t) - (x_t + \tau_2 (x_{*} - x_t))\|_{x_*} = |\tau - \tau_2|\|x_t - x_*\|_{x_*} \leq \|x_t - x_*\|_{x_*} \leq \frac{2}{M}\omega^{-1}(\frac{M^2}{4}C_t)$. Similar to the arguments in (a), in Assumptions~\ref{ass_self_concodant}, take $x = x_t + \tau (x_{*} - x_t)$, $y = x_t + \tau_2 (x_{*} - x_t)$ and $z = x_*$, we prove that 
    \begin{equation}
    \begin{split}
        & \nabla^2{f(x_t + \tau(x_* - x_t))}  \preceq (1 + M\|x_t + \tau (x_{*} - x_t) - x_*\|_{x_*})G_t \\
        & \preceq (1 + 2\omega^{-1}(\frac{M^2}{4}C_t))G_t = (1 + D_t)G_t.
    \end{split}
    \end{equation}
    Similarly, take $x = x_t + \tau_2 (x_{*} - x_t)$, $y = x_t + \tau (x_{*} - x_t)$ and $z = x_*$, we prove that 
    \begin{equation}
    \begin{split}
        & \nabla^2{f(x_t + \tau(x_* - x_t))}  \succeq \frac{1}{1 + M\|x_t + \tau (x_{*} - x_t) - x_*\|_{x_*}}G_t \\
        & \succeq \frac{1}{1 + 2\omega^{-1}(\frac{M^2}{4}C_t)}G_t = \frac{1}{1 + D_t}G_t.
    \end{split}
    \end{equation}
    Proof for (d): Recall the definition of $J_t$ in \eqref{def_J}. Notice that $\|x_t + \tau (x_{t + 1} - x_t) - x_*\|_{x_*} = (1 - \tau)\|x_t - x_*\|_{x_*} + \tau \|x_{t + 1} - x_*\|_{x_*} \leq (1 - \tau)\frac{2}{M}\omega^{-1}(\frac{M^2}{4}C_t) + \tau \frac{2}{M}\omega^{-1}(\frac{M^2}{4}C_t) \leq \frac{2}{M}\omega^{-1}(\frac{M^2}{4}C_t)$ where the last inequality holds since $f(x_{t + 1}) \leq f(x_t)$ and $\omega^{-1}(x)$ is increasing function. Hence, similar to the arguments in (a), in Assumptions~\ref{ass_self_concodant}, take $x = x_t + \tau (x_{t + 1} - x_t)$, $y = x_*$ and $z = x_*$, we prove that 
    \begin{equation}
    \begin{split}
        & J_t \preceq (1 + M\|x_t + \tau (x_{t + 1} - x_t) - x_*\|_{x_*})\nabla^2{f(x_*)} \\
        & \preceq (1 + 2\omega^{-1}(\frac{M^2}{4}C_t))\nabla^2{f(x_*)} = (1 + D_t)\nabla^2{f(x_*)}.
    \end{split}
    \end{equation}
    Similarly, take $x = x_*$, $y = x_t + \tau (x_{*} - x_t)$ and $z = x_*$, we prove that 
    \begin{equation}
    \begin{split}
        & J_t \succeq \frac{1}{1 + M\|x_t + \tau (x_{t + 1} - x_t) - x_*\|_{x_*}}\nabla^2{f(x_*)} \\
        & \succeq \frac{1}{1 + 2\omega^{-1}(\frac{M^2}{4}C_t)}\nabla^2{f(x_*)} = \frac{1}{1 + D_t}\nabla^2{f(x_*)}.
    \end{split}
    \end{equation}
\end{proof}

\begin{proposition}\label{lemma_bound}
    Let $\{x_t\}_{t\geq 0}$ be the iterates generated by BFGS. Recall the definitions of weighted vectors in \eqref{weighted_vector} and notations in \eqref{eq:def_alpha_q_m}. Then, for any weight matrix $P \in \mathbb{S}^{d}_{++}$ and any $t \geq 1$, we have
    \begin{equation}\label{eq:product}
        \frac{f(x_{t}) - f(x_*)}{f(x_0) - f(x_*)} \leq \bigg(1 - \bigg(\prod_{i = 0}^{t - 1} \hat{p}_i \hat{q}_{i} \hat{n}_i\frac{\cos^2(\hat{\theta}_{i})}{\hat{m}_{i}}\bigg)^{\frac{1}{t}}\bigg)^{t}.
    \end{equation}  
\end{proposition}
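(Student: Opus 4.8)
The plan is to reduce the claimed bound to a clean per-iteration identity and then invoke the AM--GM inequality. We may assume $f(x_t)-f(x_*)>0$, since otherwise the left-hand side of \eqref{eq:product} is zero and the bound is trivial; by Lemma~\ref{lemma_line_search} the sequence $f(x_i)$ is nonincreasing, so $f(x_i)-f(x_*)>0$ for all $i\le t$, and in particular $g_i\neq 0$ and $s_i\neq 0$. First I would note that since $s_t=\eta_t d_t$ with $d_t=-B_t^{-1}g_t$ we have $-\hat g_t^\top\hat s_t=-g_t^\top s_t=\eta_t\, g_t^\top B_t^{-1}g_t>0$, so none of the quantities in \eqref{eq:theta} and \eqref{eq:def_alpha_q_m} involve division by zero. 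Substituting the definitions of $\hat n_t$, $\cos^2(\hat\theta_t)$, and $\hat m_t$, I expect the cancellation
\[
\hat n_t\,\frac{\cos^2(\hat\theta_t)}{\hat m_t}
=\frac{\hat y_t^\top\hat s_t}{-\hat g_t^\top\hat s_t}\cdot\frac{(\hat g_t^\top\hat s_t)^2}{\|\hat g_t\|^2\|\hat s_t\|^2}\cdot\frac{\|\hat s_t\|^2}{\hat y_t^\top\hat s_t}
=\frac{-\hat g_t^\top\hat s_t}{\|\hat g_t\|^2},
\]
and then, unwinding the definitions of $\hat p_t$ and $\hat q_t$, the \emph{exact} relation
$f(x_t)-f(x_{t+1})=\hat p_t\,(-\hat g_t^\top\hat s_t)=\hat p_t\hat n_t\frac{\cos^2(\hat\theta_t)}{\hat m_t}\|\hat g_t\|^2=\big(\hat p_t\hat q_t\hat n_t\frac{\cos^2(\hat\theta_t)}{\hat m_t}\big)(f(x_t)-f(x_*))$.

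Writing $r_t:=\hat p_t\hat q_t\hat n_t\cos^2(\hat\theta_t)/\hat m_t$, this rearranges to $f(x_{t+1})-f(x_*)=(1-r_t)(f(x_t)-f(x_*))$. Next I would verify $r_t\in[0,1)$: $\hat q_t\ge 0$ and $\cos^2(\hat\theta_t)\ge 0$ are immediate; $\hat m_t=\hat y_t^\top\hat s_t/\|\hat s_t\|^2>0$ because the curvature condition \eqref{curvature_condition} forces $\hat y_t^\top\hat s_t=y_t^\top s_t>0$; $\hat p_t\ge\alpha>0$ and $\hat n_t\ge 1-\beta>0$ by Lemma~\ref{lemma_line_search}; and $r_t<1$ follows from $f(x_{t+1})\ge f(x_*)$. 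Telescoping the per-step identity over $i=0,\dots,t-1$ then gives $\frac{f(x_t)-f(x_*)}{f(x_0)-f(x_*)}=\prod_{i=0}^{t-1}(1-r_i)$.

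Finally, to recover the stated form it suffices to show $\prod_{i=0}^{t-1}(1-r_i)\le\big(1-(\prod_{i=0}^{t-1}r_i)^{1/t}\big)^t$, which after taking $t$-th roots is the classical inequality $(\prod_{i}r_i)^{1/t}+(\prod_{i}(1-r_i))^{1/t}\le 1$ for $r_i\in[0,1]$. I would prove this by applying AM--GM separately to $\{r_i\}$ and to $\{1-r_i\}$, namely $(\prod_i r_i)^{1/t}\le\frac1t\sum_i r_i$ and $(\prod_i(1-r_i))^{1/t}\le\frac1t\sum_i(1-r_i)=1-\frac1t\sum_i r_i$, and summing the two. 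Combining with the telescoped product yields \eqref{eq:product}.

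There is no serious obstacle here: the argument is an algebraic identity followed by AM--GM. The only points requiring care are (i) making sure the $P$-weighting does not break the identity of the first step — the inner products $\hat g_t^\top\hat s_t$ and $\hat y_t^\top\hat s_t$ equal the unweighted $g_t^\top s_t$ and $y_t^\top s_t$, while $\|\hat g_t\|^2$, $\|\hat s_t\|^2$, $\hat m_t$, $\hat q_t$, $\cos^2(\hat\theta_t)$ each depend on $P$, and one must confirm they recombine into the $P$-free expression $-\hat g_t^\top\hat s_t/\|\hat g_t\|^2$; and (ii) confirming $\hat p_t,\hat n_t,\hat m_t>0$ and $r_t<1$ (via Lemma~\ref{lemma_line_search} and the curvature condition) so that telescoping and the AM--GM step are legitimate.
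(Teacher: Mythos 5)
Your derivation is correct and is the standard argument: the per-iteration algebraic identity $f(x_i)-f(x_{i+1}) = r_i\,(f(x_i)-f(x_*))$ (which is exact, with the cancellation of the $P$-dependence in $\hat n_i\cos^2(\hat\theta_i)/\hat m_i$ checked correctly), the telescoping $\frac{f(x_t)-f(x_*)}{f(x_0)-f(x_*)}=\prod_{i=0}^{t-1}(1-r_i)$, and then AM--GM applied to $\{r_i\}$ and $\{1-r_i\}$ to pass from the product of $(1-r_i)$ to $\bigl(1-(\prod r_i)^{1/t}\bigr)^t$. The paper itself does not spell this out but simply cites Proposition~1 of \cite{jin2024non}; your self-contained proof matches that approach. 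One small imprecision: $f(x_{i+1})\geq f(x_*)$ gives $r_i\leq 1$, not $r_i<1$ (with $i<t-1$ giving strict inequality by monotonicity), but AM--GM needs only $r_i\in[0,1]$ and equality $r_{t-1}=1$ forces the left-hand side to vanish, so the bound is unaffected.
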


\begin{proof}
Please check Proposition 1 in \cite{jin2024non}
\end{proof}

\begin{lemma}\label{lemma_g}
    Suppose Assumption~\ref{ass_self_concodant} holds for the convex objective function $f(x)$ and recall the definition $\tilde{g}_t$ in \eqref{definition_g} and $D_t$ in \eqref{suboptimality}. We have the following condition,
    \begin{equation}
        \frac{\|\tilde{g}_t\|^2}{f(x_t) - f(x_*)} \geq \frac{1}{1 + D_t}, \qquad \forall t \geq 0.
    \end{equation}
\end{lemma}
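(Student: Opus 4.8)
# Proof Proposal for Lemma~\ref{lemma_g}

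\textbf{Overall approach.} The plan is to reduce the quantity $\|\tilde{g}_t\|^2/(f(x_t)-f(x_*))$ to a statement about the Hessian matrices $\nabla^2 f(x_*)$ and $G_t$, and then invoke Lemma~\ref{lemma_Hessian}(b) to control the distortion between them. Recall from the Appendix notation that $\tilde{g}_t = \nabla^2 f(x_*)^{-1/2} g_t$ and that by the mean value theorem $g_t = \nabla f(x_t) - \nabla f(x_*) = G_t(x_t - x_*)$ for some $\tau_2 \in [0,1]$, where $G_t = \nabla^2 f(x_t + \tau_2(x_* - x_t))$. So the numerator becomes $\|\tilde g_t\|^2 = (x_t - x_*)^\top G_t \nabla^2 f(x_*)^{-1} G_t (x_t - x_*)$.

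\textbf{Key steps.} First, I would handle the denominator. Since $f$ is convex and $\nabla f(x_*) = 0$, a standard identity (or again the mean value theorem in integral form, or convexity combined with the same $G_t$-type expansion) gives $f(x_t) - f(x_*) \le \frac12 (x_t - x_*)^\top G_t (x_t - x_*)$ — actually more carefully, using $f(x_t) - f(x_*) = \int_0^1 (1-s)(x_t-x_*)^\top \nabla^2 f(x_* + s(x_t - x_*))(x_t - x_*)\,ds$ and bounding the integrand via Lemma~\ref{lemma_Hessian}. Alternatively, and more cleanly in this framework, I would write $f(x_t) - f(x_*) \le g_t^\top(x_t - x_*)$ by convexity, and then $g_t^\top(x_t-x_*) = (x_t-x_*)^\top G_t(x_t-x_*)$. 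Second, for the numerator, set $u := \nabla^2 f(x_*)^{-1/2} G_t (x_t - x_*)$ so that $\|\tilde g_t\|^2 = \|u\|^2$ and $g_t^\top(x_t - x_*) = (x_t - x_*)^\top G_t (x_t-x_*) = u^\top \nabla^2 f(x_*) \cdot (\text{something})$ — I need to be careful; better to write everything in terms of $v := \nabla^2 f(x_*)^{1/2}(x_t - x_*)$ and the matrix $\Gamma_t := \nabla^2 f(x_*)^{-1/2} G_t \nabla^2 f(x_*)^{-1/2}$, which by Lemma~\ref{lemma_Hessian}(b) satisfies $\frac{1}{1+D_t} I \preceq \Gamma_t \preceq (1+D_t) I$. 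Then $\|\tilde g_t\|^2 = v^\top \Gamma_t^2 v$ and $g_t^\top (x_t - x_*) = v^\top \Gamma_t v$, so the ratio is at least $v^\top \Gamma_t^2 v / v^\top \Gamma_t v \ge \lambda_{\min}(\Gamma_t) \ge \frac{1}{1+D_t}$, using that $\Gamma_t^2 \succeq \lambda_{\min}(\Gamma_t)\Gamma_t$. Combining with $f(x_t) - f(x_*) \le g_t^\top(x_t-x_*)$ yields the claim.

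\textbf{Main obstacle.} The only subtle point is justifying the denominator bound $f(x_t) - f(x_*) \le g_t^\top(x_t - x_*)$ purely from convexity of $f$ (this is just the subgradient inequality at $x_t$ applied to $x_*$, i.e. $f(x_*) \ge f(x_t) + g_t^\top(x_* - x_t)$), which is immediate — so there is no real obstacle here, and the argument is short. The one thing to watch is that $\Gamma_t$ and $v$ share the same eigenbasis argument only via the spectral inequality $\Gamma_t^2 \succeq \lambda_{\min}(\Gamma_t)\,\Gamma_t$, which holds because $\Gamma_t \succ 0$; writing $\Gamma_t = \sum_i \lambda_i w_i w_i^\top$ makes it transparent. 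I expect the whole proof to be three or four lines once the substitution is set up.
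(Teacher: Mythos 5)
Your argument is correct and is essentially the same as the paper's: both proofs start from the convexity inequality $f(x_t)-f(x_*)\le g_t^\top(x_t-x_*)$, write $g_t = G_t(x_t-x_*)$ via the mean value theorem, and then invoke Lemma~\ref{lemma_Hessian}(b) to compare $G_t$ with $\nabla^2 f(x_*)$ — the paper phrases this last step as $G_t^{-1}\preceq(1+D_t)\nabla^2 f(x_*)^{-1}$ so that $g_t^\top G_t^{-1}g_t\le(1+D_t)\|\tilde g_t\|^2$, while you conjugate by $\nabla^2 f(x_*)^{1/2}$ and use $\Gamma_t^2\succeq\lambda_{\min}(\Gamma_t)\Gamma_t$, which is algebraically the identical inequality in different variables.
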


\begin{proof}
    Since $f$ is convex, we know that for any $x, y \in \mathbb{R}^d$, we have $f(y) \geq f(x) + g(x)^\top(y - x)$. Take  $x = x_t$ and $y = x_*$, we obtain that
    \begin{equation*}
        f(x_t) - f(x_*) \leq g_t^\top(x_t - x_*).
    \end{equation*}
    Using mean value theorem and the fact that $\nabla{f(x_*)} = 0$, we have that
    \begin{equation*}
        g_t = \nabla{f(x_t)} = \nabla{f(x_t)} - \nabla{f(x_*)} = G_t(x_t - x_*),
    \end{equation*}
    where $G_t$ is defined in \eqref{def_G} for some $\tau_2 \in [0, 1]$. Hence, we prove that
    \begin{equation*}
    \begin{split}
        & f(x_t) - f(x_*) \leq g_t^\top(x_t - x_*) = g_t^\top G_t^{-1}g_t \\
        & \leq \left(1 + 2\omega^{-1}(\frac{M^2}{4}C_t)\right)g_t^\top \nabla^2{f(x_*)}^{-1}g_t = \left(1 + D_t\right)\|\tilde{g}_t\|^2,
    \end{split}
    \end{equation*}
    where we use the result in \eqref{eq:G_t_vs_H*} from Lemma~\ref{lemma_Hessian}.
\end{proof}

\begin{lemma}\label{lemma_eigenvalue}
    Suppose Assumption~\ref{ass_self_concodant} holds for the convex objective function $f(x)$ and recall the definition $\tilde{g}_t$ in \eqref{definition_g} and $D_t$ in \eqref{suboptimality}. We have the following condition,
    \begin{equation}
        \frac{2}{(1 + D_t)^2} \leq \frac{\|\tilde{g}_t\|^2}{f(x_t) - f(x_*)} \leq 2(1 + D_t)^2, \qquad \forall t \geq 0.
    \end{equation}
\end{lemma}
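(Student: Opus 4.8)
The plan is to prove the two inequalities separately, in each case comparing both $f(x_t)-f(x_*)$ and $\|\tilde{g}_t\|^2 = g_t^\top \nabla^2 f(x_*)^{-1} g_t$ to the error $\|x_t-x_*\|_{x_*}^2$, using the Hessian sandwich bounds of Lemma~\ref{lemma_Hessian} together with the strong self-concordance of $f$. Write $H_* := \nabla^2 f(x_*)$, $r := x_t - x_*$, and $L := 1+D_t$. I will use that the proof of Lemma~\ref{lemma_Hessian}(b) in fact establishes $\tfrac{1}{L} H_* \preceq \nabla^2 f(x_*+sr) \preceq L H_*$ for \emph{every} $s\in[0,1]$, and hence also $\tfrac{1}{L} H_* \preceq \bar{G}_t \preceq L H_*$ for the averaged Hessian $\bar{G}_t := \int_0^1 \nabla^2 f(x_*+sr)\,ds$, for which $g_t = \nabla f(x_t)-\nabla f(x_*) = \bar{G}_t r$ holds exactly by the fundamental theorem of calculus.

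\textbf{Lower bound.} Taylor-expanding $f$ around $x_t$ with integral remainder, evaluating at $x_*$, and using $\nabla^2 f \succeq \tfrac{1}{L} H_*$ along $[x_t,x_*]$ gives $f(x_*) \ge f(x_t) + g_t^\top(x_*-x_t) + \tfrac{1}{2L}\|x_t-x_*\|_{x_*}^2$, i.e. $f(x_t)-f(x_*) \le g_t^\top r - \tfrac{1}{2L}\|r\|_{x_*}^2$. Since $g_t^\top r = \tilde{g}_t^\top H_*^{1/2} r \le \|\tilde{g}_t\|\,\|r\|_{x_*}$ by Cauchy--Schwarz, I would set $b := \|r\|_{x_*}$ and maximize the scalar quadratic $\|\tilde{g}_t\|\,b - \tfrac{1}{2L}b^2$ over $b \ge 0$, obtaining $f(x_t)-f(x_*) \le \tfrac{L}{2}\|\tilde{g}_t\|^2$. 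This yields $\|\tilde{g}_t\|^2 / (f(x_t)-f(x_*)) \ge 2/L \ge 2/(1+D_t)^2$.

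\textbf{Upper bound.} Here I would imitate the classical estimate $f(x)-f_* \le \tfrac{1}{2\mu}\|\nabla f(x)\|_*^2$: set $y := x_t - \tfrac{1}{L} H_*^{-1} g_t$ and use the smoothness inequality $f(y) \le f(x_t) + g_t^\top(y-x_t) + \tfrac{L}{2}\|y-x_t\|_{x_*}^2$, which after substituting $y-x_t = -\tfrac{1}{L} H_*^{-1}g_t$ collapses to $f(y) \le f(x_t) - \tfrac{1}{2L}\|\tilde{g}_t\|^2$; combining with $f(x_*) \le f(y)$ gives $\|\tilde{g}_t\|^2 \le 2L(f(x_t)-f(x_*)) \le 2(1+D_t)^2(f(x_t)-f(x_*))$. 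The one nontrivial point, and the main obstacle, is to justify that smoothness inequality, since Lemma~\ref{lemma_Hessian} only controls $\nabla^2 f$ along $[x_t,x_*]$, not along $[x_t,y]$. To handle this I would first show $\|y-x_*\|_{x_*} \le \|x_t-x_*\|_{x_*}$: from $g_t = \bar{G}_t r$ one gets $H_*^{1/2}(y-x_*) = \big(I - \tfrac{1}{L} H_*^{-1/2}\bar{G}_t H_*^{-1/2}\big)H_*^{1/2} r$, and since the eigenvalues of $H_*^{-1/2}\bar{G}_t H_*^{-1/2}$ lie in $[\tfrac{1}{L}, L]$, the matrix $I - \tfrac{1}{L} H_*^{-1/2}\bar{G}_t H_*^{-1/2}$ is positive semidefinite with operator norm at most $1 - \tfrac{1}{L^2} < 1$, so $\|y-x_*\|_{x_*} \le \|H_*^{1/2}r\| = \|x_t-x_*\|_{x_*}$. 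Then every $z$ on the segment $[x_t,y]$ satisfies $\|z-x_*\|_{x_*} \le \|x_t-x_*\|_{x_*}$ by the triangle inequality, and applying Assumption~\ref{ass_self_concodant} with the third argument chosen as $x_*$ gives $\nabla^2 f(z) \preceq (1 + M\|z-x_*\|_{x_*})H_* \preceq (1 + M\|x_t-x_*\|_{x_*})H_* \preceq (1+D_t)H_* = L H_*$, where the final bound uses $M\|x_t-x_*\|_{x_*} \le 2\omega^{-1}(\tfrac{M^2}{4}C_t) = D_t$ exactly as in the proof of Lemma~\ref{lemma_Hessian}(a). Validity of the smoothness inequality along $[x_t,y]$ then follows from Taylor's formula with integral remainder, which closes the argument.
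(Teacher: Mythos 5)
Your proof is correct, but it takes a genuinely different route from the paper's, and the comparison is instructive.

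The paper's proof is a one-shot sandwiching argument. It writes the \emph{exact} identity
\begin{equation*}
f(x_t) - f(x_*) \;=\; \tfrac{1}{2}\,g_t^\top G_t^{-1}\,\nabla^2 f\bigl(x_t + \tilde\tau_t(x_*-x_t)\bigr)\,G_t^{-1} g_t,
\end{equation*}
obtained by combining Taylor's formula with Lagrange remainder for $f$ with the relation $g_t = G_t(x_t-x_*)$, and then bounds the middle operator $G_t^{-1}\nabla^2 f(\cdot)\,G_t^{-1}$ between $\frac{1}{(1+D_t)^2}H_*^{-1}$ and $(1+D_t)^2 H_*^{-1}$ using parts (b) and (c) of Lemma~\ref{lemma_Hessian}. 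Both sides of the inequality drop out of the same identity in two lines each. Your argument, by contrast, treats the two bounds asymmetrically: the lower bound is a curvature-based descent inequality along $[x_t,x_*]$ followed by a Cauchy--Schwarz and a scalar quadratic maximization; the upper bound is a Polyak--\L ojasiewicz-type estimate at the probe point $y = x_t - \tfrac{1}{L}H_*^{-1}g_t$, for which you need (and correctly supply) the extra observation that $y$ stays within the ball $\{z:\|z-x_*\|_{x_*}\le\|x_t-x_*\|_{x_*}\}$ on which the self-concordance Hessian bound is available. That extra step is the real work your proof does that the paper's does not.

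Two things your route buys. First, your constants are sharper: you get $\tfrac{2}{1+D_t}\le \|\tilde g_t\|^2/(f(x_t)-f(x_*))\le 2(1+D_t)$, whereas the paper proves the weaker $\tfrac{2}{(1+D_t)^2}$ and $2(1+D_t)^2$. Since $1+D_t\ge 1$ your bounds imply the lemma as stated. Second, you use the integral form of the fundamental theorem of calculus (the averaged Hessian $\bar G_t$) rather than the pointwise ``mean value'' form $g_t = G_t(x_t-x_*)$ for a single $\tau_2$ that the paper invokes; the pointwise form does not generally hold for vector fields, so your version is the more careful one. (The paper's Lemma~\ref{lemma_Hessian}(b) does hold for every $\tau_2\in[0,1]$, and therefore also for the integral average by convexity of the PSD cone, exactly as you note, so the paper's final bounds are correct even though the intermediate ``mean value'' step is stated loosely.) The cost is length: the paper's identity-plus-sandwich is shorter and yields both inequalities at once, while your probe-point argument for the upper bound requires the eigenvalue computation showing $\|y-x_*\|_{x_*}\le\|x_t-x_*\|_{x_*}$.
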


\begin{proof}    
    By applying Taylor's theorem with Lagrange remainder, there exists $\tilde{\tau}_t \in [0,1]$ such that 
    \begin{equation}\label{lemma_eigenvalue_proof_1}
        \begin{split}
            f(x_t) & = f(x_*) + \nabla{f(x_*)}^\top (x_t - x_*) + \frac{1}{2}(x_t - x_*)^\top \nabla^2{f(x_t + \tilde{\tau}_t(x_* - x_t))}(x_t - x_*) \\
            & = f(x_*) + \frac{1}{2}(x_t - x_*)^\top \nabla^2{f(x_t + \tilde{\tau}_t(x_* - x_t))}(x_t - x_*),
    \end{split}
    \end{equation}
    where we used the fact that $\nabla{f(x_*)} = 0$ in the last equality. Moreover, by the fundamental theorem of calculus, we have
    \begin{equation*}
        \nabla{f(x_t)} - \nabla{f(x_*)} = \nabla^2{f(x_t + \tau (x_{*} - x_t))} (x_t - x_*) = G_t(x_t - x^*),
    \end{equation*}
    where we use the definition of $G_t$ in \eqref{def_G}. Since $\nabla f(x_*) = 0$ and we denote $g_t = \nabla{f(x_t)}$, this further implies that
    \begin{equation}\label{lemma_eigenvalue_proof_2}
        x_t - x_* = G_t^{-1}(\nabla{f(x_t)} - \nabla{f(x_*)}) = G_t^{-1} g_t.
    \end{equation}
    Combining \eqref{lemma_eigenvalue_proof_1} and \eqref{lemma_eigenvalue_proof_2} leads to 
    \begin{equation}\label{eq:f_x_k-f_*}
        f(x_t) - f(x_*) = \frac{1}{2}g_t^\top G_t^{-1}\nabla^2{f(x_t + \tilde{\tau}_t(x_* - x_t))}G_t^{-1}g_t. 
    \end{equation}
    Based on \eqref{eq:G_t_vs_H_t} in Lemma~\ref{lemma_Hessian}, we have $\nabla^2{f(x_t + \tilde{\tau}_t (x_{*} - x_{t}))} \preceq (1 + D_t)G_t$, which implies
    \begin{equation}\label{lemma_eigenvalue_proof_3}
        G_t^{-1}\nabla^2{f(x_t + \tilde{\tau}_t (x_{*} - x_{t}))}G_t^{-1} \preceq (1 + D_t)^2G_t^{-1}.
    \end{equation}
    Moreover, it follows from \eqref{eq:G_t_vs_H*} in Lemma~\ref{lemma_Hessian} that $\frac{1}{1 + D_t} \nabla^2{f(x_*)} \preceq G_t$, which implies that
    \begin{equation}\label{lemma_eigenvalue_proof_4}
        G_t^{-1} \preceq (1 + D_t)(\nabla^2{f(x_*)})^{-1}.
    \end{equation}
    Combining \eqref{lemma_eigenvalue_proof_3} and \eqref{lemma_eigenvalue_proof_4}, we obtain that
    \begin{equation*}
        G_t^{-1}\nabla^2{f(x_t + \tilde{\tau}_t (x_{*} - x_{t}))}G_t^{-1} \preceq (1 + D_t)^2(\nabla^2{f(x_*)})^{-1},
    \end{equation*}
    and hence 
    \begin{equation*}
        g_t^\top G_t^{-1}\nabla^2{f(x_t + \tilde{\tau}_t (x_{*} - x_{t}))}G_t^{-1}g_t \leq (1 + D_t)^2 g_t^\top(\nabla^2{f(x_*)})^{-1} g_t.
    \end{equation*}
    By using \eqref{eq:f_x_k-f_*} and the fact that $g_t^\top(\nabla^2{f(x_*)})^{-1} g_t = \|\tilde{g}_t\|^2$, we obtain that
    \begin{equation*}
        \frac{\|\tilde{g}_t\|^2}{f(x_t)-f(x_*)} \geq \frac{2}{(1 + D_t)^2},
    \end{equation*}
    and the left claim follows. Using the similar method, we can prove the right claim. 
\end{proof}

\begin{lemma}\label{lemma_decrease}
    Suppose Assumption~\ref{ass_self_concodant} holds and $C_t \leq \min\{\frac{1}{16}, \frac{4}{M^2}\omega(\frac{1}{32})\}$ and $\rho_t \geq \frac{15}{16}$ at iteration $t$, then we have that
    \begin{equation}
        f(x_t + d_t) \leq f(x_t).
    \end{equation}
\end{lemma}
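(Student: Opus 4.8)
The plan is to apply the upper bound from Lemma~\ref{lemma_concordance} with $x = x_t$ and $y = x_t + d_t$, so the increment $y - x = d_t$. This gives
$f(x_t + d_t) \leq f(x_t) + g_t^\top d_t + \frac{4}{M^2}\omega_*\!\left(\frac{M}{2}\|d_t\|_{x_t}\right)$,
valid provided $\|d_t\|_{x_t} < \frac{2}{M}$; I will need to verify this admissibility condition along the way. To control the quadratic term I would pass to the Hessian at $x_*$: by Lemma~\ref{lemma_Hessian}(a), $\nabla^2 f(x_t) \preceq (1 + D_t)\nabla^2 f(x_*)$, so $\|d_t\|_{x_t}^2 \leq (1 + D_t)\|\tilde{d}_t\|^2$ where $\tilde{d}_t = \nabla^2 f(x_*)^{1/2} d_t$ as in \eqref{definition_rho}. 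Since $C_t \leq \frac{1}{16}$ is small, $D_t = 2\omega^{-1}(M^2 C_t/4)$ is bounded by a small constant (using Lemma~\ref{lemma_omega}(e) to estimate $\omega^{-1}$), so $1 + D_t$ is close to $1$; combined with $C_t \leq \frac{4}{M^2}\omega(1/32)$ this should force $\|d_t\|_{x_t}$ well below $\frac{2}{M}$, in fact small enough that $\frac{M}{2}\|d_t\|_{x_t} \leq \frac{1}{32}$ or so, which is where the threshold $\frac{4}{M^2}\omega(1/32)$ in the hypothesis comes from.

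Next I would use the definition of $\rho_t = \frac{-g_t^\top d_t}{\|\tilde{d}_t\|^2}$ to rewrite $g_t^\top d_t = -\rho_t \|\tilde{d}_t\|^2$, so the bound becomes
$f(x_t + d_t) - f(x_t) \leq -\rho_t \|\tilde{d}_t\|^2 + \frac{4}{M^2}\omega_*\!\left(\frac{M}{2}\|d_t\|_{x_t}\right)$.
For the error term I would apply Lemma~\ref{lemma_omega}(d): when the argument is in $(0,1)$, $\omega_*(a) \leq \frac{a^2}{2(1-a)}$, hence $\frac{4}{M^2}\omega_*(\frac{M}{2}\|d_t\|_{x_t}) \leq \frac{4}{M^2} \cdot \frac{(M/2)^2\|d_t\|_{x_t}^2}{2(1 - \frac{M}{2}\|d_t\|_{x_t})} = \frac{\|d_t\|_{x_t}^2}{2(1 - \frac{M}{2}\|d_t\|_{x_t})}$. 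Bounding $\|d_t\|_{x_t}^2 \leq (1 + D_t)\|\tilde{d}_t\|^2$ and using that $\frac{M}{2}\|d_t\|_{x_t}$ is at most a small constant (so the denominator is at least, say, $\frac{15}{16}$ or closer to $1$), the error term is at most $c(1 + D_t)\|\tilde{d}_t\|^2$ for an explicit constant $c$ slightly above $\frac{1}{2}$. The desired inequality $f(x_t + d_t) \leq f(x_t)$ then reduces to checking $\rho_t \geq c(1 + D_t)$; since $\rho_t \geq \frac{15}{16}$ by hypothesis and $c(1 + D_t)$ can be pushed below $\frac{15}{16}$ by taking $C_t$ small (which the hypothesis $C_t \leq \frac{4}{M^2}\omega(1/32)$ ensures, as this controls both $D_t$ and the denominator loss simultaneously), the claim follows.

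The main obstacle I anticipate is the bookkeeping of constants: I must choose the threshold on $C_t$ (and hence on $D_t$ and on $\frac{M}{2}\|d_t\|_{x_t}$) so that simultaneously (i) the argument of $\omega_*$ stays in $(0,1)$ so Lemma~\ref{lemma_omega}(d) applies, (ii) the factor $(1+D_t)$ and the denominator correction $\frac{1}{1 - \frac{M}{2}\|d_t\|_{x_t}}$ together multiply $\frac12$ to something strictly below $\frac{15}{16}$, leaving room against $\rho_t$. The specific numbers $\frac{1}{16}$, $\frac{1}{32}$, and $\frac{15}{16}$ in the statement are evidently tuned precisely for this chain of estimates, so the work is to verify that this particular choice closes the inequalities — conceptually routine, but requiring care to keep every constant on the correct side.
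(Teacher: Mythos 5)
There is a genuine gap in your proposal: you assert that $C_t$ being small "should force $\|d_t\|_{x_t}$ well below $\frac{2}{M}$," but you never exhibit a mechanism for controlling $\|d_t\|_{x_t}$ (equivalently $\|\tilde d_t\|$) in terms of $C_t$. A priori there is no such bound—$d_t = -B_t^{-1}g_t$ depends on the Hessian approximation $B_t$, which can be an arbitrary positive-definite matrix, so $\|d_t\|$ cannot be bounded by the suboptimality alone. This is precisely why the hypothesis $\rho_t \geq 15/16$ is needed not only at the final comparison step but already when verifying that the second-order bound from Lemma~\ref{lemma_concordance} is admissible. The paper closes this gap with two steps you omit: (i) a Cauchy--Schwarz argument $\|\tilde d_t\| \le \|\tilde g_t\| \cdot \frac{\|\tilde d_t\|^2}{-\tilde g_t^\top\tilde d_t} = \frac{1}{\rho_t}\|\tilde g_t\|$, which converts the uncontrolled $\|\tilde d_t\|$ into a multiple of $\|\tilde g_t\|$, and (ii) the upper bound of Lemma~\ref{lemma_eigenvalue}, $\|\tilde g_t\|^2 \le 2(1+D_t)^2 C_t$, which is the piece that actually brings $C_t$ into play. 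Chaining these with Lemma~\ref{lemma_Hessian}(a) gives $\|d_t\|_{x_t}^2 \le \frac{2(1+D_t)^3}{\rho_t^2}C_t$, and only then do the hypotheses $C_t \le \frac{1}{16}$ and $\rho_t \ge \frac{15}{16}$ yield the numeric bound $\|d_t\|_{x_t} \le \frac{13}{30}$ that makes everything downstream admissible.

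A secondary miscalibration: the role of the threshold $\frac{4}{M^2}\omega(\frac{1}{32})$ is not to force $\frac{M}{2}\|d_t\|_{x_t} \le \frac{1}{32}$, as you suggest. It is there to ensure $D_t = 2\,\omega^{-1}(M^2 C_t/4) \le \frac{1}{16}$, since $C_t \le \frac{4}{M^2}\omega(\frac{1}{32})$ gives $\omega^{-1}(M^2 C_t/4) \le \frac{1}{32}$ by monotonicity. The bound on $\|d_t\|_{x_t}$ itself comes from the chain above and uses $C_t \le \frac{1}{16}$, $D_t \le \frac{1}{16}$, $\rho_t \ge \frac{15}{16}$ together. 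Relatedly, the constant $c$ at the final comparison is not "slightly above $\frac{1}{2}$" but rather $\frac{15}{17}$ (from $\frac{1}{2(1 - 13/30)}$), and the inequality to verify is $\frac{15}{17}\cdot\frac{1+D_t}{\rho_t} \le 1$, which holds exactly under $D_t \le \frac{1}{16}$, $\rho_t \ge \frac{15}{16}$; so the margin is much thinner than your estimate and those specific fractions matter. Once you supply the Cauchy--Schwarz step and Lemma~\ref{lemma_eigenvalue}, the rest of your sketch follows the paper's route.
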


\begin{proof}
    Notice that using \eqref{eq:H_t_vs_H*} from Lemma~\ref{lemma_Hessian}, we have that
    \begin{equation}\label{lemma_decrease_1}
        d_t^\top\nabla^2{f}(x_t)d_t \leq (1 + D_t)d_t^\top\nabla^2{f}(x_*)d_t = (1 + D_t)\|\tilde{d}_t\|^2.
    \end{equation}
    Since $-\tilde{g}_t^\top \tilde{d}_t \leq \|\tilde{g}_t\|\|\tilde{d}_t\|$ by Cauchy–Schwarz inequality where $\tilde{g}_t = \nabla^2{f(x_*)}^{-\frac{1}{2}}g_t$, we obtain
    \begin{equation}\label{lemma_decrease_2}
        \|\tilde{d}_t\| = \|\tilde{g}_t\|\frac{\|\tilde{d}_t\|}{\|\tilde{g}_t\|} \leq \|\tilde{g}_t\|\frac{\|\tilde{d}_t\|^2}{-\tilde{g}_t^\top \tilde{d}_t} = \frac{1}{\rho_t}\|\tilde{g}_t\|.
    \end{equation}
    Using the right inequality in Lemma~\ref{lemma_eigenvalue}, we have that
    \begin{equation}\label{lemma_decrease_3}
        \|\tilde{g}_t\|^2 \leq 2(1 + D_t)^{2}(f(x_t) - f(x_*)) = 2(1 + D_t)^{2}C_t.
    \end{equation}
    Leveraging \eqref{lemma_decrease_1}, \eqref{lemma_decrease_2} and \eqref{lemma_decrease_3}, we obtain that
    \begin{align*}
        d_t^\top\nabla^2{f}(x_t)d_t \leq (1 + D_t)\|\tilde{d}_t\|^2 \leq \frac{1 + D_t}{\rho_t^2}\|\tilde{g}_t\|^2 \leq \frac{2(1 + D_t)^3}{\rho_t^2}C_t.
    \end{align*}
    Since $C_t \leq \frac{1}{16}$, $D_t = 2\omega^{-1}(\frac{M^2}{4}C_t) \leq \frac{1}{16}$ and $\rho_t \geq \frac{15}{16}$, we have that
    \begin{align*}
        \sqrt{d_t^\top\nabla^2{f}(x_t)d_t} \leq \sqrt{\frac{2(1 + D_t)^3}{\rho_t^2}C_t} \leq \frac{13}{30} < 1.
    \end{align*}
    Applying the second inequality from Lemma~\ref{lemma_concordance} with $x = x_t$ and $y = x_t + d_t$, we have that
    \begin{equation*}
        f(x_t +d_t) \leq f(x_t) + g_t^\top d_t + \omega_{*}(\|d_t\|_{x_t}),
    \end{equation*}
    since $\|d_t\|_{x_t} = \sqrt{d_t^\top\nabla^2{f}(x_t)d_t} < 1$. Using (d) from Lemma~\ref{lemma_omega}, we have that
    \begin{equation*}
        f(x_t +d_t) - f(x_t) \leq g_t^\top d_t + \omega_{*}(\|d_t\|_{x_t}) \leq g_t^\top d_t + \frac{\|d_t\|_{x_t}^2}{2(1 - \|d_t\|_{x_t})}.
    \end{equation*}
    Applying the fact that $\|d_t\|_{x_t} = \sqrt{d_t^\top\nabla^2{f}(x_t)d_t} \leq \frac{13}{30}$ and \eqref{lemma_decrease_1}, we have that
    \begin{equation}\label{lemma_decrease_4}
    \begin{split}
        & f(x_t +d_t) - f(x_t) \leq g_t^\top d_t + \frac{\|d_t\|_{x_t}^2}{2(1 - \|d_t\|_{x_t})} \leq g_t^\top d_t + \frac{15}{17}\|d_t\|_{x_t}^2 \leq g_t^\top d_t + \frac{15}{17}(1 + D_t)\|\tilde{d}_t\|^2 \\
        & = g_t^\top d_t + \frac{15}{17}(1 + D_t)\frac{\|\tilde{d}_t\|^2}{-\tilde{g}_t^\top \tilde{d}_t}(-g_t^\top d_t) = -g_t^\top d_t(\frac{15}{17}(1 + D_t)\frac{\|\tilde{d}_t\|^2}{-\tilde{g}_t^\top \tilde{d}_t} - 1) \\
        & = -g_t^\top d_t(\frac{15}{17}\frac{1 + D_t}{\rho_t} - 1)
    \end{split}
    \end{equation}
    Notice that $-g_t^\top d_t = -g_t^\top B_t^{-1} g_t > 0$ and when $D_t \leq \frac{1}{16}$ and $\rho_t \geq \frac{15}{16}$, we can verify that
    \begin{equation*}
        \frac{15}{17}\frac{1 + D_t}{\rho_t} \leq 1.
    \end{equation*}
    Therefore, \eqref{lemma_decrease_4} implies the conclusion that
    \begin{equation*}
        f(x_t + d_t) - f(x_t) \leq 0.
    \end{equation*}
\end{proof}

\begin{lemma}\label{lem:lower_bound}
    Recall $\hat{p}_t = \frac{f(x_t) - f(x_{t+1})}{-\hat{g}_t^\top \hat{s}_t}$ and $\hat{n}_t= \frac{\hat{y}_t^\top \hat{s}_t}{-\hat{g}_t^\top \hat{s}_t}$ defined in \eqref{eq:def_alpha_q_m}. If the unit step size $\eta_t = 1$ satisfies the Armijo-Wolfe conditions \eqref{sufficient_decrease} and \eqref{curvature_condition}, then we have that
    \begin{equation}\label{eq:lower_bound}
        \hat{p}_t \geq 1 - \frac{1 + D_t}{2\rho_t}, \qquad \hat{n}_t\geq \frac{1}{(1 + D_t)\rho_t}.
    \end{equation}
\end{lemma}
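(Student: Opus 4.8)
The plan is to use the fact that under a unit step size $\eta_t=1$ we have $s_t = x_{t+1}-x_t = d_t$, so that $\hat{g}_t^\top\hat{s}_t = g_t^\top s_t = g_t^\top d_t$ and $\hat{y}_t^\top\hat{s}_t = y_t^\top s_t = y_t^\top d_t$, which reduces the claims to the purely geometric statements $\frac{f(x_t)-f(x_t+d_t)}{-g_t^\top d_t}\ge 1-\frac{1+D_t}{2\rho_t}$ and $\frac{y_t^\top d_t}{-g_t^\top d_t}\ge\frac{1}{(1+D_t)\rho_t}$; here $-g_t^\top d_t = g_t^\top B_t^{-1}g_t>0$. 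Since by hypothesis $\eta_t=1$ satisfies \eqref{sufficient_decrease}--\eqref{curvature_condition}, Lemma~\ref{lemma_line_search} gives $f(x_{t+1})\le f(x_t)$, which is precisely the condition needed to invoke Lemma~\ref{lemma_Hessian}(d).

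For the bound on $\hat{n}_t$, I would write $y_t = \nabla f(x_{t+1})-\nabla f(x_t) = J_t s_t = J_t d_t$ by the mean value theorem, with $J_t$ as in \eqref{def_J}, so $y_t^\top d_t = d_t^\top J_t d_t$. Applying the lower bound $J_t \succeq \frac{1}{1+D_t}\nabla^2 f(x_*)$ from Lemma~\ref{lemma_Hessian}(d) gives $y_t^\top d_t \ge \frac{1}{1+D_t}\,d_t^\top\nabla^2 f(x_*)d_t = \frac{1}{1+D_t}\|\tilde{d}_t\|^2$. Dividing by $-g_t^\top d_t$ and recalling $\rho_t = \frac{-g_t^\top d_t}{\|\tilde{d}_t\|^2}$, i.e.\ $\frac{\|\tilde{d}_t\|^2}{-g_t^\top d_t}=\frac{1}{\rho_t}$, yields $\hat{n}_t \ge \frac{1}{(1+D_t)\rho_t}$.

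For the bound on $\hat{p}_t$, I would apply Taylor's theorem with Lagrange remainder: $f(x_t+d_t) = f(x_t) + g_t^\top d_t + \frac12 d_t^\top\nabla^2 f(x_t+\tilde\tau d_t)d_t$ for some $\tilde\tau\in[0,1]$. Because $f(x_{t+1})\le f(x_t)$, both $x_t$ and $x_{t+1}$ lie in the $\|\cdot\|_{x_*}$-ball of radius $\frac{2}{M}\omega^{-1}(\frac{M^2}{4}C_t)$ around $x_*$ (as established in the proof of Lemma~\ref{lemma_Hessian}(a),(d)), hence so does the convex combination $x_t+\tilde\tau d_t = (1-\tilde\tau)x_t + \tilde\tau x_{t+1}$; the strong self-concordance inequality with $x=x_t+\tilde\tau d_t$, $y=z=x_*$ then gives $\nabla^2 f(x_t+\tilde\tau d_t)\preceq(1+D_t)\nabla^2 f(x_*)$, so the remainder is at most $\frac{1+D_t}{2}\|\tilde{d}_t\|^2$. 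Rearranging, $f(x_t)-f(x_t+d_t)\ge -g_t^\top d_t - \frac{1+D_t}{2}\|\tilde{d}_t\|^2$, and dividing by $-g_t^\top d_t$ and using the definition of $\rho_t$ gives $\hat{p}_t \ge 1-\frac{1+D_t}{2\rho_t}$.

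The only point that requires a little care is the Hessian estimate at the Taylor remainder point $x_t+\tilde\tau d_t$: this is not literally the statement of Lemma~\ref{lemma_Hessian}(d), which concerns the mean-value Hessian $J_t$ of the gradient, but the same argument applies verbatim to any point on the segment $[x_t,x_{t+1}]$ once $f(x_{t+1})\le f(x_t)$ is known. So there is no genuine obstacle here — the whole lemma is essentially a two-line consequence of the mean value/Taylor theorems together with the bracketing of intermediate Hessians by $(1+D_t)^{\pm1}\nabla^2 f(x_*)$ provided by Lemma~\ref{lemma_Hessian}.
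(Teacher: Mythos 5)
Your proof is correct and follows essentially the same route as the paper: the paper's ``proof'' of this lemma is a citation to the analogous result in \cite{qiujiang2024quasinewton}, but the identical computations appear verbatim in the paper's own proof of Lemma~\ref{lem:unit_step_size} (Section~\ref{proof_lem:unit_step_size}), applied there to the candidate point $\bar{x}_{t+1} = x_t + d_t$. One small remark: the caveat you raise at the end about Lemma~\ref{lemma_Hessian}(d) not ``literally'' covering the Taylor remainder point is unnecessary --- the definition of $J_t$ in \eqref{def_J} is parameterized by an arbitrary $\tau_1 \in [0,1]$, and part (d) is proved for every $\tau \in [0,1]$, so it already covers both the mean-value Hessian used for $y_t$ and the Lagrange remainder Hessian used for $f(x_t + d_t)$.
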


\begin{proof}
    Please check Lemma 6.1 in \cite{qiujiang2024quasinewton}. The only difference is that $C_t$ is replaced by $D_t$ defined in \eqref{suboptimality}.
\end{proof}

\begin{proposition}\label{lemma_BFGS_superlinear}
Let $\{{B}_t\}_{t\geq 0}$ be the Hessian approximation matrices generated by the BFGS update in \eqref{eq:BFGS_update}. Suppose Assumptions~\ref{ass_self_concodant} holds and $f(x_{t + 1}) \leq f(x_t)$ for any $t \geq 0$. Recall the definition of $\Psi(.)$ in \eqref{potential_function} and $D_t$ in \eqref{suboptimality}, we have
\begin{equation}\label{eq:omega}
    \sum_{i = 0}^{t - 1} \omega(\rho_i - 1) \leq  \Psi(\tilde{B}_{0}) + 2\sum_{i = 0}^{t - 1}D_i.
\end{equation}
\end{proposition}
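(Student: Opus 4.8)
The plan is to run the whole argument in the reweighted coordinates of \eqref{weighted_vector} with weight matrix $P = \nabla^2 f(x_*)$; with this choice $\hat{B}_0 = \tilde{B}_0$ by \eqref{def_B_bar}, and $\Psi(\hat{B}_t)\ge 0$ for every $t$. The first step is to recognize $\rho_t$ from \eqref{definition_rho} as a Rayleigh quotient of $\hat{B}_t$. Using $d_t = -B_t^{-1}g_t$ and $s_t = \eta_t d_t$ one has $-g_t^\top d_t = g_t^\top B_t^{-1} g_t$ and $\|\tilde{d}_t\|^2 = d_t^\top \nabla^2 f(x_*) d_t$; passing to the hatted variables (so $\hat{d}_t = -\hat{B}_t^{-1}\hat{g}_t$ and $\hat{s}_t = -\eta_t\hat{B}_t^{-1}\hat{g}_t$) gives
\[
\rho_t \;=\; \frac{\hat{g}_t^\top \hat{B}_t^{-1}\hat{g}_t}{\|\hat{B}_t^{-1}\hat{g}_t\|^2} \;=\; \frac{\hat{s}_t^\top \hat{B}_t \hat{s}_t}{\|\hat{s}_t\|^2}.
\]
This identity is what links $\rho_t$ to the potential $\Psi$.

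Next I would write down the \emph{exact} one-step change of $\Psi$ along the (hatted) BFGS update. Taking traces in the update recorded after \eqref{weighted_vector} gives $\Tr(\hat{B}_{t+1}) = \Tr(\hat{B}_t) - \|\hat{B}_t\hat{s}_t\|^2/(\hat{s}_t^\top\hat{B}_t\hat{s}_t) + \|\hat{y}_t\|^2/(\hat{y}_t^\top\hat{s}_t)$, and the classical BFGS determinant formula $\Det(\hat{B}_{t+1}) = \Det(\hat{B}_t)\,(\hat{y}_t^\top\hat{s}_t)/(\hat{s}_t^\top\hat{B}_t\hat{s}_t)$ (the computation underlying Proposition~\ref{lemma_BFGS}; see also \cite{jin2024non}) supplies the logarithmic term, so that
\[
\Psi(\hat{B}_{t+1}) = \Psi(\hat{B}_t) - \frac{\|\hat{B}_t\hat{s}_t\|^2}{\hat{s}_t^\top\hat{B}_t\hat{s}_t} + \frac{\|\hat{y}_t\|^2}{\hat{y}_t^\top\hat{s}_t} + \log\frac{\hat{s}_t^\top\hat{B}_t\hat{s}_t}{\hat{y}_t^\top\hat{s}_t}.
\]
Now I apply Cauchy--Schwarz in the form $\|\hat{B}_t\hat{s}_t\|^2/(\hat{s}_t^\top\hat{B}_t\hat{s}_t) \ge \hat{s}_t^\top\hat{B}_t\hat{s}_t/\|\hat{s}_t\|^2 = \rho_t$, split $\log(\hat{s}_t^\top\hat{B}_t\hat{s}_t/\hat{y}_t^\top\hat{s}_t) = \log\rho_t - \log\hat{m}_t$ with $\hat{m}_t = \hat{y}_t^\top\hat{s}_t/\|\hat{s}_t\|^2$ as in \eqref{eq:def_alpha_q_m}, and use $-\rho_t + \log\rho_t = -1 - \omega(\rho_t - 1)$, which is immediate from \eqref{definition_omega}. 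This produces the per-step inequality
\[
\omega(\rho_t - 1) \;\le\; \Psi(\hat{B}_t) - \Psi(\hat{B}_{t+1}) - 1 + \frac{\|\hat{y}_t\|^2}{\hat{y}_t^\top\hat{s}_t} - \log\hat{m}_t.
\]

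It remains to control the last two terms by self-concordance. By hypothesis $f(x_{t+1}) \le f(x_t)$, so Lemma~\ref{lemma_Hessian}(d) applies to $J_t$ (recall $y_t = J_t s_t$): from $J_t \preceq (1+D_t)P$ we get $P^{-1} \preceq (1+D_t)J_t^{-1}$, hence $\|\hat{y}_t\|^2 = s_t^\top J_t P^{-1} J_t s_t \le (1+D_t)\,s_t^\top J_t s_t = (1+D_t)\,\hat{y}_t^\top\hat{s}_t$, i.e. $\|\hat{y}_t\|^2/(\hat{y}_t^\top\hat{s}_t) \le 1+D_t$; and from $J_t \succeq \tfrac{1}{1+D_t}P$ we get $\hat{m}_t = s_t^\top J_t s_t/(s_t^\top P s_t) \ge \tfrac{1}{1+D_t}$, so $-\log\hat{m}_t \le \log(1+D_t) \le D_t$. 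Substituting, the per-step inequality becomes $\omega(\rho_t - 1) \le \Psi(\hat{B}_t) - \Psi(\hat{B}_{t+1}) + 2D_t$; summing over $i = 0,\dots,t-1$ telescopes the $\Psi$ terms, and using $\Psi(\hat{B}_t) \ge 0$ together with $\hat{B}_0 = \tilde{B}_0$ yields $\sum_{i=0}^{t-1}\omega(\rho_i - 1) \le \Psi(\tilde{B}_0) + 2\sum_{i=0}^{t-1}D_i$, which is the claim.

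The main obstacle is the second step: Proposition~\ref{lemma_BFGS} as stated is too lossy here, because it has already absorbed the nonnegative quantity $\omega\bigl(\|\hat{B}_t\hat{s}_t\|^2/(\hat{s}_t^\top\hat{B}_t\hat{s}_t) - 1\bigr)$ into its bound by invoking Cauchy--Schwarz inside the logarithm. One must therefore go back to the exact trace/determinant identity and apply Cauchy--Schwarz only to the $-\|\hat{B}_t\hat{s}_t\|^2/(\hat{s}_t^\top\hat{B}_t\hat{s}_t)$ term, keeping $\log(\hat{s}_t^\top\hat{B}_t\hat{s}_t/\hat{y}_t^\top\hat{s}_t)$ intact so that the Bregman-type quantity $\omega(\rho_t-1) = \rho_t - 1 - \log\rho_t$ emerges. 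Everything else is routine algebra plus Lemma~\ref{lemma_Hessian}.
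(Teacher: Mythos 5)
Your proof is correct, and since the paper simply delegates this proposition to Proposition~G.2 of \cite{qiujiang2024quasinewton} (with the remark that $C_t$ is replaced by $D_t$), you have in effect reconstructed the intended argument in a self-contained way. Each step checks out: the identity $\rho_t = \hat{s}_t^\top\hat{B}_t\hat{s}_t/\|\hat{s}_t\|^2$ follows from $\hat{B}_t\hat{s}_t = -\eta_t\hat{g}_t$; the exact trace/determinant bookkeeping for $\Psi$ under the BFGS update is correct; Cauchy--Schwarz gives $\|\hat{B}_t\hat{s}_t\|^2/(\hat{s}_t^\top\hat{B}_t\hat{s}_t) \ge \rho_t$; the algebra $-\rho_t + \log\rho_t = -1 - \omega(\rho_t - 1)$ is right; and Lemma~\ref{lemma_Hessian}(d) with $P = \nabla^2 f(x_*)$ does give $\|\hat{y}_t\|^2/\hat{y}_t^\top\hat{s}_t \le 1 + D_t$ and $\hat{m}_t \ge 1/(1+D_t)$, so that the per-step bound $\omega(\rho_t - 1) \le \Psi(\hat{B}_t) - \Psi(\hat{B}_{t+1}) + 2D_t$ telescopes to the claim. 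Your observation that Proposition~\ref{lemma_BFGS} as stated cannot be used directly is also correct: that proposition has already sacrificed the nonnegative quantity $\omega(\sigma_t - 1)$ with $\sigma_t = \|\hat{B}_t\hat{s}_t\|^2/(\hat{s}_t^\top\hat{B}_t\hat{s}_t)$, and since $\omega$ is not monotone across $0$, one cannot deduce a lower bound on $\omega(\rho_t - 1)$ from $\sigma_t \ge \rho_t$; going back to the exact identity and applying Cauchy--Schwarz only to the trace term, as you do, is the right fix.
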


\begin{proof}
    Please check Proposition G.2 in \cite{qiujiang2024quasinewton}. The only difference is $C_t$ is replaced by $D_t$ defined in \eqref{suboptimality}.
\end{proof}

\section{Proof of Lemmas, Propositions and Theorems}

\subsection{Proof of Proposition~\ref{BFGS_affine}}\label{proof_affine}

    We use induction to prove that $x_t = A\dot{x}_t$ and $B_t = (A^{-1})^\top \dot{B}_t A^{-1}$ for any $t \geq 0$. Notice that when $t = 0$, we already have that $x_0 = A\dot{x}_0$ and $B_0 = (A^{-1})^\top \dot{B}_0 A^{-1}$ since $\dot{x}_0 = A^{-1}x_0$ and $\dot{B}_0 = A^\top B_0 A$ where $A$ is non-singular. Suppose that the conditions hold for $t = k$ with $k \geq 0$, i.e., $x_k = A\dot{x}_k$ and $B_k = (A^{-1})^\top \dot{B}_k A^{-1}$. We consider the case $t = k + 1$. We have
    \begin{equation*}
    \begin{split}
        x_{k + 1} & = x_k - \eta_k B_k^{-1}\nabla{f(x_k)} = A\dot{x}_k - \eta_k A \dot{B}_k^{-1} A^\top \nabla{f(A\dot{x}_k)} \\
        & = A\dot{x}_k - \eta_k A \dot{B}_k^{-1} A^\top (A^\top)^{-1} \nabla{\phi(\dot{x}_k)} = A(\dot{x}_k - \eta_k \dot{B}_k^{-1} \nabla{\phi(\dot{x}_k)}) = A \dot{x}_{k + 1}.
    \end{split}
    \end{equation*}
    Suppose that $\dot{s}_k = \dot{x}_{k + 1} - \dot{x}_k$ and $\dot{y}_k = \nabla{\phi(\dot{x}_{k + 1})} - \nabla{\phi(\dot{x}_k)}$, we have that $s_k = x_{k + 1} - x_k = A\dot{s}_k$ and $y_k = \nabla{f(x_{k + 1})} - \nabla{f(x_k)} = \nabla{f(A\dot{x}_{k + 1})} - \nabla{f(A\dot{x}_k)} = (A^\top)^{-1}(\nabla{\phi(\dot{x}_{k + 1})} - \nabla{\phi(\dot{x}_k)}) = (A^\top)^{-1}\dot{y}_k$. Hence, we have that
    \begin{equation}
    \begin{split}
        B_{k + 1} & = B_k - \frac{B_k s_k s_k^\top B_k}{s_k^\top B_k s_k} + \frac{y_k y_k^\top}{s_k^\top y_k} \\
        & = (A^{-1})^\top \dot{B}_k A^{-1} - \frac{(A^{-1})^\top \dot{B}_k A^{-1} A \dot{s}_k \dot{s}_k^\top A^\top (A^{-1})^\top \dot{B}_k A^{-1}}{\dot{s}_k A^\top (A^{-1})^\top \dot{B}_k A^{-1} A \dot{s}_k} + \frac{(A^\top)^{-1}\dot{y}_k \dot{y}_k^\top A^{-1}}{\dot{s}_k A^\top (A^\top)^{-1}\dot{y}_k} \\
        & = (A^{-1})^\top \left(\dot{B}_k - \frac{\dot{B}_k \dot{s}_k \dot{s}_k^\top \dot{B}_k}{\dot{s}_k^\top \dot{B}_k \dot{s}_k} + \frac{\dot{y}_k \dot{y}_k^\top}{\dot{s}_k^\top \dot{y}_k}\right)A^{-1} \\
        & = (A^{-1})^\top \dot{B}_{k + 1} A^{-1}.
    \end{split}
    \end{equation}
    We prove that $x_{k + 1} = A\dot{x}_{k + 1}$ and $B_{k + 1} = (A^{-1})^\top \dot{B}_{k + 1} A^{-1}$. Therefore, we prove that $\dot{x}_t = A^{-1}x_t$ and $\dot{B}_t = A^\top B_t A$ for any $t \geq 0$ using induction. It is obvious that $\phi(\dot{x}_t) = f(A\dot{x}_t) = f(AA^{-1}x_t) = f(x_t)$ for any $t \geq 0$. Therefore, The BFGS quasi-Newton method is affine invariant.

\subsection{Proof of Theorem~\ref{thm:first_linear_rate}}\label{proof_thm_first_linear}

    We choose $P = \left(1 + D_0)\right)\nabla^2{f(x_*)}$ throughout the proof. Note that given this weight matrix $P$, it can be easily verified that for any $t \geq 0$,
    \begin{equation}
        \frac{\|\hat{y}_t\|^2}{\hat{s}_t^\top \hat{y}_t} = \frac{\hat{s}_t^\top \hat{J}_t^2 \hat{s}_t}{\hat{s}_t^\top \hat{J}_t \hat{s}_t} \leq \|\hat{J}_t\| = \frac{\|\nabla^2{f(x_*)}^{-\frac{1}{2}}J_t \nabla^2{f(x_*)}^{-\frac{1}{2}}\|}{1 + D_0} \leq \frac{1 + D_t}{1 + D_0} \leq 1,
    \end{equation}
    where $J_t$ is defined in \eqref{def_J} and we use \eqref{eq:J_t_vs_H*} in Lemma~\ref{lemma_Hessian} as well as the fact that $f(x_{t + 1}) \leq f(x_t)$, $D_t \leq D_0$ and $\omega^{-1}$ is increasing function. Hence, we use \eqref{eq:sum_of_logs} in Proposition~\ref{lemma_BFGS} with $\hat{B}_0 = \bar{B}_0$ defined in \eqref{def_B_bar} to obtain
    \begin{equation*}
        \sum_{i = 0}^{t - 1} \log{\frac{\cos^2(\hat{\theta}_i)}{\hat{m}_i}} \geq  - \Psi(\bar{B}_{0}) + \sum_{i = 0}^{t - 1}\left(1-\frac{\|\hat{y}_i\|^2}{\hat{s}_i^\top \hat{y}_i} \right) \geq - \Psi(\bar{B}_{0}),
    \end{equation*}
    which further implies that 
    \begin{equation*}
    \qquad \prod_{i = 0}^{t-1} \frac{\cos^2(\hat{\theta}_i)}{\hat{m}_i} \geq e^{-\Psi(\bar{B}_{0})}.
    \end{equation*}
    Moreover, for the choice $P = \left(1 + D_0\right)\nabla^2{f(x_*)}$, it can be shown that
    \begin{equation}
        \hat{q}_t = \frac{\|\tilde{g}_t\|^2}{(1 + D_0)(f(x_t) - f(x_*))} \geq \frac{1}{(1 + D_0)(1 + D_t)} \geq \frac{1}{(1 + D_0)^2}.
    \end{equation}
    by using result in Lemma~\ref{lemma_g}. From Lemma~\ref{lemma_line_search}, we know $\hat{p}_t \geq \alpha$ and $\hat{n}_t\geq 1 - \beta$, which lead to
    \begin{equation*}
        \prod_{i=0}^{t-1} \frac{\hat{p}_i \hat{n}_i \hat{q}_i}{\hat{m}_i} \cos^2(\hat{\theta}_i) \geq \prod_{i=0}^{t-1} \hat{p}_i \prod_{i=0}^{t-1} \hat{q}_i \prod_{i=0}^{t-1} \hat{n}_i \prod_{i=0}^{t-1} \frac{\cos^2(\hat{\theta}_i)}{\hat{m}_i} \geq \left(\frac{\alpha(1 - \beta)}{(1 + D_0)^2}\right)^t  e^{-\Psi(\bar{B}_{0})}.
    \end{equation*}
    Thus, it follows from Proposition~\ref{lemma_bound} that 
    \begin{align*}
        \frac{f(x_{t}) - f(x_*)}{f(x_0) - f(x_*)} \leq \left[1 - \left(\prod_{i = 0}^{t-1} \frac{\hat{p}_i\hat{q}_i\hat{n}_i}{\hat{m}_i} \cos^2(\hat{\theta}_i)\right)^{\frac{1}{t}}\right]^{t} \leq \left(1 - \frac{\alpha(1 - \beta)e^{-\frac{\Psi(\bar{B}_0)}{t}}}{(1 + D_0)^2}\right)^{t}.
    \end{align*}
    This completes the proof. \eqref{eq:first_linear_rate_2} can be easily verified since when $t \geq \Psi(\bar{B}_0)$, we have $e^{-\frac{\Psi(\bar{B}_0)}{t}} \geq \frac{1}{3}$.

\subsection{Proof of Theorem~\ref{thm:second_linear_rate}}\label{proof_thm_second_linear}

    First, we prove the following result holds:
    \begin{equation}\label{proposition_3_1}
        \frac{f(x_t) - f(x_*)}{f(x_0) - f(x_*)} \leq \left(1 - 2\alpha(1 - \beta)e^{- \frac{\Psi(\tilde{B}_{0}) + 3\sum_{i = 0}^{t - 1}D_i}{t}} \right)^{t}.
    \end{equation}

    We choose the weight matrix as $P = \nabla^2 f(x_*)$ throughout the proof. Similar to the proof of Theorem~\ref{thm:first_linear_rate}, we start from the key inequality in \eqref{eq:product}, but we apply different bounds on the $\hat{q}_t$ and $\frac{\cos^2(\hat{\theta}_t)}{\hat{m}_t}$. Specifically, we have that
    \begin{equation}
        \frac{\|\hat{y}_t\|^2}{\hat{s}_t^\top \hat{y}_t} = \frac{\hat{s}_t^\top \hat{J}_t^2 \hat{s}_t}{\hat{s}_t^\top \hat{J}_t \hat{s}_t} \leq \|\hat{J}_t\| = \|\nabla^2{f(x_*)}^{-\frac{1}{2}}J_t \nabla^2{f(x_*)}^{-\frac{1}{2}}\| \leq 1 + D_t.
    \end{equation}
    where $J_t$ is defined in \eqref{def_J} and we use \eqref{eq:J_t_vs_H*} in Lemma~\ref{lemma_Hessian} as well as the fact that $f(x_{t + 1}) \leq f(x_t)$. Hence, we use \eqref{eq:sum_of_logs} in Proposition~\ref{lemma_BFGS} with $\hat{B}_0 = \tilde{B}_0$ defined in \eqref{def_B_bar} to obtain
    \begin{equation*}
        \sum_{i = 0}^{t - 1} \log{\frac{\cos^2(\hat{\theta}_i)}{\hat{m}_i}} \geq  - \Psi(\Tilde{B}_{0}) + \sum_{i = 0}^{t - 1}\left(1-\frac{\|\hat{y}_i\|^2}{\hat{s}_i^\top \hat{y}_i} \right) \geq - \Psi(\Tilde{B}_{0}) - \sum_{i = 0}^{t - 1}D_i,
    \end{equation*}
    which further implies that
    \begin{equation}\label{eq:prodcut_cos}
        \prod_{i = 0}^{t - 1} \frac{\cos^2(\hat{\theta}_i)}{\hat{m}_i} \geq e^{- \Psi(\Tilde{B}_{0}) - \sum_{i = 0}^{t - 1}D_i}.
    \end{equation}
    Moreover, since $\hat{q}_t = \frac{\|\tilde{g}_t\|^2}{f(x_t) - f(x_*)} \geq \frac{2}{(1 + 2\omega^{-1}(\frac{M^2}{4}C_t))^2}$ for any $t\geq 0$ by using Lemma~\ref{lemma_eigenvalue}, we obtain that
    \begin{equation}\label{eq:product_alpha_q}
        \prod_{i = 0}^{t - 1} \hat{q}_i \geq \prod_{i = 0}^{t - 1} \frac{2}{(1 + D_i)^2} \geq 2^t\prod_{i = 0}^{t - 1} e^{-2D_i} = 2^te^{-2\sum_{i = 0}^{t - 1}D_i},
    \end{equation}
    where we use the inequality $1+x \leq e^x$ for any $x \in \mathbb{R}$. From Lemma~\ref{lemma_line_search}, we know $\hat{p}_t \geq \alpha$ and $\hat{n}_t\geq 1 - \beta$, which lead to
    \begin{equation}\label{eq:product_kappa}
        \prod_{i = 0}^{t - 1} \hat{p}_i\hat{n}_i \geq \alpha^{t} (1 - \beta)^{t}.
    \end{equation}
    Combining \eqref{eq:prodcut_cos}, \eqref{eq:product_alpha_q}, \eqref{eq:product_kappa} and \eqref{eq:product} from Proposition~\ref{lemma_bound}, we prove that
    \begin{align*}
        \frac{f(x_{t}) - f(x_*)}{f(x_0) - f(x_*)} \leq \left[1 - \left(\prod_{i = 0}^{t-1} \frac{\hat{p}_i\hat{q}_i\hat{n}_i}{\hat{m}_i} \cos^2(\hat{\theta}_i)\right)^{\frac{1}{t}}\right]^{t} \leq \left(1 - 2\alpha(1 - \beta)e^{- \frac{\Psi(\Tilde{B}_{0}) + 3\sum_{i = 0}^{t - 1}D_i}{t}} \right)^{t}.
    \end{align*}
    This completes the proof. Notice that when
    \begin{equation}
        t \geq \Psi(\Tilde{B}_{0}) + 3\sum_{i = 0}^{t - 1}D_i,
    \end{equation}
    \eqref{proposition_3_1} implies the condition that 
    \begin{equation}
        \frac{f(x_t) - f(x_*)}{f(x_0) - f(x_*)} \leq \left(1 -\frac{2\alpha(1 - \beta)}{e} \right)^{t}\leq \left(1 - \frac{2\alpha(1 - \beta)}{3} \right)^{t},
    \end{equation}
    which leads to the linear rate in \eqref{eq:second_linear_rate}. 
    
    Hence, it is sufficient to establish an upper bound on $\sum_{i = 0}^{t - 1}D_i$. We decompose the sum into two parts: $\sum_{i=0}^{\ceil{\Psi(\bar{B}_0)} - 1} D_i$ and $\sum_{i = \ceil{\Psi(\bar{B}_0)}}^t D_i$. For the first part, note that since $f(x_{i + 1}) \leq f(x_i)$ by Lemma~\ref{lemma_line_search}, we also have $D_{i + 1} \leq D_i$ for $i \geq 0$ using the fact that $\omega^{-1}$ is increasing. Hence, we have $\sum_{i=0}^{\ceil{\Psi(\bar{B}_0)} - 1} D_i \leq D_0 \ceil{\Psi(\bar{B}_0)} \leq D_0 (\Psi(\bar{B}_0) + 1)$. Moreover, by Theorem~\ref{thm:first_linear_rate}, when $t \geq \Psi(\bar{{B}}_0)$ we have 
    \begin{equation*}
        \frac{f(x_t) - f(x_*)}{f(x_0) - f(x_*)} \leq \left(1 - e^{-\frac{\Psi(\bar{B}_0)}{t}}\frac{\alpha(1 - \beta)}{(1 + D_0)^2}\right)^{t} \leq \left(1 - \frac{\alpha(1 - \beta)}{3(1 + D_0)^2}\right)^{t}.
    \end{equation*}
    Hence, using the fact that $\omega^{-1}(x) \leq \sqrt{2x} + x$ and the definition of $D_t$ in \eqref{suboptimality}, we obtain that 
    \begin{align*}
        & \sum_{i=\ceil{\Psi(\bar{B}_0)}}^t D_i \leq \sum_{i=\ceil{\Psi(\bar{B}_0)}}^t (\frac{M}{2}\sqrt{2C_i} + \frac{M^2}{4}C_i ) = \frac{\sqrt{2}M}{2}\sum_{i=\ceil{\Psi(\bar{B}_0)}}^t \sqrt{C_i} + \frac{M^2}{4}\sum_{i=\ceil{\Psi(\bar{B}_0)}}^t C_i \\
        & = \frac{\sqrt{2}M}{2}\sqrt{C_0} \sum_{i=\ceil{\Psi(\bar{B}_0)}}^t \sqrt{\frac{f(x_i)-f(x_*)}{f(x_0)-f(x_*)}} + \frac{M^2}{4} C_0 \sum_{i=\ceil{\Psi(\bar{B}_0)}}^t \frac{f(x_i)-f(x_*)}{f(x_0)-f(x_*)}\\
        & \leq \frac{\sqrt{2}M}{2}\sqrt{C_0} \sum_{i=\ceil{\Psi(\bar{B}_0)}}^t \left(1 - \frac{\alpha(1 - \beta)}{3(1 + D_0)^2}\right)^{\frac{i}{2}} + \frac{M^2}{4}C_0 \sum_{i=\ceil{\Psi(\bar{B}_0)}}^t \left(1 - \frac{\alpha(1 - \beta)}{3(1 + D_0)^2}\right)^{i} \\
        & \leq \frac{\sqrt{2}M}{2}\sqrt{C_0} \sum_{i=1}^{\infty} \left(1 - \frac{\alpha(1 - \beta)}{3(1 + D_0)^2}\right)^{\frac{i}{2}} + \frac{M^2}{4}C_0 \sum_{i=1}^{\infty} \left(1 - \frac{\alpha(1 - \beta)}{3(1 + D_0)^2}\right)^{i} \\
        & \leq \frac{\sqrt{2}M}{2}\sqrt{C_0} \left(\frac{6(1 + D_0)^2}{\alpha(1 - \beta)} - 1\right) + \frac{M^2}{4}C_0 \left(\frac{3(1 + D_0)^2}{\alpha(1 - \beta)} - 1\right)
    \end{align*}
    where we used the fact that $\sum_{i=1}^{\infty}(1-\rho)^{\frac{i}{2}} = \frac{\sqrt{1-\rho}}{1-\sqrt{1-\rho}} = \frac{\sqrt{1-\rho}+1-\rho}{\rho}\leq \frac{2}{\rho}-1$ and $\sum_{i=1}^{\infty}(1-\rho)^{i} = \frac{1 - \rho}{1 - (1 - \rho)} = \frac{1}{\rho} - 1$ for any $\rho \in (0,1)$. Hence, by combining both inequalities, we have 
    \begin{equation}\label{sum_of_C_t}
    \begin{split}
        & \sum_{i = 0}^{t - 1} D_i = \sum_{i=0}^{\ceil{\Psi(\bar{B}_0)}-1} D_i + \sum_{i=\ceil{\Psi(\bar{B}_0)}}^t D_i \\
        & \leq D_0 \Psi(\bar{B}_{0}) + \frac{\sqrt{2}M}{2}\sqrt{C_0}\frac{6(1 + D_0)^2}{\alpha(1 - \beta)} + \frac{M^2}{4}C_0\frac{3(1 + D_0)^2}{\alpha(1 - \beta)} \\
        & = D_0\Psi(\bar{B}_{0}) + (M\sqrt{2C_0} + \frac{M^2}{4}C_0)\frac{3(1 + D_0)^2}{\alpha(1 - \beta)} \leq D_0\left(\Psi(\bar{B}_{0}) + \frac{3(1 + D_0)^2}{\alpha(1 - \beta)}\right),
    \end{split}
    \end{equation}
    where the last inequality is due to (d) from Lemma~\ref{lemma_omega} and the definition of $D_t$ in \eqref{suboptimality}. Hence, when 
    $$
    t \geq \Psi(\Tilde{B}_{0}) + 3D_0\left(\Psi(\bar{B}_{0}) + \frac{3(1 + D_0)^2}{\alpha(1 - \beta)}\right) \geq \Psi(\Tilde{B}_{0}) + 3\sum_{i = 0}^{t - 1} D_i,
    $$
    using result from \eqref{proposition_3_1}, we have the linear convergence rate in \eqref{eq:second_linear_rate}.

\subsection{Proof of Lemma~\ref{lem:unit_step_size}}\label{proof_lem:unit_step_size}

    Denote $\bar{x}_{t + 1} = x_t + d_t$ and $\bar{s}_t = \bar{x}_{t + 1} - x_t = d_t$. Since $\delta_1 \leq \min\{\frac{1}{16}, \frac{4}{M^2}\omega(\frac{1}{32})\}$ and $\delta_2 \geq \frac{15}{16}$, we have $f(\bar{x}_{t + 1}) \leq f(x_t)$ from Lemma~\ref{lemma_decrease}. Using Taylor's expansion, we have that $f(\bar{x}_{t + 1}) = f(x_t) + g_t^\top d_t + \frac{1}{2}d_t^\top \nabla^2{f(x_t + \hat{\tau} (\bar{x}_{t + 1} - x_{t}))}d_t$, where $\hat{\tau} \in [0, 1]$. Hence, we have
    \begin{equation*}
    \begin{split}
        & \frac{f(x_t) - f(\bar{x}_{k+1})}{-g_t^\top d_t} = \frac{-g_t^\top d_t - \frac{1}{2}d_t^\top \nabla^2{f(x_t + \hat{\tau} (\bar{x}_{t + 1} - x_{t}))}d_t}{-g_t^\top d_t} \\
        & = 1 - \frac{1}{2}\frac{d_t^\top \nabla^2{f(x_t + \hat{\tau} (\bar{x}_{t + 1} - x_{t}))}d_t}{-g_t^\top d_t} \geq 1 - \frac{1 + D_t}{2}\frac{d_t^\top \nabla^2{f(x_*)}d_t}{-g_t^\top d_t} = 1 - \frac{1 + D_t}{2\rho_t},
    \end{split}
    \end{equation*}
    where we apply the \eqref{eq:J_t_vs_H*} from Lemma~\ref{lemma_Hessian} since $f(\bar{x}_{t + 1}) \leq f(x_t)$. Therefore, when $C_t \leq \delta_1 \leq \frac{4}{M^2}\omega(\frac{\sqrt{2(1 - \alpha)}- 1}{2})$ and $\rho_t \geq \delta_2 \geq \frac{1}{\sqrt{2(1 - \alpha)}}$, we obtain that $\frac{f(x_t) - f(\bar{x}_{k+1})}{-g_t^\top d_t} \geq 1 - \frac{1 + D_t}{2\rho_t} = 1 - \frac{1 + 2\omega^{-1}(\frac{M}{4}C_t)}{2\rho_t} \geq \alpha$ and unit step size $\eta_t = 1$ satisfies the sufficient condition \eqref{sufficient_decrease}. 
    
    Similarly, using \eqref{eq:J_t_vs_H*} from Lemma~\ref{lemma_Hessian} since $f(\bar{x}_{t + 1}) \leq f(x_t)$ and denote $\bar{g}_{k + 1} = \nabla{f}(\bar{x}_{t + 1})$, $\bar{y}_t = \bar{g}_{k + 1} - g_t$, we have that
    \begin{equation*}
    \frac{\bar{y}_t^\top \bar{s}_t}{-g_t^\top \bar{s}_t} = \frac{\bar{s}_t^\top J_t \bar{s}_t}{-g_t^\top \bar{s}_t} = \frac{d_t^\top J_t d_t}{-g_t^\top d_t} \geq \frac{1}{1 + D_t}\frac{d_t^\top \nabla^2{f(x_*)}d_t}{-g_t^\top d_t} = \frac{1}{(1 + D_t)\rho_t}.
    \end{equation*}
    Therefore, when $C_t \leq \delta_1 \leq \frac{4}{M^2}\omega(\frac{1}{2}(\frac{1}{\sqrt{1 - \beta}} - 1))$ and $\rho_t \leq \delta_3 = \frac{1}{\sqrt{1 - \beta}}$, we obtain that $\frac{\bar{y}_t^\top \bar{s}_t}{-g_t^\top \bar{s}_t} \geq \frac{1}{(1 + D_t)\rho_t} = \frac{1}{(1 + 2\omega^{-1}(\frac{M^2}{4}C_t))\rho_t} \geq 1 - \beta$, which indicates that $\bar{g}_{t + 1}^\top d_t = \bar{g}_{t + 1}^\top \bar{s}_t = \bar{y}_t^\top \bar{s}_t + g_{t}^\top \bar{s}_t \geq -g_{t}^\top \bar{s}_t(1 -\beta) + g_{t}^\top \bar{s}_t = \beta g_{t}^\top \bar{s}_t = \beta g_{t}^\top d_t$. Hence, unit step size $\eta_t = 1$ satisfies the curvature condition \eqref{curvature_condition}. Therefore, we prove that when $C_t \leq \delta_1$ and $\delta_2 \leq \rho_t \leq \delta_3$, step size $\eta_t = 1$ satisfies the Armijo-Wolfe conditions \eqref{sufficient_decrease} and \eqref{curvature_condition}.

\subsection{Proof of Lemma~\ref{lem:bad_set}}\label{proof_lem:bad_set}

Since in Theorem~\ref{thm:first_linear_rate}, we already prove that
\begin{equation*}
    \frac{f(x_t) - f(x_*)}{f(x_0) - f(x_*)} \leq \left(1 - \frac{\alpha(1 - \beta)e^{-\frac{\Psi(\bar{B}_0)}{t}}}{(1 + D_0)^2}\right)^{t}.
\end{equation*}
This implies that
\begin{equation*}
    C_t \leq \left(1 - \frac{\alpha(1 - \beta)e^{-\frac{\Psi(\bar{B}_0)}{t}}}{(1 + D_0)^2}\right)^{t}C_0.
\end{equation*}
When $t \geq \Psi(\bar{B}_0)$, we obtain that
\begin{equation*}
    C_t \leq \left(1 - \frac{\alpha(1 - \beta)}{3(1 + D_0)^2}\right)^{t}C_0.
\end{equation*}
When $t \geq \frac{3(1 + D_0)^2}{\alpha(1 - \beta)}\log{\frac{C_0}{\delta_1}}$, we obtain that
\begin{equation*}
    C_t \leq \left(1 - \frac{\alpha(1 - \beta)}{3(1 + D_0)^2}\right)^{t}C_0 \leq \delta_1.
\end{equation*}
Therefore, the first claim in \eqref{definition_t_0} follows.
    
Define $I_1  = \{t_0 \leq i \leq t - 1:\; \rho_t < \delta_2\}$ and $I_2  = \{t_0 \leq i \leq t - 1:\; \rho_t > \delta_3\}$, we know that $|I| = |I_1| + |I_2|$. Notice that for $t \in I_1$, we have that $\rho_t - 1 < \delta_2 - 1 < 0$ since $\delta_2 < 1$ and the function $\omega(x)$ defined in \eqref{definition_omega} is decreasing for $-1 < x < 0$ from (a) in Lemma~\ref{lemma_omega}. Hence, we have that $ \sum_{i \in I_1}\omega(\rho_i - 1) \geq  \sum_{i \in I_1}\omega(\delta_2 - 1) = \omega(\delta_2 - 1)|I_1|$. Similarly, we have that for $t \in I_2$, we have that $\rho_i - 1 > \delta_3 - 1 > 0$ since $\delta_3 > 1$ and the function $\omega(x)$ is increasing for $x > 0$ from (a) in Lemma~\ref{lemma_omega}. Hence, we have that $ \sum_{i \in I_2}\omega(\rho_i - 1) \geq \sum_{i \in I_2}\omega(\delta_3 - 1) = \omega(\delta_3 - 1)|I_2|$. Using \eqref{eq:omega} from Proposition~\ref{lemma_BFGS_superlinear}, we have that $\sum_{i = 0}^{t - 1} \omega(\rho_i - 1) \leq \Psi(\tilde{B}_{0}) + 2\sum_{i = 0}^{t  - 1}D_i$ for any $t \geq 1$. Therefore, we obtain that
    \begin{align*}
        & \Psi(\tilde{B}_{0}) + 2\sum_{i = 0}^{t - 1}D_i \geq \sum_{i = 0}^{t - 1} \omega(\rho_i - 1) \geq \sum_{i \in I_1}\omega(\beta_i - 1) + \sum_{i \in I_2}\omega(\beta_i - 1) \\
        & \geq \omega(\delta_2 - 1)|I_1| + \omega(\delta_3 - 1)|I_2|  \geq \min\{\omega(\delta_2 - 1), \omega(\delta_3 - 1)\}(|I_1| + |I_2|),
    \end{align*}
    which leads to the result
    \begin{equation}\label{lem:bad_set_proof_1}
    \begin{split}
        |I| & = |I_1| + |I_2| \leq \frac{\Psi(\tilde{B}_{0}) + 2\sum_{i = 0}^{t - 1}D_i}{\min\{\omega(\delta_2 - 1), \omega(\delta_3 - 1)\}} \\
        & = \delta_4 \left(\Psi(\tilde{B}_{0}) + 2\sum_{i = 0}^{t - 1}D_i\right) \leq \delta_4 \left(\Psi(\tilde{B}_{0}) + 2D_0(\Psi(\bar{B}_{0}) + \frac{3(1 + D_0)^2}{\alpha(1 - \beta)})\right),
    \end{split}
    \end{equation}
    where $\delta_4 = \frac{1}{\min\{\omega(\delta_2 - 1), \omega(\delta_3 - 1)\}}$ and the last inequality is due to \eqref{sum_of_C_t}.

\subsection{Proof of Theorem~\ref{thm:superlinear_rate_2}}\label{proof_thm:superlinear_rate_2}

    First, we prove the following result:
    \begin{equation}
        \frac{f(x_t) - f(x_*)}{f(x_0) - f(x_*)} \leq \left(\frac{\delta_6 t_0 + \delta_7 \Psi(\tilde{B}_0) + \delta_8 \sum_{i = 0}^{t - 1}D_i}{t}\right)^{t}.
    \end{equation}
    
    We choose the weight matrix as $P = \nabla^2 f(x_*)$ throughout the proof. Taking the sum from $0$ to $t - 1$ in inequality \eqref{eq:potential_decrease} of the Proposition~\ref{lemma_BFGS}, we obtain that
    \begin{equation}\
        \sum_{i = 0}^{t - 1} \log{\frac{\cos^2(\hat{\theta}_i)}{\hat{m}_i}} \geq  - \Psi(\tilde{B}_{0}) + \sum_{i = 0}^{t - 1}\left(1 - \frac{\|\hat{y}_i\|^2}{\hat{y}_i^\top \hat{s}_i} \right), \qquad  \forall t \geq 1.
    \end{equation}
    Notice that $\frac{\|\hat{y}_i\|^2}{\hat{y}_i^\top \hat{s}_i} = \frac{\|\hat{J}_i \hat{s}_i\|^2}{\hat{s}_i^\top \hat{J}_i \hat{s}_i} \leq \|\hat{J}_i\| \leq 1 + D_i$ where $\hat{J}_i$ is defined in \eqref{weighted_vector} with $P = \nabla^2 f(x_*)$ and we use \eqref{eq:J_t_vs_H*} from Lemma~\ref{lemma_Hessian}. Therefore, we have that
    \begin{equation}\label{proposition_4_1}
        \prod_{i = 0}^{t - 1} \frac{\cos^2(\hat{\theta}_i)}{\hat{m}_i} \geq e^{- \Psi(\Tilde{B}_{0}) + \sum_{i = 0}^{t - 1}\left(1 - \frac{\|\hat{y}_i\|^2}{\hat{y}_i^\top \hat{s}_i} \right)} \geq e^{- \Psi(\Tilde{B}_{0}) - \sum_{i = 0}^{t - 1}D_i}.
    \end{equation}
    where $\cos(\hat{\theta}_i)$ is defined in \eqref{eq:theta}. Recall the definitions in \eqref{eq:def_alpha_q_m} and the results in Lemma~\ref{lemma_eigenvalue}, we have
    \begin{equation}\label{proposition_4_2}
        \prod_{i = 0}^{t - 1} \hat{q}_i \geq \prod_{i = 0}^{t - 1}\frac{2}{(1 + D_i)^2} \geq 2^t e^{-2\sum_{i = 0}^{t - 1}D_i}.
    \end{equation}
    Recall the definition of the set $I = \{t_0 \leq i \leq t - 1:\; \rho_i \notin [\delta_2,\delta_3]\}$ and define the set $\bar{I} = \{t_0 \leq i \leq t - 1:\; \rho_i \in [\delta_2, \delta_3]\}$ for any $t > t_0$. Then, we have that
    \begin{equation}\label{proposition_4_3}
        \prod_{i = 0}^{t - 1} \hat{p}_i\hat{n}_i = \prod_{i = 0}^{t_0 - 1} \hat{p}_i\hat{n}_i \prod_{i \in I} \hat{p}_i\hat{n}_i \prod_{i \in \bar{I}} \hat{p}_i\hat{n}_i.
    \end{equation}
    From Lemma~\ref{lemma_line_search}, we know $\hat{p}_t \geq \alpha$ and $\hat{n}_t\geq 1 - \beta$ for any $t \geq 0$, which lead to
    \begin{equation}\label{proposition_4_4}
        \prod_{i = 0}^{t_0 - 1} \hat{p}_i\hat{n}_i \geq \alpha^{t_0} (1 - \beta)^{t_0} = \frac{1}{2^{t_0}}e^{-t_0\log{\frac{1}{2\alpha(1 - \beta)}}}.
    \end{equation}
    \begin{equation}\label{proposition_4_5}
    \begin{split}
        & \prod_{i \in I} \hat{p}_i\hat{n}_i \geq \prod_{i \in I} \alpha(1 - \beta) = \frac{1}{2^{|I|}}e^{-|I|\log{\frac{1}{2\alpha(1 - \beta)}}} \\
        & \geq \frac{1}{2^{|I|}}e^{-|I|\log{\frac{1}{2\alpha(1 - \beta)}}} \geq \frac{1}{2^{|I|}}e^{-\delta_4\Big(\Psi(\tilde{B}_{0}) + 2\sum_{i = 0}^{t - 1}D_i\Big)\log{\frac{1}{2\alpha(1 - \beta)}}},
    \end{split}
    \end{equation}
    where the second inequality holds since $\log{\frac{1}{2\alpha(1 - \beta)}} > 0$ and the last inequality holds since \eqref{lem:bad_set_proof_1} from the proof of Lemma~\ref{lem:bad_set}. Notice that when index $i \in \bar{I}$, we have $C_i \leq \delta_1$ from Lemma~\ref{lem:bad_set} and $\rho_i \in [\delta_2, \delta_3]$. Applying Lemma~\ref{lem:lower_bound} and Lemma~\ref{lem:unit_step_size}, we know that for $i \in \bar{I}$, $\eta_i = 1$ satisfies the Armijo-Wolfe conditions \eqref{sufficient_decrease}, \eqref{curvature_condition} and we have $\hat{p}_i \geq 1 - \frac{1 + D_i}{2\rho_i} > 0$ and $\hat{n}_i \geq \frac{1}{(1 + D_i)\rho_i}$ from \eqref{eq:lower_bound}. Hence, we obtain that
    \begin{equation}\label{proposition_4_6}
        \prod_{i \in \bar{I}} \hat{p}_i\hat{n}_i \geq \frac{1}{2^{\bar{I}}}\prod_{i \in \bar{I}}(2 - \frac{1 + D_i}{\rho_i})\frac{1}{(1 + D_i)\rho_i} \geq \frac{1}{2^{|\bar{I}|}}e^{-\sum_{i \in \bar{I}}D_i}\prod_{i \in \bar{I}}(2 - \frac{1 + D_i}{\rho_i})\frac{1}{\rho_i},
    \end{equation}
    where the last inequality holds since $\frac{1}{1 + D_i} \geq e^{-D_i}$. Using the fact that $\log{x} \geq 1 - \frac{1}{x}$, we obtain
    \begin{equation}\label{proposition_4_7}
    \begin{split}
        & \prod_{i \in \bar{I}}(2 - \frac{1 + D_i}{\rho_i})\frac{1}{\rho_i} = \prod_{i \in \bar{I}}e^{\log{(2 - \frac{1 + D_i}{\rho_i})} - \log{\rho_i}} \geq \prod_{i \in \bar{I}}e^{1 - \frac{1}{2 - \frac{1 + D_i}{\rho_i}} - \log{\rho_i}} \\
        & = \prod_{i \in \bar{I}}e^{\frac{\rho_i - 1 - D_i}{2\rho_i - 1 - D_i} - \log{\rho_i}} = \prod_{i \in \bar{I}}e^{\frac{\rho_i - 1 - \log{\rho_i} + 2(1 - \rho_i)\log{\rho_i} - (1 - \log{\rho_i})D_i}{2\rho_i - 1 - D_i}} \\
        & = \prod_{i \in \bar{I}}e^{\frac{\omega(\rho_i - 1) + 2(1 - \rho_i)\log{\rho_i} - (1 - \log{\rho_i})D_i }{2\rho_i - 1 - D_i}} \geq \prod_{i \in \bar{I}}e^{\frac{- 2(\rho_i - 1)\log{\rho_i} - (1 - \log{\rho_i})D_i }{2\rho_i - 1 - D_i}} \\
        & = \prod_{i \in \bar{I}}e^{-\frac{2(\rho_i - 1)\log{\rho_i} + (1 - \log{\rho_i})D_i}{2\rho_i - 1 - D_i}} \geq \prod_{i \in \bar{I}}e^{-\frac{2(\rho_i - 1)\log{\rho_i} + (1 - \log{\delta_2})D_i}{2\delta_2 - 1 - {1}/{16}}} = \prod_{i \in \bar{I}}e^{-\frac{2(\rho_i - 1)\log{\rho_i} + (1 - \log{\delta_2})D_i}{2\delta_2 - {17}/{16}}} ,
    \end{split}
    \end{equation}
    where the second inequality holds since $\omega(\rho_i - 1) \geq 0$ and the third inequality holds since $\rho_i \geq \delta_2$ due to $i \in \bar{I}$ and $C_i \leq \delta_1 \leq \frac{4}{M^2}\omega(\frac{1}{32})$, $D_i = 2\omega^{-1}(\frac{M^2}{4}C_i) \leq \frac{1}{16}$ due to $i \geq t_0$ and Lemma~\ref{lem:bad_set}. Notice that $2\rho_i - 1 - D_i \geq 2\delta_2 - 1 - \frac{1}{16} > 0$ for all $i \in \bar{I}$ since $\rho_i \geq \delta_2 \geq \frac{15}{16}$.

    When $\rho_i \geq 1$, using $\log{\rho_i} \leq \rho_i - 1$, (b) in Lemma~\ref{lemma_omega} and $\rho_i \leq \delta_3$ due to $i \in \bar{I}$, we have that
    \begin{equation}\label{proposition_4_8}
        (\rho_i - 1)\log{\rho_i} \leq (\rho_i - 1)^2 \leq 2\rho_i\omega(\rho_i - 1) \leq 2\delta_3\omega(\rho_i - 1).
    \end{equation}
    Similarly, when $\rho_i < 1$, using $\log{\rho_i} \geq 1 - \frac{1}{\rho_i}$, (c) in Lemma~\ref{lemma_omega} and $\rho_i \geq \delta_2$ due to $i \in \bar{I}$, we have
    \begin{equation}\label{proposition_4_9}
        (\rho_i - 1)\log{\rho_i} \leq \frac{(\rho_i - 1)^2}{\rho_i} \leq \frac{\rho_i + 1}{\rho_i}\omega(\rho_i - 1) \leq (1 + \frac{1}{\delta_2})\omega(\rho_i - 1).
    \end{equation}
    Combining \eqref{proposition_4_7}, \eqref{proposition_4_8} and \eqref{proposition_4_9}, we obtain that
    \begin{equation}\label{proposition_4_10}
    \begin{split}
        & \prod_{i \in \bar{I}}(2 - \frac{1 + D_i}{\rho_i})\frac{1}{\rho_i} \geq \prod_{i \in \bar{I}}e^{-\frac{2(\rho_i - 1)\log{\rho_i} + (1 - \log{\delta_2})D_i}{2\delta_2 - 17/16}} = \prod_{i \in \bar{I}}e^{-\frac{2(\rho_i - 1)\log{\rho_i}}{2\delta_2 - 17/16}} \prod_{i \in \bar{I}}e^{-\frac{(1 - \log{\delta_2})D_i}{2\delta_2 - 17/16}} \\
        & = \prod_{i \in \bar{I}, \rho_i < 1}e^{-\frac{2(\rho_i - 1)\log{\rho_i}}{2\delta_2 - 17/16}} \prod_{i \in \bar{I}, \rho_i \geq 1}e^{-\frac{2(\rho_i - 1)\log{\rho_i}}{2\delta_2 - 17/16}} \prod_{i \in \bar{I}}e^{-\frac{(1 - \log{\delta_2})D_i}{2\delta_2 - 17/16}} \\
        & \geq \prod_{i \in \bar{I}, \rho_i < 1}e^{-\frac{2(1 + \frac{1}{\delta_2})\omega(\rho_i - 1)}{2\delta_2 - 17/16}} \prod_{i \in \bar{I}, \rho_i \geq 1}e^{-\frac{4\delta_3\omega(\rho_i - 1)}{2\delta_2 - 17/16}} \prod_{i \in \bar{I}}e^{-\frac{(1 - \log{\delta_2})D_i}{2\delta_2 - 17/16}} \\
        & = e^{- \frac{2 + \frac{2}{\delta_2}}{2\delta_2 - 17/16}\sum_{i \in \bar{I}, \rho_i < 1}\omega(\rho_i - 1) - \frac{4\delta_3}{2\delta_2 - 17/16}\sum_{i \in \bar{I}, \rho_i \geq 1}\omega(\rho_i - 1) - \frac{(1 - \log{\delta_2})}{2\delta_2 - 17/16}\sum_{i \in \bar{I}}D_i} \\
        & \geq e^{- \delta_5\left(\sum_{i \in \bar{I}, \rho_i < 1}\omega(\rho_i - 1) + \sum_{i \in \bar{I}, \rho_i \geq 1}\omega(\rho_i - 1)\right) - \frac{(1 - \log{\delta_2})}{2\delta_2 - 17/16}\sum_{i \in \bar{I}}D_i} \\
        & = e^{- \delta_5\sum_{i \in \bar{I}}\omega(\rho_i - 1) - \frac{(1 - \log{\delta_2})}{2\delta_2 - 17/16}\sum_{i \in \bar{I}}D_i}
    \end{split}
    \end{equation}
    where $\delta_5 = \max\{\frac{2 + \frac{2}{\delta_2}}{2\delta_2 - 17/16}, \frac{4\delta_3}{2\delta_2 - 17/16}\}$. Combining \eqref{proposition_4_6} and \eqref{proposition_4_10}, we obtain that
    \begin{equation}\label{proposition_4_11}
    \begin{split}
        & \prod_{i \in \bar{I}} \hat{p}_i\hat{n}_i \geq \frac{1}{2^{|\bar{I}|}}e^{-\sum_{i \in \bar{I}}D_i}\prod_{i \in \bar{I}}(2 - \frac{1 + D_i}{\rho_i})\frac{1}{\rho_i} \\
        & \geq \frac{1}{2^{|\bar{I}|}}e^{- \delta_5\sum_{i \in \bar{I}}\omega(\rho_i - 1) - (1 + \frac{1 - \log{\delta_2}}{2\delta_2 - 17/16})\sum_{i \in \bar{I}}D_i} \geq \frac{1}{2^{|\bar{I}|}}e^{- \delta_5\sum_{i = 0}^{t - 1}\omega(\rho_i - 1) - \frac{2\delta_2 - \delta_1 - \log{\delta_2}}{2\delta_2 - 17/16}\sum_{i = 0}^{t - 1}D_i} \\
        & \geq \frac{1}{2^{|\bar{I}|}}e^{- \delta_5\Big(\Psi(\tilde{B}_{0}) + 2\sum_{i = 0}^{t - 1}D_i\Big) - \frac{2\delta_2 - 1/16 - \log{\delta_2}}{2\delta_2 - 17/16}\sum_{i = 0}^{t - 1}D_i},
    \end{split}
    \end{equation}
    where the last inequality is due to \eqref{eq:omega} from Lemma~\ref{lemma_omega}. Combining \eqref{proposition_4_3}, \eqref{proposition_4_4}, \eqref{proposition_4_5} and \eqref{proposition_4_11}, we obtain that
    \begin{equation}\label{proposition_4_12}
    \begin{split}
        & \prod_{i = 0}^{t - 1} \hat{p}_i\hat{n}_i = \prod_{i = 0}^{t_0 - 1} \hat{p}_i\hat{n}_i\prod_{i \in I} \hat{p}_i\hat{n}_i \prod_{i \in \bar{I}} \hat{p}_i\hat{n}_i \\
        & \geq \frac{1}{2^{t_0}}e^{-t_0\log{\frac{1}{2\alpha(1 - \beta)}}}\frac{1}{2^{|I|}}e^{-\delta_4\Big(\Psi(\tilde{B}_{t_0}) + 2\sum_{i = 0}^{t - 1}D_i\Big)\log{\frac{1}{2\alpha(1 - \beta)}}} \\
        & \quad \frac{1}{2^{|\bar{I}|}}e^{- \delta_5\Big(\Psi(\tilde{B}_{0}) + 2\sum_{i = 0}^{t - 1}D_i\Big) - \frac{2\delta_2 - 1/16 - \log{\delta_2}}{2\delta_2 - 17/16}\sum_{i = 0}^{t - 1}D_i} \\
        & = \frac{1}{2^t}e^{-\Big(t_0\log{\frac{1}{2\alpha(1 - \beta)}} + (\delta_4\log{\frac{1}{2\alpha(1 - \beta)}} + \delta_5) \Psi(\tilde{B}_{0}) + (2\delta_4\log{\frac{1}{2\alpha(1 - \beta)}} + 2\delta_5 + \frac{2\delta_2 - 1/16 - \log{\delta_2}}{2\delta_2 - 17/16})\sum_{i = 0}^{t - 1}D_i \Big)}.
    \end{split}
    \end{equation}
    Leveraging \eqref{proposition_4_1}, \eqref{proposition_4_2}, \eqref{proposition_4_12} with \eqref{eq:product} from Proposition~\ref{lemma_bound}, we prove that
    \begin{align*}
        & \frac{f(x_{t}) - f(x_*)}{f(x_{0}) - f(x_*)} \leq \left[1 - \left(\prod_{i = 0}^{t - 1} \hat{p}_i \hat{q}_{i} \hat{n}_i\frac{\cos^2(\hat{\theta}_{i})}{\hat{m}_{i}}\right)^{\frac{1}{t}}\right]^{t} = \left[1 - \left(\prod_{i = 0}^{t - 1} \hat{p}_i \hat{n}_{i} \prod_{i = 0}^{t - 1} \hat{q}_i \prod_{i = 0}^{t - 1}\frac{\cos^2(\hat{\theta}_{i})}{\hat{m}_{i}}\right)^{\frac{1}{t}}\right]^{t} \\
        & \leq \left(1 - e^{- \frac{t_0\log{\frac{1}{2\alpha(1 - \beta)}} + (1 + \delta_4\log{\frac{1}{2\alpha(1 - \beta)}} + \delta_5) \Psi(\tilde{B}_{0}) + (2 + 2\delta_4\log{\frac{1}{2\alpha(1 - \beta)}} + 2\delta_5 + \frac{2\delta_2 - 1/16 - \log{\delta_2}}{2\delta_2 - 17/16})\sum_{i = 0}^{t - 1}D_i}{t}}\right)^{t} \\
        & = \left(1 - e^{-\frac{\delta_6 t_0 + \delta_7 \Psi(\tilde{B}_0) + \delta_8 \sum_{i = 0}^{t - 1}D_i}{t}}\right)^{t} \leq \left(\frac{\delta_6 t_0 + \delta_7 \Psi(\tilde{B}_0) + \delta_8 \sum_{i = 0}^{t - 1}D_i}{t}\right)^{t},
    \end{align*}
    where the inequality is due to the fact that $1 - e^{-x} \leq x$ for any $x \in \mathbb{R}$ and $\delta_6, \delta_7, \delta_8$ are defined in \eqref{def_delta_5678}. Hence, we prove that for any $t > t_0$,
    \begin{equation}\label{proof_theorem_superlinear_1}
    \frac{f(x_{t}) - f(x_*)}{f(x_{0}) - f(x_*)} \leq \left(1 - e^{-\frac{\delta_6 t_0 + \delta_7 \Psi(\tilde{B}_0) + \delta_8 \sum_{i = 0}^{t - 1}D_i}{t}}\right)^{t} \leq \left(\frac{\delta_6 t_0 + \delta_7 \Psi(\tilde{B}_0) + \delta_8 \sum_{i = 0}^{t - 1}D_i}{t}\right)^{t}.
    \end{equation}
    From \eqref{sum_of_C_t} in Theorem~\ref{thm:second_linear_rate}, we have that
    \begin{equation}
        \sum_{i = 0}^{t - 1}D_i \leq D_0\left(\Psi(\bar{B}_{0}) + \frac{3(1 + D_0)^2}{\alpha(1 - \beta)}\right).
    \end{equation}
    Therefore, combing the above inequality with \eqref{proof_theorem_superlinear_1}, we prove that
    \begin{equation*}
    \begin{split}
        & \frac{f(x_{t}) - f(x_*)}{f(x_{0}) - f(x_*)} \leq \left(\frac{\delta_6 t_0 + \delta_7 \Psi(\tilde{B}_0) + \delta_8 \sum_{i = 0}^{t - 1}D_i}{t}\right)^{t} \\
        & \leq \left(\frac{\delta_6 t_0 + \delta_7 \Psi(\tilde{B}_0) + \delta_8 D_0\left(\Psi(\bar{B}_{0}) + \frac{3(1 + D_0)^2}{\alpha(1 - \beta)}\right)}{t}\right)^{t}.
    \end{split}
    \end{equation*}

\section{Proof of Iteration Complexity}\label{proof_of_complexity}

We treat the line search parameters $\alpha$ and $\beta$ as absolute constants. The first linear rate from Theorem~\ref{thm:first_linear_rate} leads to the global complexity of 
\begin{equation}\label{complexity_1}
    \mathcal{O}( \Psi(\bar{B_0}) + (1 + D_0)^2 \log{\frac{1}{\epsilon}} )
\end{equation}
The second linear rate from Theorem~\ref{thm:second_linear_rate} leads to the global complexity of 
\begin{equation}\label{complexity_2}
    \mathcal{O}( \Psi(\tilde{B_0}) + (\Psi(\bar{B_0}) + (1 + D_0)^2)D_0 + \log{\frac{1}{\epsilon}} )
\end{equation}
where the first term is the number of iterations required to reach the linear rate in \eqref{eq:second_linear_rate}. For the analysis of the superlinear convergence rate, we denote that 
\begin{equation*}
    \Omega = \Psi(\tilde{B_0}) + (\Psi(\bar{B_0}) + (1 + D_0)^2)D_0
\end{equation*}
From Theorem~\ref{thm:superlinear_rate_2}, we have that
\begin{align*}
    \frac{f(x_t) - f(x_*)}{f(x_0) - f(x_*)} \leq (\frac{\Omega}{t})^{t}
\end{align*}
Let $T_*$ be the number such that the inequality $(\frac{\Omega}{t})^{t} \leq \epsilon$ above becomes equality. we have
\begin{align*}
    \log{\frac{1}{\epsilon}} = T_* \log{\frac{T_*}{\Omega}} \leq T_*(\frac{T_*}{\Omega} - 1),
\end{align*}
which leads to
\begin{align*}
    T_* \geq \frac{\Omega + \sqrt{\Omega^2 + 4\Omega\log{\frac{1}{\epsilon}}}}{2}.
\end{align*}
Hence, we have that
\begin{align*}
    \log{\frac{1}{\epsilon}} = T_* \log{\frac{T_*}{\Omega}} \geq T_*\log{\frac{\Omega + \sqrt{\Omega^2 + 4\Omega\log{\frac{1}{\epsilon}}}}{2\Omega}} \geq T_*\log{\left(\frac{1}{2} + \sqrt{\frac{1}{4} + \frac{\log{\frac{1}{\epsilon}}}{\Omega}}\right)},
\end{align*}
which implies that
\begin{align*}
    T_* \leq \frac{\log{\frac{1}{\epsilon}}}{\log{\left(\frac{1}{2} + \sqrt{\frac{1}{4} + \frac{\log{\frac{1}{\epsilon}}}{\Omega}}\right)}}.
\end{align*}
Hence, to reach the accuracy of $\epsilon$, we need the number of iterations $t$ to be at least
\begin{equation}\label{complexity_3}
    \mathcal{O}(\frac{\log{\frac{1}{\epsilon}}}{\log{\left(\frac{1}{2} + \sqrt{\frac{1}{4} + \frac{1}{\Omega}\log{\frac{1}{\epsilon}}}\right)}}).
\end{equation}
Therefore, we prove the iteration complexity by choosing the minimal from \eqref{complexity_1}, \eqref{complexity_2}, and \eqref{complexity_3}. For the special case of $B_0 = a I$ for $a > 0$, just replace $\Psi(\bar{B_0})$ and $\Psi(\tilde{B_0})$ by $\Delta_1$ and $\Delta_2$ defined in \eqref{Delta_1}, \eqref{Delta_2}, respectively.

\section{Proof of Line Search Complexity}\label{proof_of_linesearch}

\begin{algorithm}
\caption{Log Bisection Algorithm for Weak Wolfe Conditions}\label{algo_wolfe} 
\begin{algorithmic}[1] 
{\REQUIRE Initial step size $\eta^{(0)} = 1$, $\eta^{(0)}_{min} = 0$, $\eta^{(0)}_{max} = +\infty$
\FOR {$i = 0, 1, 2, \ldots$}
    \IF{$f(x_t + \eta^{(i)} d_t) > f(x_t) + \alpha \eta^{(i)} \nabla{f}(x_t)^\top d_t$}\label{line:sufficient_decrease}
        \STATE 
        Set $\eta^{(i + 1)}_{max} = \eta^{(i)}$ and  $\eta^{(i + 1)}_{min} = \eta^{(i)}_{min}$
        \IF{$\eta^{(i)}_{min} = 0$}
            \STATE $\eta^{(i + 1)} = (\frac{1}{2})^{2^{i + 1} - 1}$
        \ELSE
            \STATE $\eta^{(i + 1)} = \sqrt{\eta^{(i + 1)}_{max}\eta^{(i + 1)}_{min}}$
        \ENDIF
    \ELSIF{$\nabla{f}(x_t + \eta^{(i)} d_t)^\top d_t < \beta \nabla{f}(x_t)^\top d_t$}\label{line:curvature}
        \STATE Set $\eta^{(i + 1)}_{max} = \eta^{(i)}_{max}$ and $\eta^{(i + 1)}_{min} = \eta^{(i)}$
        \IF{$\eta^{(i)}_{max} = +\infty$}
            \STATE $\eta^{(i + 1)} = 2^{^{2^{i + 1} - 1}}$
        \ELSE
            \STATE $\eta^{(i + 1)} = \sqrt{\eta^{(i + 1)}_{max}\eta^{(i + 1)}_{min}}$
        \ENDIF
    \ELSE
        \STATE Return $\eta^{(i)}$
    \ENDIF
\ENDFOR}
\end{algorithmic}
\end{algorithm}

\begin{proposition}\label{proposition_stepsize}
    Suppose that Assumption~\ref{ass_self_concodant} holds. Consider the BFGS method with inexact line search defined in \eqref{sufficient_decrease} and \eqref{curvature_condition} and we choose the step size $\eta_t$ according to Algorithm~\ref{algo_wolfe}. At iteration $t$, denote $\lambda_t$ as the number of loops in Algorithm~\ref{algo_wolfe} to terminate and return the $\eta_t$ satisfying the Wolfe conditions \eqref{sufficient_decrease} and \eqref{curvature_condition}. Then $\lambda_t$ is finite and upper bounded by
    \begin{equation}
    \begin{split}
        & \lambda_t \leq 2 + \log_{2}\Big(1 + \frac{(1 - \beta)(1 + 2D_t)}{\beta - \alpha}\Big) \\
        & \qquad + 2\log_{2}\Big(1 + \log_{2}\big(2(1 - \alpha)(1 + D_t)\big) + \max\{\log_{2}\rho_t, \log_{2}\frac{1}{\rho_t}\}\Big).
    \end{split}
    \end{equation}
\end{proposition}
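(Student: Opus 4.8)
Write $\phi(\eta):=f(x_t+\eta d_t)$. The plan is to reduce the claim to a one–dimensional statement, characterize the set of step sizes meeting \eqref{sufficient_decrease}--\eqref{curvature_condition} using strong self-concordance, and then analyze Algorithm~\ref{algo_wolfe} in two phases: a \emph{bracketing phase} and a \emph{bisection phase}. Since $f$ is convex, $\phi$ is convex, $\phi'(0)=g_t^\top d_t=-g_t^\top B_t^{-1}g_t<0$, and $\phi$ is bounded below. Because $\eta\mapsto\phi(\eta)-\phi(0)-\alpha\eta\phi'(0)$ is convex and $\phi'$ is nondecreasing, the sufficient-decrease set \eqref{sufficient_decrease} is an interval $(0,\eta^{SD}_t]$ and the curvature set \eqref{curvature_condition} is an interval $[\eta^{C}_t,\infty)$, so the set of admissible step sizes is exactly $[\eta^{C}_t,\eta^{SD}_t]$; one checks directly (or invokes the standard Wolfe existence argument) that $0<\eta^{C}_t\le\eta^{SD}_t$.

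The technical heart is to bound $\eta^{SD}_t$, $\eta^{C}_t$, and the log-width $\log(\eta^{SD}_t/\eta^{C}_t)$ explicitly. The key fact is that on the entire segment $[0,\eta^{SD}_t]$ one has $\phi(\eta)\le\phi(0)$, so by the argument behind Lemma~\ref{lemma_Hessian}(a) applied at the point $x_t+\eta d_t$ (whose suboptimality is at most $C_t$, hence its $D$-parameter at most $D_t$), the second derivative is pinned: $\tfrac{1}{1+D_t}\|\tilde d_t\|^2\le \phi''(\eta)\le(1+D_t)\|\tilde d_t\|^2$, where $\|\tilde d_t\|^2=d_t^\top\nabla^2 f(x_*)d_t=(-g_t^\top d_t)/\rho_t$ by the definition of $\rho_t$ in \eqref{definition_rho}. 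Integrating $\phi''$ once and twice then places $[\eta^{C}_t,\eta^{SD}_t]$ inside an interval of the form $[\,c_1\rho_t/(1+D_t),\ c_2(1+D_t)\rho_t\,]$ with $c_1,c_2$ depending only on $\alpha,\beta$. For the lower bound on the log-width, I evaluate curvature right at $\eta=\eta^{SD}_t$: there $\phi$ meets the Armijo line, so convexity forces $\phi'(\eta^{SD}_t)\ge\alpha\phi'(0)$, leaving a slack $\phi'(\eta^{SD}_t)-\beta\phi'(0)\ge(\beta-\alpha)(-g_t^\top d_t)$ in \eqref{curvature_condition}; since $\phi''\le(1+D_t)\|\tilde d_t\|^2$ on $[\eta^{C}_t,\eta^{SD}_t]$, this slack persists after decreasing $\eta$ additively by $\tfrac{(\beta-\alpha)\rho_t}{1+D_t}$, whence $\eta^{C}_t\le\eta^{SD}_t-\tfrac{(\beta-\alpha)\rho_t}{1+D_t}$ and, combining with the upper bound on $\eta^{SD}_t$, $\tfrac{\eta^{SD}_t}{\eta^{C}_t}\ge 1+\Omega\!\big(\tfrac{\beta-\alpha}{(1-\beta)(1+2D_t)}\big)$ after simplification (using $0<\alpha<\tfrac12<\beta<1$; the $(1+D_t)$-factors can be sharpened by working relative to $\nabla^2 f(x_t)$ rather than $\nabla^2 f(x_*)$). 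In particular $\log_2\eta^{C}_t$ and $\log_2(1/\eta^{SD}_t)$ are each at most $\Theta_t:=\log_2(2(1-\alpha)(1+D_t))+\max\{\log_2\rho_t,\log_2(1/\rho_t)\}+O(1)$, and $w_t:=\log_2(\eta^{SD}_t/\eta^{C}_t)\ge\log_2\!\big(1+\Omega(\tfrac{\beta-\alpha}{(1-\beta)(1+2D_t)})\big)$.

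Now I analyze the algorithm. Starting from $\eta^{(0)}=1$: while $\eta^{(i)}_{\min}=0$ the iterate only shrinks along $\eta^{(i)}=(1/2)^{2^i-1}$, while $\eta^{(i)}_{\max}=+\infty$ it only expands along $\eta^{(i)}=2^{2^i-1}$, and the very first time a finite two-sided bracket $[\eta_{\min},\eta_{\max}]$ forms, every later query is the geometric midpoint $\sqrt{\eta_{\min}\eta_{\max}}$; thus there is at most one monotone phase followed by a pure bisection phase. The monotone phase ends at the first index $k$ whose query meets or jumps past $[\eta^{C}_t,\eta^{SD}_t]$; since $|\log_2\eta^{(i)}|=2^i-1$ and the last pre-jump query sits strictly on the near side of that interval, $2^{k-1}-1<\Theta_t$, so the monotone phase costs at most $2+\log_2(1+\Theta_t)$ queries and leaves a bracket of $\log_2$-width $2^{k-1}\le 1+\Theta_t$ that still contains $[\eta^{C}_t,\eta^{SD}_t]$ (a one-sided overshoot straddles it). In the bisection phase I maintain the invariant that the bracket contains $[\eta^{C}_t,\eta^{SD}_t]$: a midpoint below $\eta^{C}_t$ violates \eqref{curvature_condition} and updates $\eta_{\min}$, one above $\eta^{SD}_t$ violates \eqref{sufficient_decrease} and updates $\eta_{\max}$, one inside terminates the loop; the $\log_2$-width halves at each non-terminating step and can never fall below $w_t$, so the bisection phase costs at most $1+\log_2\!\big((1+\Theta_t)/w_t\big)$ queries.

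Adding the two phases, $\lambda_t\le 2+\log_2(1+\Theta_t)+\log_2\!\big((1+\Theta_t)/w_t\big)+O(1)$; using $1/\log_2(1+x)\le 1+1/x$ to obtain $\log_2(1/w_t)\le\log_2\!\big(1+\tfrac{(1-\beta)(1+2D_t)}{\beta-\alpha}\big)$, using $1+\Theta_t\le 1+\log_2(2(1-\alpha)(1+D_t))+\max\{\log_2\rho_t,\log_2(1/\rho_t)\}$, and absorbing the absolute constants, yields the stated estimate; finiteness of $\lambda_t$ is then immediate. The main obstacle is the second step: deriving the self-concordant replacements for the classical Lipschitz-Hessian line-search estimates with the correct $\rho_t,D_t$ dependence, and especially establishing the multiplicative-margin lower bound $\eta^{SD}_t/\eta^{C}_t\ge 1+\Omega\!\big(\tfrac{\beta-\alpha}{(1-\beta)(1+2D_t)}\big)$ \emph{uniformly in $t$} — it cannot rely on $D_t$ being small — which forces careful control of how far the segment $\{x_t+\eta d_t:0\le\eta\le\eta^{SD}_t\}$ strays from $x_t$ and of how much $\nabla^2 f$ varies along it.
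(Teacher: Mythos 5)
Your overall plan—characterize the Wolfe‑admissible set as an interval $[\eta^C_t,\eta^{SD}_t]$, bound its endpoints and its $\log_2$‑width via the strong self‑concordance estimates, then split the run of Algorithm~\ref{algo_wolfe} into a one‑sided (shrink or expand) phase followed by a pure bisection phase—is the right skeleton, and it is presumably what the cited Proposition~K.2 in \cite{qiujiang2024quasinewton} does (the paper itself offers no proof here, only the pointer ``replace $C_t$ by $D_t$''). Your observation that $\phi\le\phi(0)$ on $[0,\eta^{SD}_t]$ makes Lemma~\ref{lemma_Hessian}(a) applicable along the whole relevant segment, the bracketing‑invariant argument for the bisection phase, and the counting $2^{k-1}-1<\Theta_t$ for the monotone phase are all sound.

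There is, however, a genuine quantitative gap at the crucial log‑width lower bound. You claim $w_t:=\log_2(\eta^{SD}_t/\eta^C_t)\ge\log_2\bigl(1+\Omega(\tfrac{\beta-\alpha}{(1-\beta)(1+2D_t)})\bigr)$, i.e.\ a \emph{first} power of $(1+2D_t)$, which is exactly what the stated bound's first $\log_2$ term requires. But the derivation you sketch does not produce this. From $\phi''(\eta)\in[\psi/(1+2D_t),(1+2D_t)\psi]$ with $\psi:=\phi''(0)=d_t^\top\nabla^2f(x_t)d_t$ (the sharpened, $x_t$‑centered version), the slack argument gives $\eta^{SD}_t-\eta^C_t\ge\tfrac{(\beta-\alpha)\varrho}{1+2D_t}$ while $\eta^C_t\le(1-\beta)(1+2D_t)\varrho$ (where $\varrho:=-g_t^\top d_t/\psi$), so
\[
\frac{\eta^{SD}_t}{\eta^C_t}\ \ge\ 1+\frac{\beta-\alpha}{(1-\beta)(1+2D_t)^2},
\]
i.e.\ a \emph{second} power. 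The ``after simplification'' that is supposed to recover the first power is not shown, and I do not see a way to obtain it from the ingredients you use; the two places the Hessian‑variation factor enters (upper bound on $\eta^C_t$, lower bound on $\eta^{SD}_t-\eta^C_t$) multiply. This matters: for $D_t$ not small the stated bound would be violated by the bound your argument actually proves. A second, smaller gap is the conversion between $\varrho$ and the statement's $\rho_t$, which uses $\nabla^2f(x_*)$; this costs a multiplicative factor in $[(1+D_t)^{-1},1+D_t]$ that shows up as an untracked $\pm\log_2(1+D_t)$ inside $\Theta_t$, and it is not immediate that the resulting constant reduces to the $\log_2\bigl(2(1-\alpha)(1+D_t)\bigr)$ appearing in the proposition. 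Finally, because the argument carries $O(1)$ and $\Omega(\cdot)$ throughout and ends with ``absorbing absolute constants,'' it establishes a bound of the correct \emph{form} but not the specific numerical inequality claimed. To close the argument you would need either a sharper self‑concordant estimate that makes $\phi''$ vary by only a single factor of $(1+2D_t)$ across $[\eta^C_t,\eta^{SD}_t]$ (rather than across $[0,\eta^{SD}_t]$), or to accept a $(1+2D_t)^2$ in the statement and track the constants explicitly.
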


\begin{proof}
   Please check Proposition K.2 in \cite{qiujiang2024quasinewton}. The only difference is $C_t$ is replaced by $D_t$ defined in \eqref{suboptimality}.
\end{proof}

    We can prove the line search complexity in Proposition~\ref{prop:line_search} using result from Proposition~\ref{proposition_stepsize}. We have that
    \begin{equation}\label{thm:line_search_proof_1}
    \begin{split}
        \Lambda_t & = \frac{1}{t}\sum_{i = 0}^{t - 1} \lambda_i  \leq 2 + \frac{1}{t}\sum_{i = 0}^{t - 1}\log_{2}\Big(1 + \frac{(1 - \beta)(1 + 2D_i)}{\beta - \alpha}\Big) \\
        & + \frac{2}{t}\sum_{i = 0}^{t - 1}\log_{2}\Big(1 + \log_{2}\big(2(1 - \alpha)(1 + D_i)\big) + \max\{\log_{2}\rho_i, \log_{2}\frac{1}{\rho_i}\}\Big).
    \end{split}
    \end{equation}
    Using Jensen's inequality, we have that
    \begin{equation}\label{thm:line_search_proof_2}
    \begin{split}
        \frac{1}{t}\sum_{i = 0}^{t - 1}\log_{2}\Big(1 + \frac{(1 - \beta)(1 + 2D_i)}{\beta - \alpha}\Big) \leq \log_{2}\Big(1 + \frac{1 - \beta}{\beta - \alpha} + \frac{2(1 - \beta)}{\beta - \alpha}\frac{\sum_{i = 0}^{t - 1}D_i}{t}\Big).
    \end{split}
    \end{equation}
    \begin{equation}\label{thm:line_search_proof_3}
    \begin{split}
        & \frac{1}{t}\sum_{i = 0}^{t - 1}\log_{2}\Big(1 + \log_{2}\big(2(1 - \alpha)(1 + D_i)\big) + \max\{\log_{2}\rho_i, \log_{2}\frac{1}{\rho_i}\}\Big) \\
        & \leq \log_{2}\Big(1 +\log_{2}2(1 - \alpha) + \frac{1}{t}\sum_{i = 0}^{t - 1}\log_{2}(1 + D_i) + \frac{1}{t}\sum_{i = 0}^{t - 1}\max\{\log_{2}\rho_i, \log_{2}\frac{1}{\rho_i}\}\Big) \\
        & \leq \log_{2}\Big(1 +\log_{2}2(1 - \alpha) + \log_{2}\big(1 + \frac{\sum_{i = 0}^{t - 1}D_i}{t}) + \frac{1}{t}\sum_{i = 0}^{t - 1}\max\{\log_{2}\rho_i, \log_{2}\frac{1}{\rho_i}\}\Big).
    \end{split}
    \end{equation}
    We also have that
    \begin{equation}\label{thm:line_search_proof_4}
    \begin{split}
        & \quad \frac{1}{t}\sum_{i = 0}^{t - 1}\max\{\log_{2}\rho_i, \log_{2}\frac{1}{\rho_i}\} = \frac{1}{t}\sum_{i = 0, \rho_i \geq 1}^{t - 1}\log_{2}\rho_i + \frac{1}{t}\sum_{i = 0, 0 \leq \rho_i < 1}^{t - 1}\log_{2}\frac{1}{\rho_i} \\
        & = \frac{1}{t}\sum_{i = 0, \rho_i \geq 2}^{t - 1}\log_{2}\rho_i + \frac{1}{t}\sum_{i = 0, 1 \leq \rho_i < 2}^{t - 1}\log_{2}\rho_i + \frac{1}{t}\sum_{i = 0, \frac{1}{2} < \rho_i < 1}^{t - 1}\log_{2}\frac{1}{\rho_i} + \frac{1}{t}\sum_{i = 0, \rho_i \leq \frac{1}{2}}^{t - 1}\log_{2}\frac{1}{\rho_i} \\
        & \leq 2 + \frac{1}{t}\sum_{i = 0, \rho_i \geq 2}^{t - 1}\log_{2}\rho_i + \frac{1}{t}\sum_{i = 0, \rho_i \leq \frac{1}{2}}^{t - 1}\log_{2}\frac{1}{\rho_i},
    \end{split}
    \end{equation}
    where the inequality is due to $\log_{2}\rho_i \leq 1$ for $\rho_i < 2$ and $\log_{2}\frac{1}{\rho_i} \leq 1$ for $\rho_i > \frac{1}{2}$. Using the definition of $\omega$ and (b) in Lemma~\ref{lemma_omega}, we obtain that
    \begin{equation}\label{thm:line_search_proof_5}
    \begin{split}
        & \frac{1}{t}\sum_{i = 0, \rho_i \geq 2}^{t - 1}\log_{2}\rho_i = \frac{\log_{2}e}{t}\sum_{i = 0, \rho_i \geq 2}^{t - 1}\log\rho_i = \frac{\log_{2}e}{t}\sum_{i = 0, \rho_i \geq 2}^{t - 1}(\rho_i - 1 - \omega(\rho_i - 1)) \\
        & \leq \frac{\log_{2}e}{t}\sum_{i = 0, \rho_i \geq 2}^{t - 1}(\frac{2\rho_i}{\rho_i - 1}\omega(\rho_i - 1) - \omega(\rho_i - 1)) = \frac{\log_{2}e}{t}\sum_{i = 0, \rho_i \geq 2}^{t - 1}\frac{\rho_i + 1}{\rho_i - 1}\omega(\rho_i - 1) \\
        & \leq \frac{3\log_{2}e}{t}\sum_{i = 0, \rho_i \geq 2}^{t - 1}\omega(\rho_i - 1).
    \end{split}
    \end{equation}
    Similarly, using (c) in Lemma~\ref{lemma_omega}, we obtain that
    \begin{equation}\label{thm:line_search_proof_6}
    \begin{split}
        & \frac{1}{t}\sum_{i = 0, \rho_i \leq \frac{1}{2}}^{t - 1}\log_{2}\frac{1}{\rho_i} = \frac{\log_{2}e}{t}\sum_{i = 0, \rho_i \leq \frac{1}{2}}^{t - 1}\log\frac{1}{\rho_i} = \frac{\log_{2}e}{t}\sum_{i = 0, \rho_i \leq \frac{1}{2}}^{t - 1}(\omega(\rho_i - 1) + 1 - \rho_i) \\
        & \leq \frac{\log_{2}e}{t}\sum_{i = 0, \rho_i \leq \frac{1}{2}}^{t - 1}(\omega(\rho_i - 1) + \frac{1+ \rho_i}{1 - \rho_i}\omega(\rho_i - 1)) = \frac{\log_{2}e}{t}\sum_{i = 0, \rho_i \leq \frac{1}{2}}^{t - 1}\frac{2}{1 - \rho_i}\omega(\rho_i - 1) \\
        & \leq \frac{4\log_{2}e}{t}\sum_{i = 0, \rho_i \leq \frac{1}{2}}^{t - 1}\omega(\rho_i - 1).
    \end{split}
    \end{equation}
    Combining \eqref{thm:line_search_proof_4}, \eqref{thm:line_search_proof_5} and \eqref{thm:line_search_proof_6}, we prove that
    \begin{equation}\label{thm:line_search_proof_7}
    \begin{split}
        & \frac{1}{t}\sum_{i = 0}^{t - 1}\max\{\log_{2}\rho_i, \log_{2}\frac{1}{\rho_i}\} \leq 2 + \frac{1}{t}\sum_{i = 0, \rho_i \geq 2}^{t - 1}\log_{2}\rho_i + \frac{1}{t}\sum_{i = 0, \rho_i \leq \frac{1}{2}}^{t - 1}\log_{2}\frac{1}{\rho_i} \\
        & \leq 2 + \frac{4\log_{2}e}{t}\sum_{i = 0}^{t - 1}\omega(\rho_i - 1) \leq 2 + \frac{6}{t}\Big(\Psi(\tilde{B}_{0}) + 2\sum_{i = 0}^{t - 1}D_i\Big) .
    \end{split}
    \end{equation}
    where we use the fact that $\omega(\rho_i - 1) \geq 0$ for any $i \geq 0$ and the last inequality is due to \eqref{eq:omega} in Proposition~\ref{lemma_BFGS_superlinear}. Leveraging \eqref{thm:line_search_proof_1}, \eqref{thm:line_search_proof_2}, \eqref{thm:line_search_proof_3} and \eqref{thm:line_search_proof_7}, we have that
    \begin{equation*}
    \begin{split}
        \Lambda_t & \leq 2 + \log_{2}\Big(1 + \frac{1 - \beta}{\beta - \alpha} + \frac{2(1 - \beta)}{\beta - \alpha}\frac{\sum_{i = 0}^{t - 1}D_i}{t}\Big) \\
        & + 2\log_{2}\Big(3 +\log_{2}2(1 - \alpha) + \log_{2}\big(1 + \frac{\sum_{i = 0}^{t - 1}D_i}{t}) + \frac{6}{t}\big(\Psi(\tilde{B}_{0}) + 2\sum_{i = 0}^{t - 1}D_i\big)\Big) \\
        & \leq 2 + \log_{2}\Big(1 + \frac{1 - \beta}{\beta - \alpha} + \frac{2(1 - \beta)}{\beta - \alpha}\frac{\sum_{i = 0}^{t - 1}D_i}{t}\Big) \\
        & \quad + 2\log_{2}\Big(\log_{2}16(1 - \alpha) + \log_{2}\big(1 + \frac{\sum_{i = 0}^{t - 1}D_i}{t}) + \frac{6\Psi(\tilde{B}_{0}) + 12\sum_{i = 0}^{t - 1}D_i}{t}\Big) \\
        & \leq 2 + \log_{2}\Big(1 + \frac{1 - \beta}{\beta - \alpha} + \frac{2(1 - \beta)}{\beta - \alpha}\frac{\sum_{i = 0}^{t - 1}D_i}{t}\Big) \\
        & \quad + 2\log_{2}\Big(\log_{2}16(1 - \alpha) + \log_{2}\big(1 + \frac{6\Psi(\tilde{B}_{0}) + 14\sum_{i = 0}^{t - 1}D_i}{t}\Big).
    \end{split}
    \end{equation*}
    Using \eqref{sum_of_C_t} from the proof of Theorem~\ref{thm:second_linear_rate}, i.e.,
    \begin{equation*}
        \sum_{i = 0}^{t - 1}D_i \leq D_0\left(\Psi(\bar{B}_{0}) + \frac{3(1 + D_0)^2}{\alpha(1 - \beta)}\right).
    \end{equation*}
    We prove the line search complexity as
    \begin{equation*}
        \Lambda_t = \mathcal{O}\left(\log(1 + \frac{\Gamma}{t}) + \log\log(1 + \frac{\Psi(\tilde{B}_{0}) + \Gamma}{t})\right)
    \end{equation*}
    where
    \begin{equation*}
        \Gamma = \mathcal{O}\left(D_0(\Psi(\bar{B}_{0}) + (1 + D_0)^2)\right)
    \end{equation*}
    For the special case of $B_0 = a I$ for $a > 0$, just replace $\Psi(\bar{B_0})$ and $\Psi(\tilde{B_0})$ by $\Delta_1$ and $\Delta_2$ defined in \eqref{Delta_1}, \eqref{Delta_2}, respectively.

\section{Proof of Strong Self-Concordance}\label{proof_of_concordance}

Consider the log-sum-exp function defined as $f(x) = \log{(\sum_{i = 1}^{n}\exp({c_i^\top x - b_i}))} + \frac{1}{2}\sum_{i = 1}^{n}(c_i^\top x)^2$, we have that $\nabla{f}(x) = \sum_{i = 1}^{n}\pi_i c_i + \sum_{i = 1}^{n}(c_i^\top x)c_i$ where $\pi_i = \frac{\exp({c_i^\top x - b_i}))}{\sum_{j = 1}^{n}\exp({c_j^\top x - b_j}))}$ and $\nabla^2{f}(x) = \sum_{i = 1}^{n}(\pi_i + 1) c_i c_i^\top - (\sum_{i = 1}^{n}\pi_i c_i)(\sum_{i = 1}^{n}\pi_i c_i)^\top$. From proof in \cite{logsumconvex}, this log-sum-exp function is strictly convex. Moreover, we also need to prove that this function is strongly self-concordant. Notice that, with respect to the operator $B = \sum_{i = 1}^{n}c_i c_i^\top$, this function f is strongly convex with parameter $1$ and its Hessian is Lipschitz continuous with parameter $2$ (check example 1 of \cite{uniformlyconvex}\footnote{N. Doikov and Y. Nesterov. Minimizing uniformly convex functions by cubic regularization of newton method. arXiv, 1905.02671, 2019}). Hence, using results from Example 4.1 in \cite{rodomanov2020greedy}, the log-sum-exp function is strongly self-concordant.

The proof of that the logistic regression function $f(x) = \frac{1}{N}\sum_{i = 1}^{N}\ln{(1 + e^{-y_i z_i^\top x})}$ without $l_2$ regularization is strongly self-concordant is almost the same. It has the similar structure with the log-sum-exp function $f(x) = log{(\sum_{i = 1}^{n}e^{c_i^\top x - b_i})} + \frac{1}{2}\sum_{i = 1}^{n}(c_i^\top x)^2$. Notice that in our BFGS method, we use the line search scheme such that we always have $f(x_t) \leq f(x_0)$ for any $t \geq 0$. Hence, the iterations generated by BFGS method with weak-Wolfe line search conditions always stay in the bounded set $\{x|f(x) \leq f(x_0)\}$ where $x_0$ is the initial point. In this bounded set, the logistic regression function is strongly convex and its Hessian is smooth with respect to the operator matrix $B = \sum_{i = 1}^{n}z_iz_i^\top$. According to the Example 4.1 from that greedy quasi-Newton paper, if a function is strongly convex and its Hessian is smooth with respect to some matrix $B$, then the function is strongly self-concordant. Hence, the logistic regression function is strongly self-concordant. Similarly, for the hard cubic function, we can show that it is strongly convex and its Hessian is Lipschitz smooth with respect to some operator matrix $B$ and therefore the hard cubic function is also strongly self-concordant.

\vspace{-3mm}

\section{Additional Numerical Experiments}\label{add_experiments}

\vspace{-3mm}

Additional numerical experiments on the hard cubic function and the logistic regression for different dimensions are presented in figures~\ref{fig:5} and \ref{fig:6}. The empirical results of the performance of different optimization methods for the hard cubic function with respect to the number of gradient evaluations and the time in seconds are in Figures~\ref{fig:7} and \ref{fig:8}. Additional numerical results of the values of the step sizes of BFGS method are in Figure~\ref{fig:9}. Additional results of the performance of different optimization methods with transformation matrix are in Figure~\ref{fig:10}. The convergence performance of BFGS method is similar to the empirical results from Figures~\ref{fig:1}, \ref{fig:2}, \ref{fig:3}, and \ref{fig:4} in section~\ref{sec:experiments}. 

\begin{figure}[t!]
    \centering
    \subfigure[$d = 50$.]{\includegraphics[width=0.49\linewidth]{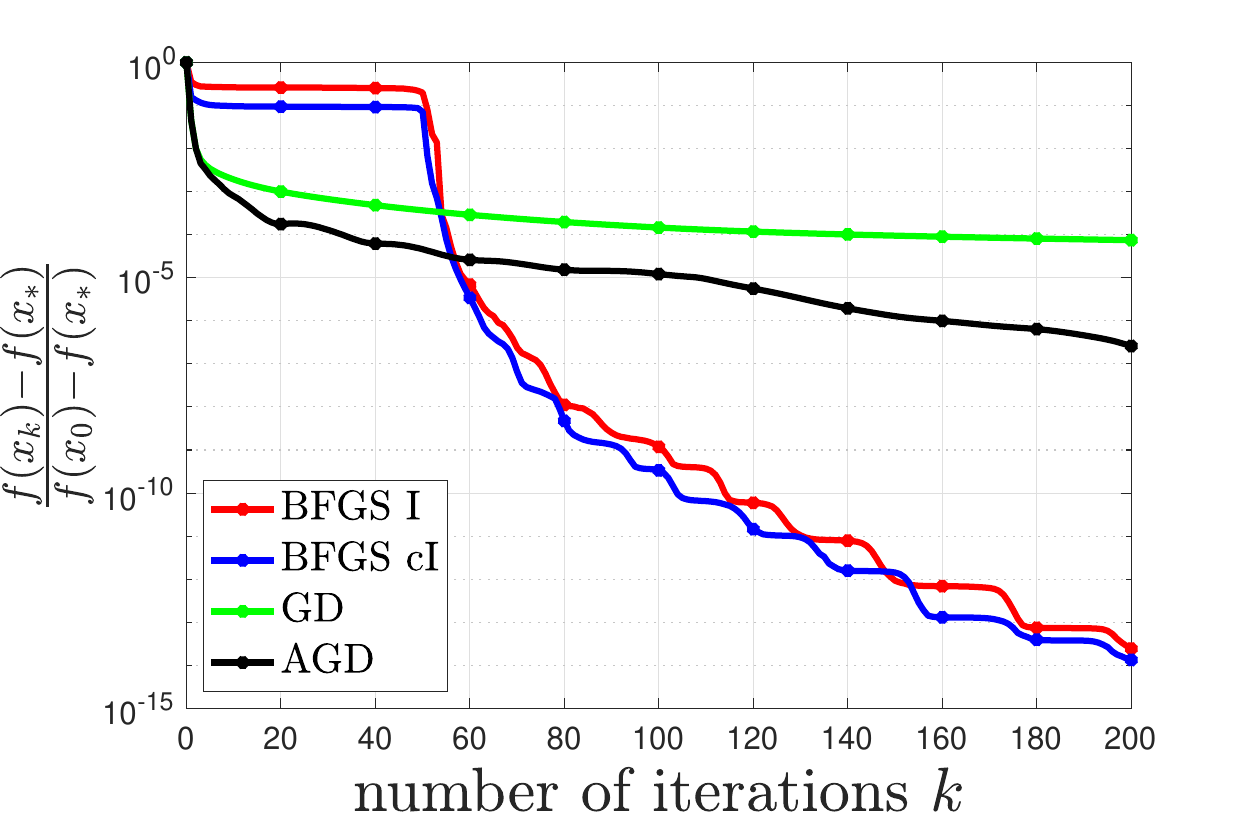}}
    \subfigure[$d = 300$.]{\includegraphics[width=0.49\linewidth]{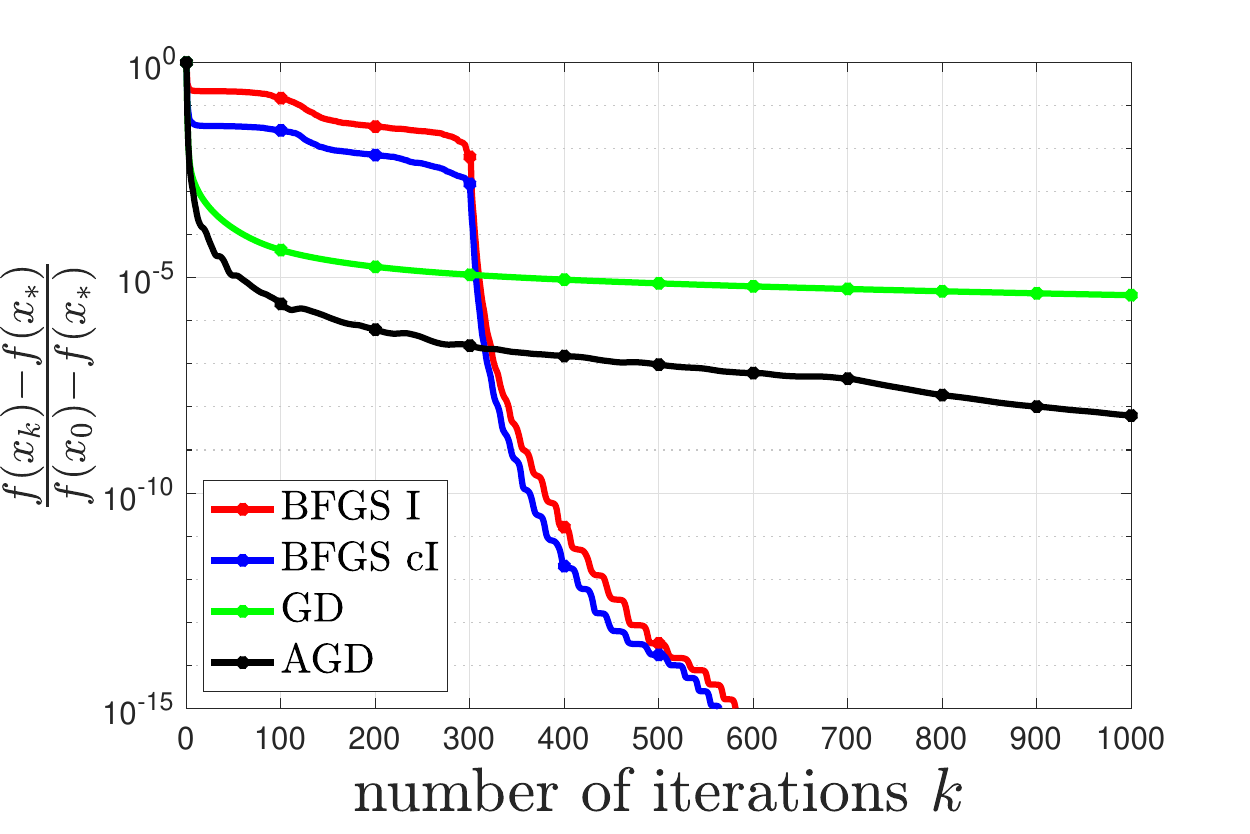}}
    \subfigure[$d = 600$.]{\includegraphics[width=0.49\linewidth]{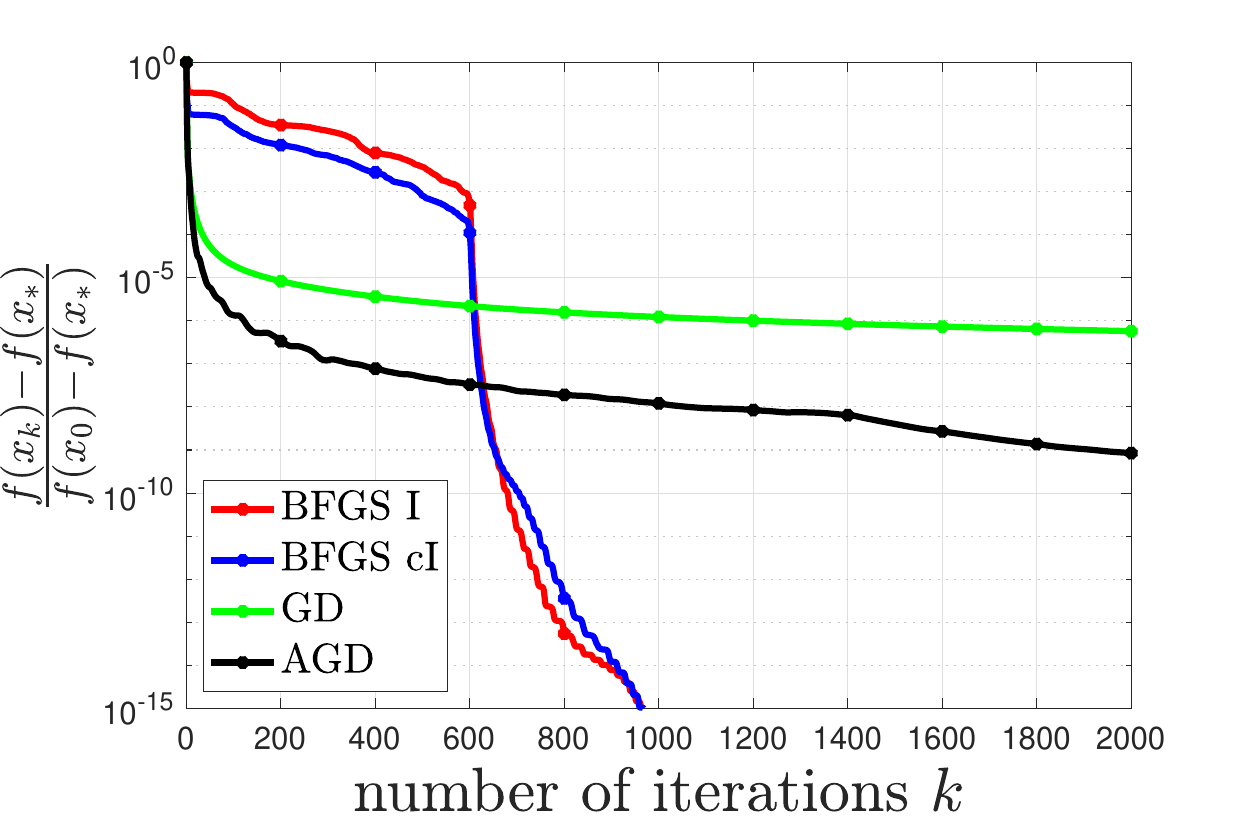}}
    \subfigure[$d = 2000$.]{\includegraphics[width=0.49\linewidth]{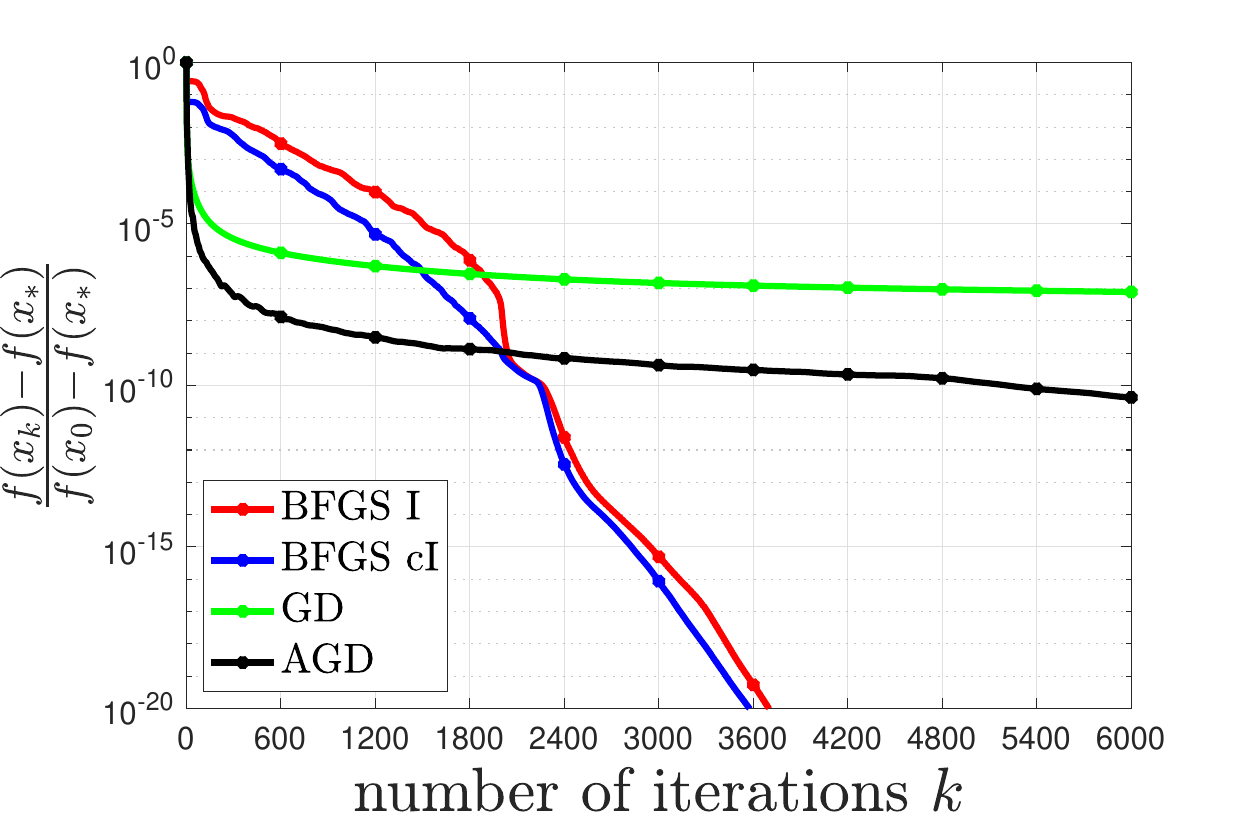}}
    \caption{Convergence rates of BFGS with different $B_0$, gradient descent and accelerated gradient descent for solving the hard cubic function with different dimensions.}
    \label{fig:5}
\end{figure}

\begin{figure}[t!]
    \centering
    \subfigure[$d = 50$.]{\includegraphics[width=0.49\linewidth]{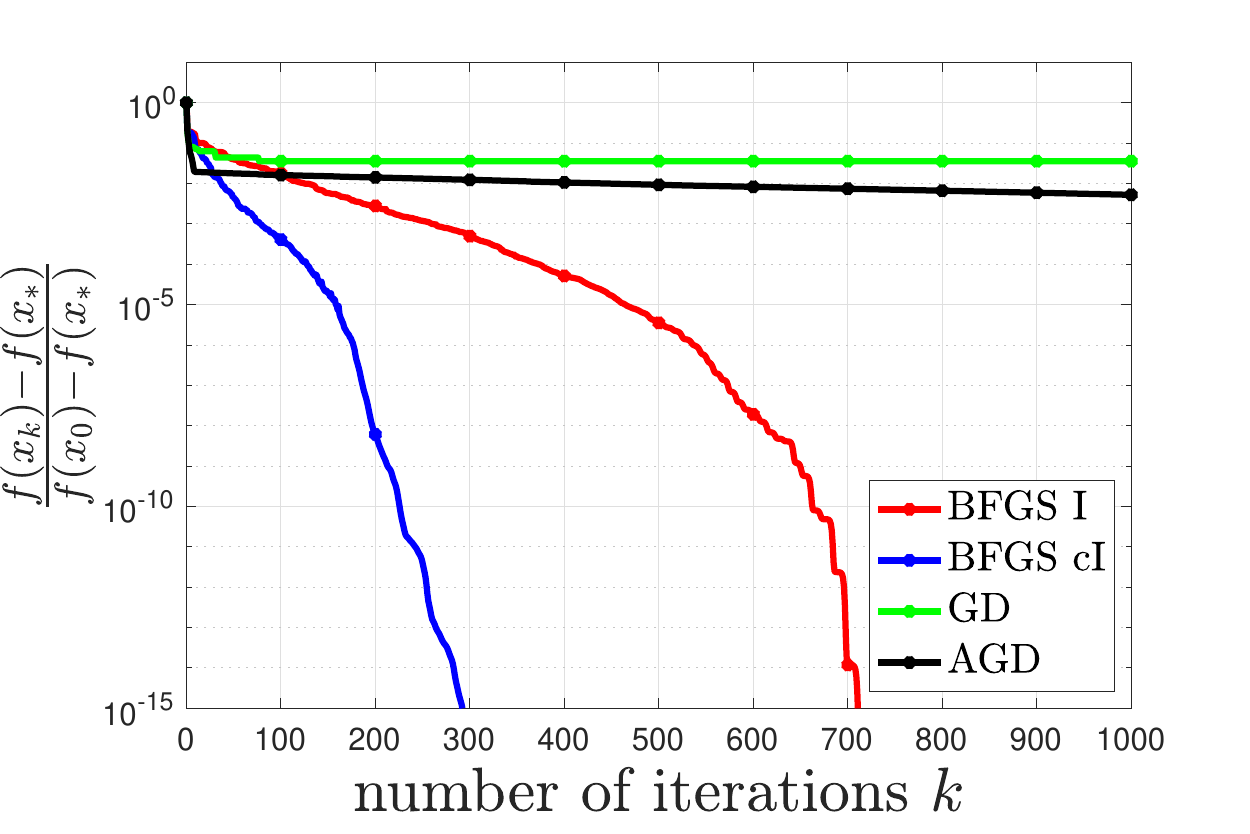}}
    \subfigure[$d = 300$.]{\includegraphics[width=0.49\linewidth]{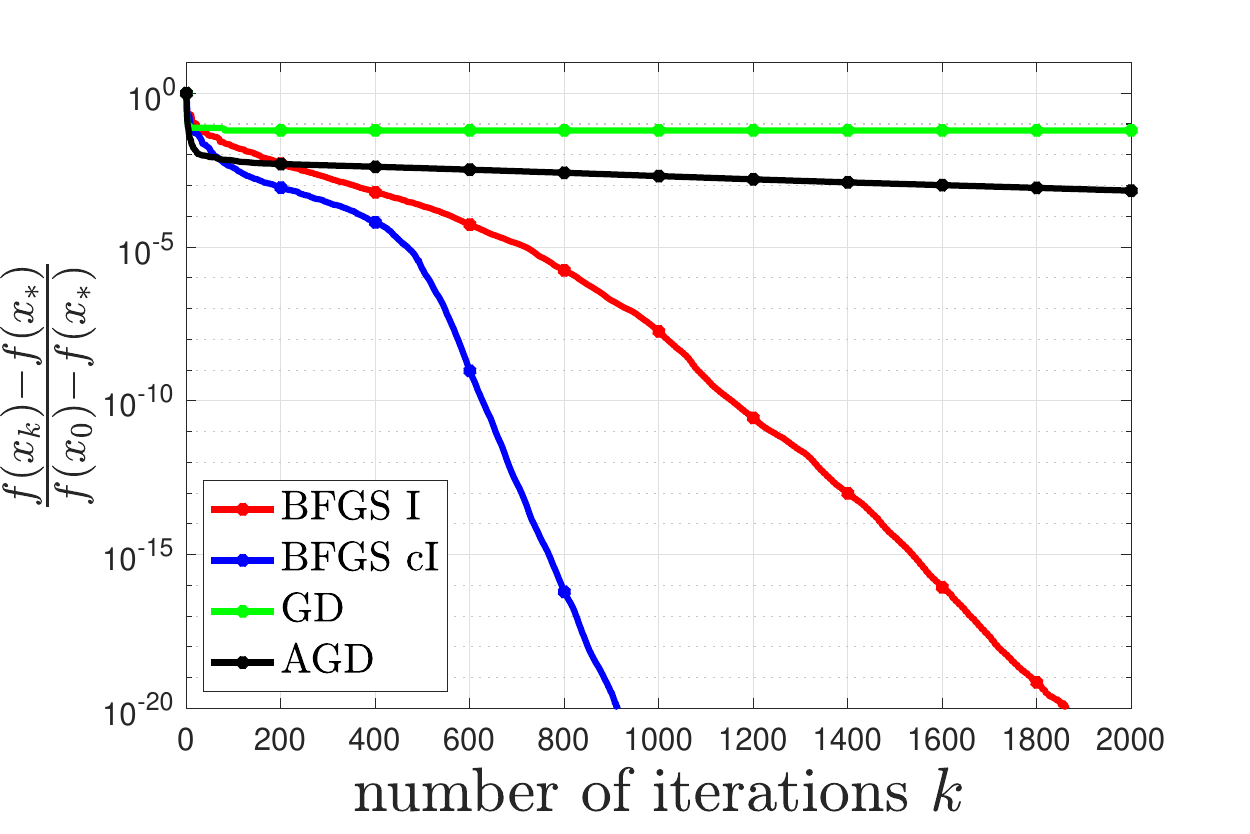}}
    \subfigure[$d = 600$.]{\includegraphics[width=0.49\linewidth]{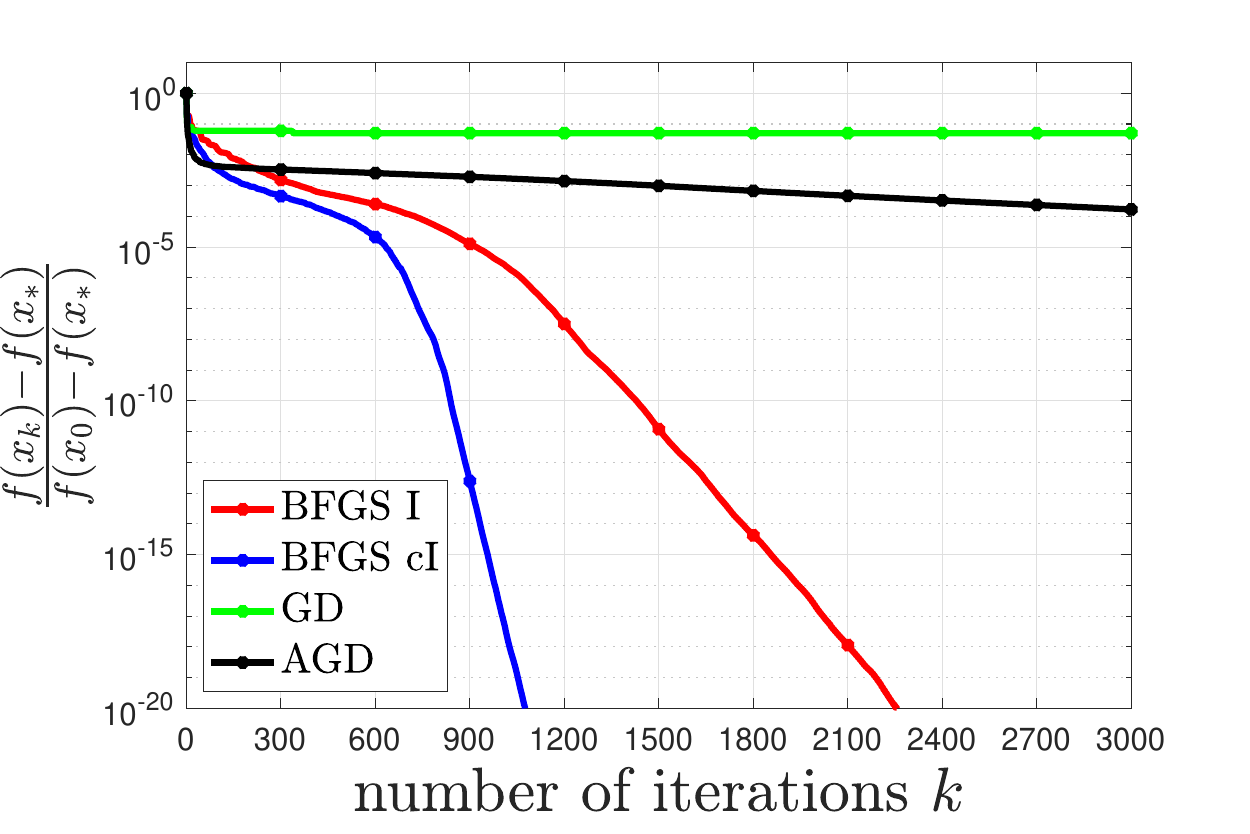}}
    \subfigure[$d = 2000$.]{\includegraphics[width=0.49\linewidth]{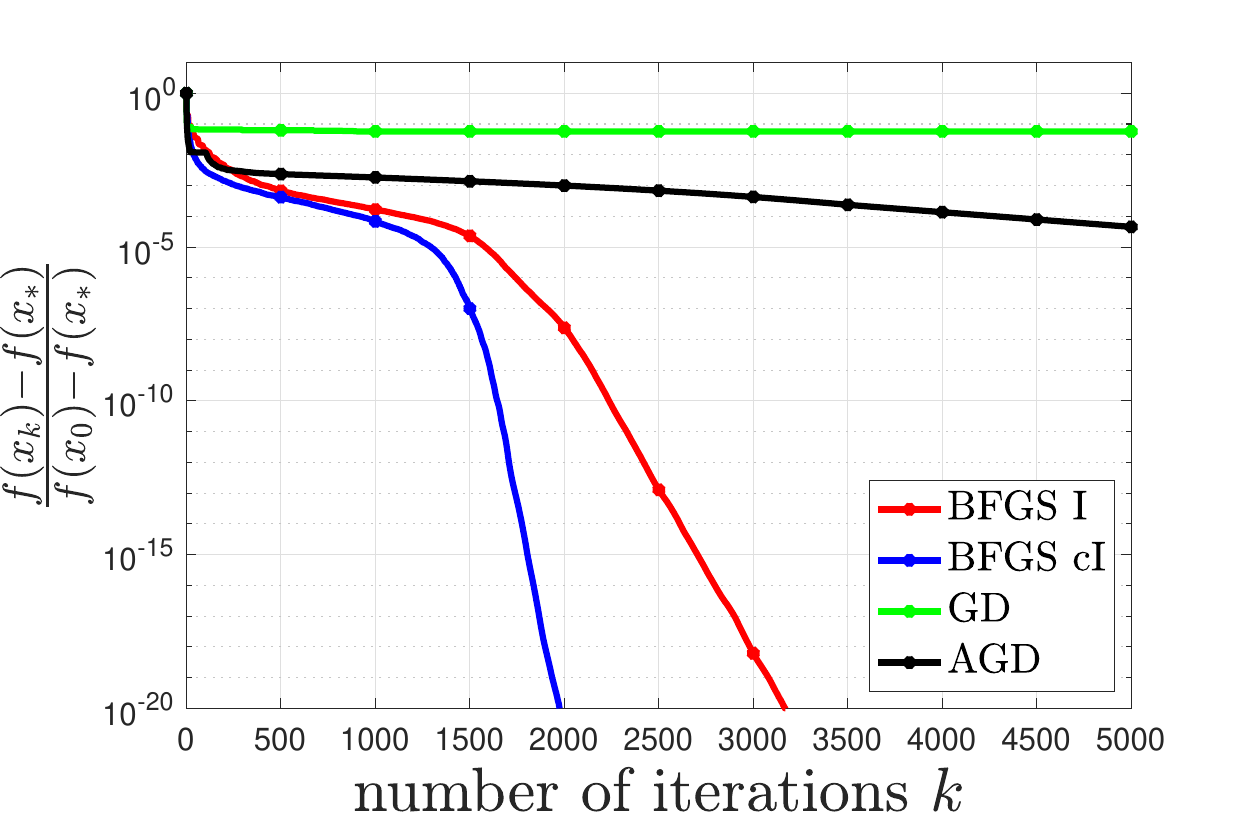}}
    \caption{Convergence rates of BFGS with different $B_0$, gradient descent and accelerated gradient descent for solving the logistic regression function with different dimensions.}
    \label{fig:6}
\end{figure}

\begin{figure}[t!]
    \centering
    \subfigure[$d = 100$.]{\includegraphics[width=0.49\linewidth]{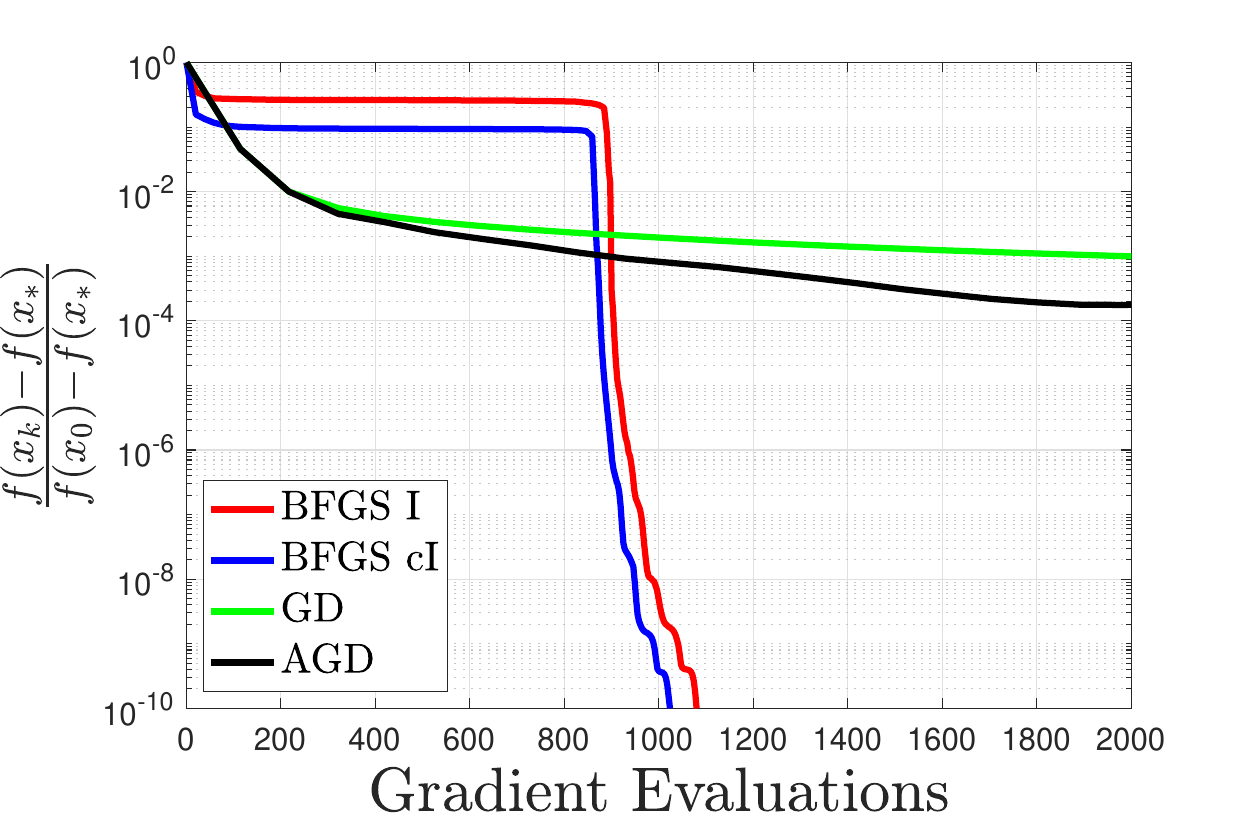}}
    \subfigure[$d = 200$.]{\includegraphics[width=0.49\linewidth]{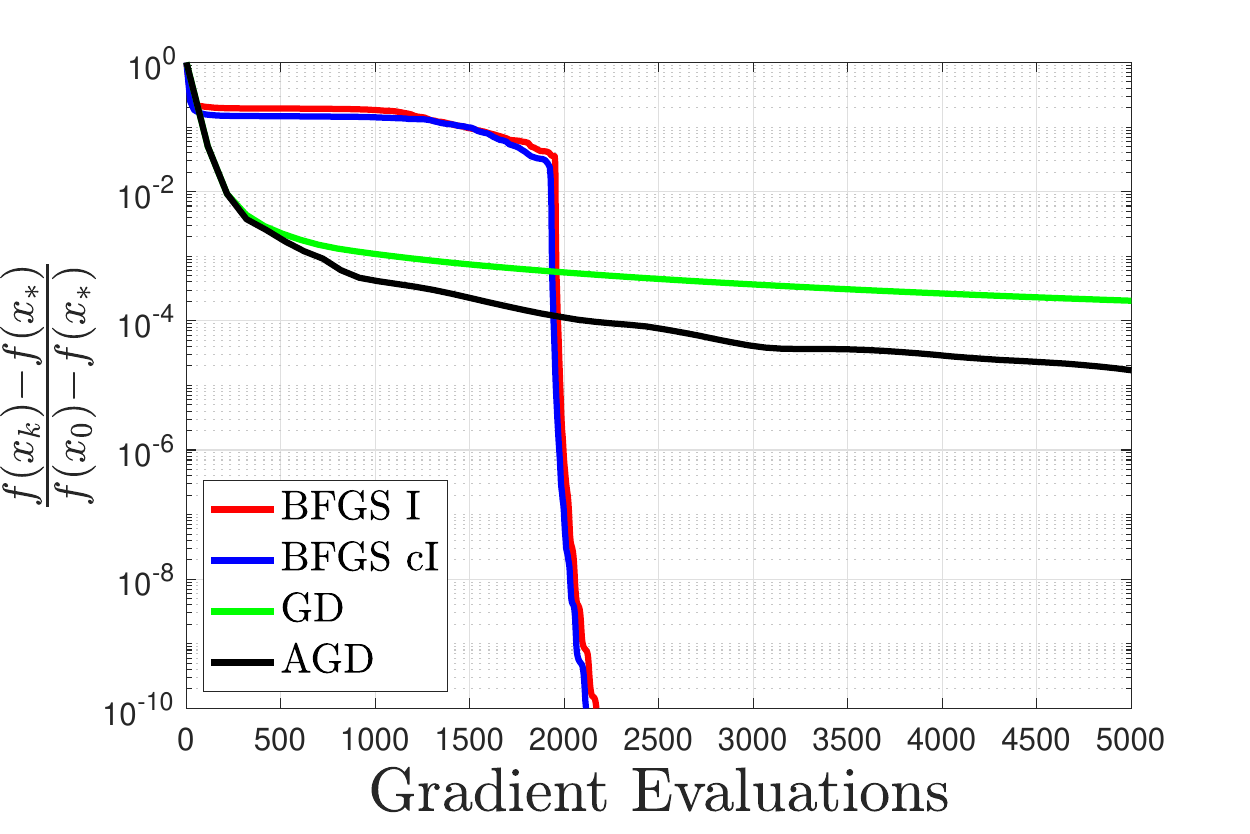}}
    \subfigure[$d = 500$.]{\includegraphics[width=0.49\linewidth]{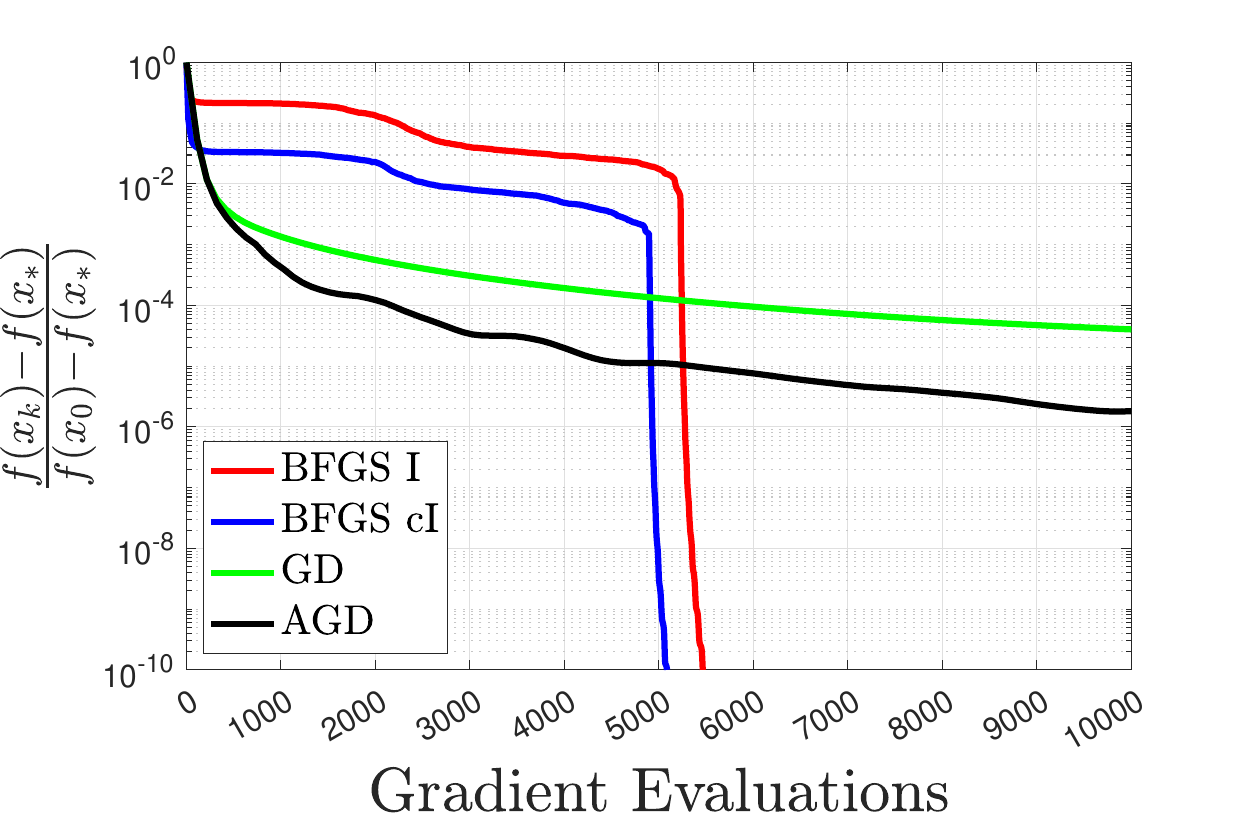}}
    \subfigure[$d = 1000$.]{\includegraphics[width=0.49\linewidth]{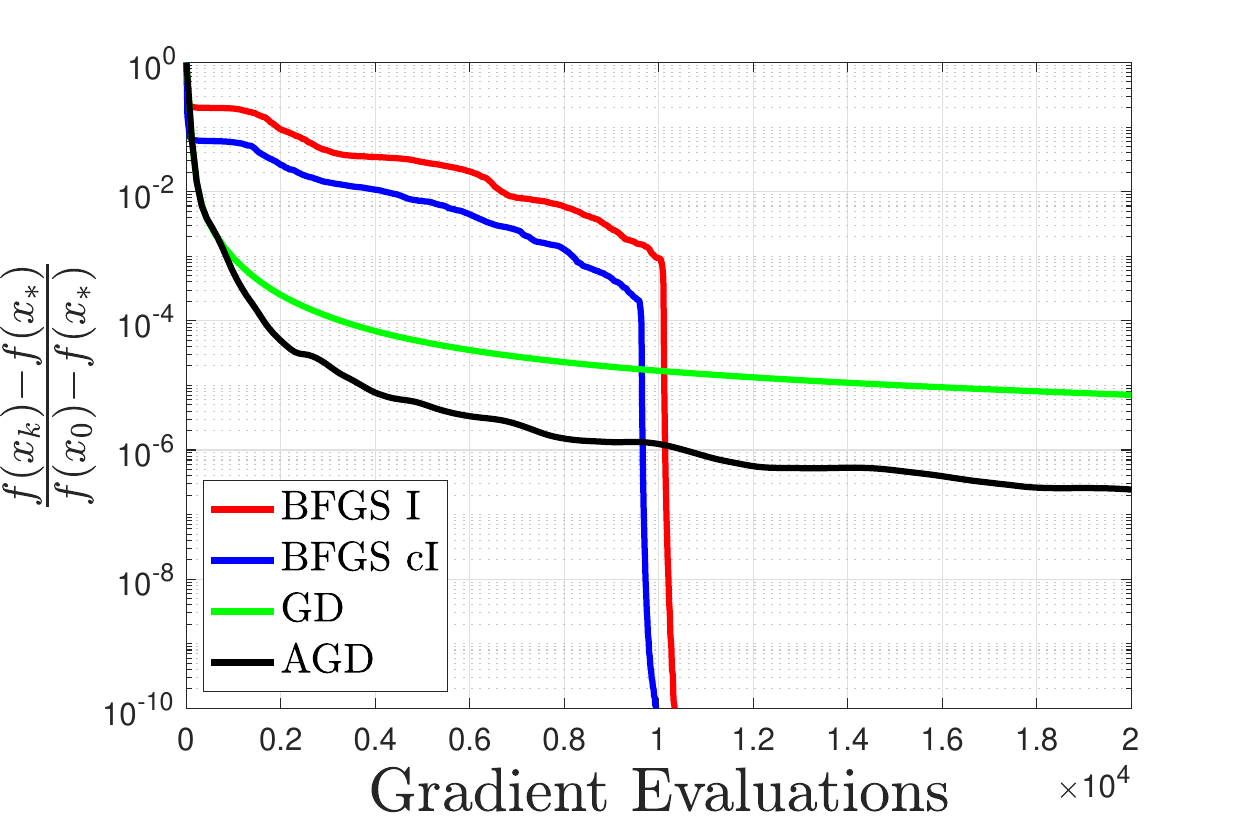}}
    \caption{Convergence rates of BFGS with different $B_0$, gradient descent and accelerated gradient descent for solving the hard cubic function with respect to the number of gradient evaluations.}
    \label{fig:7}
\end{figure}

\begin{figure}[t!]
    \centering
    \subfigure[$d = 100$.]{\includegraphics[width=0.49\linewidth]{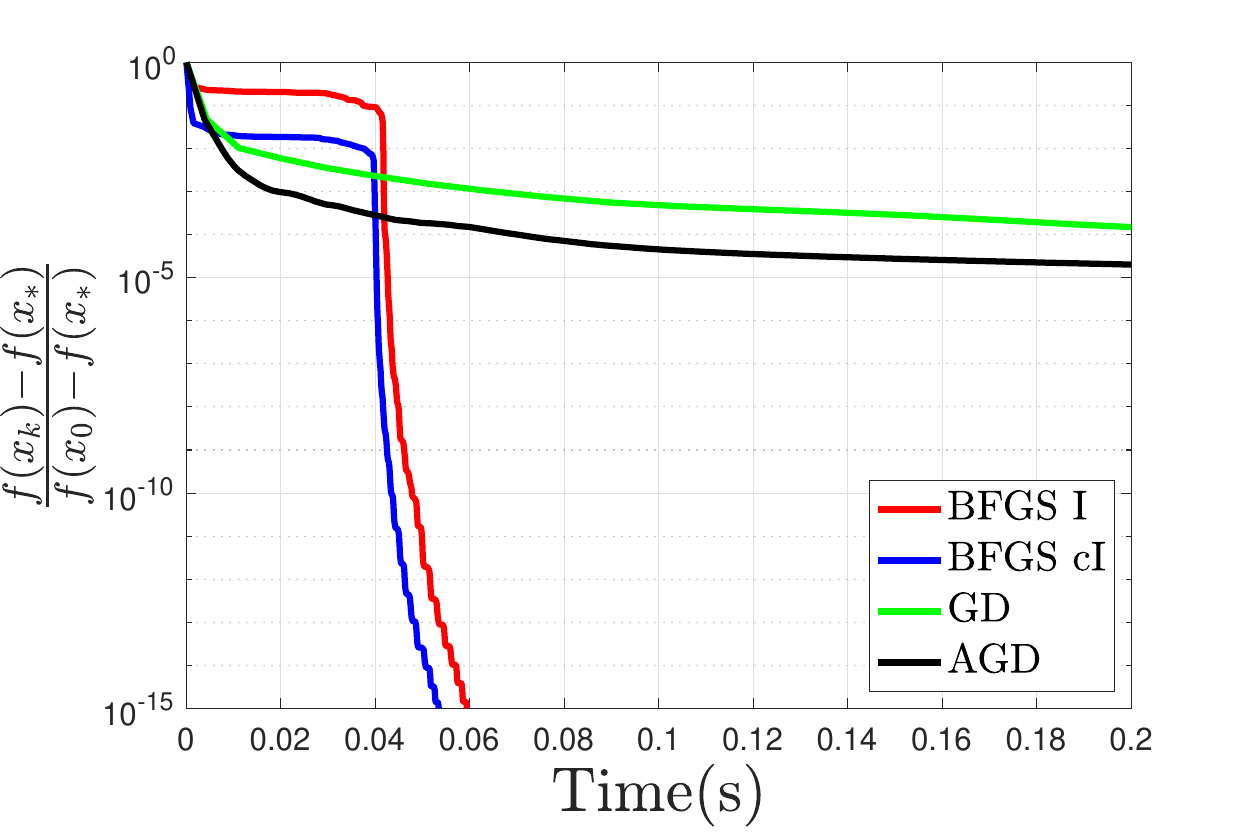}}
    \subfigure[$d = 200$.]{\includegraphics[width=0.49\linewidth]{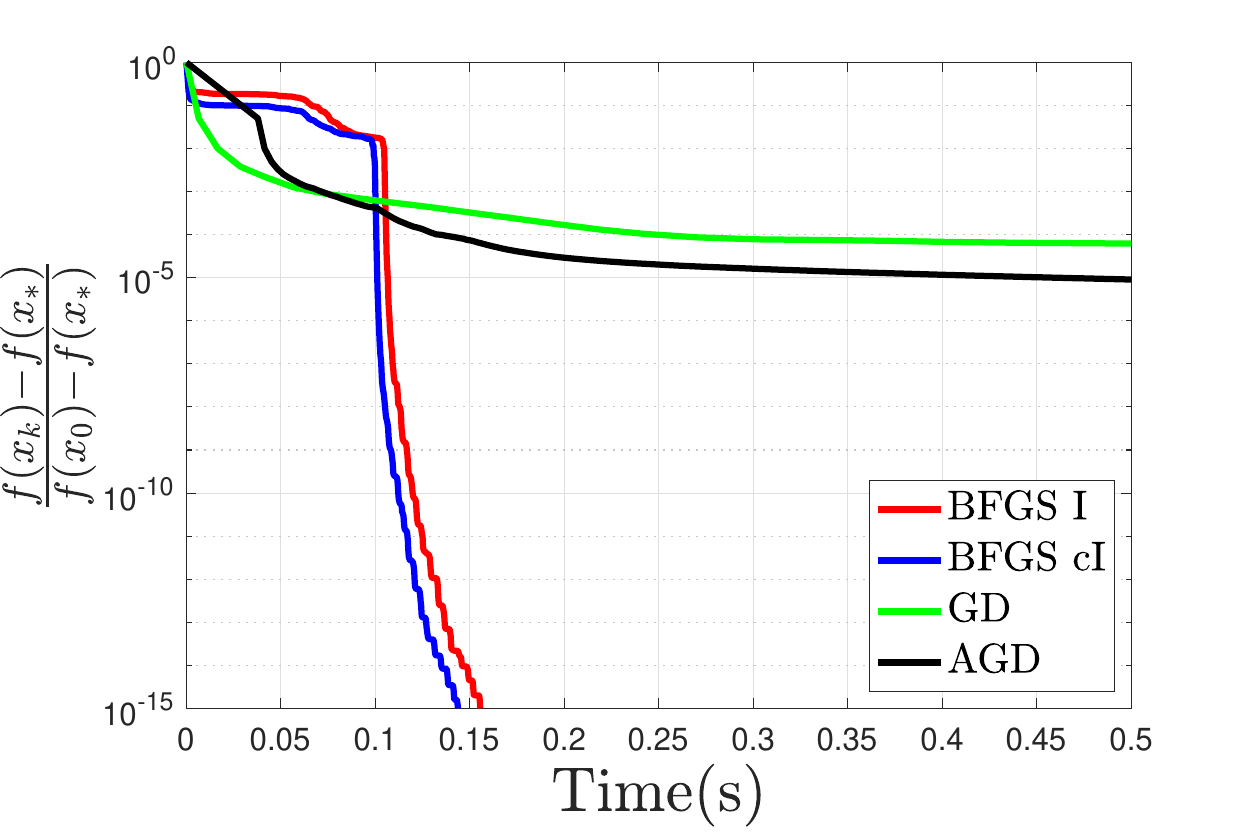}}
    \subfigure[$d = 500$.]{\includegraphics[width=0.49\linewidth]{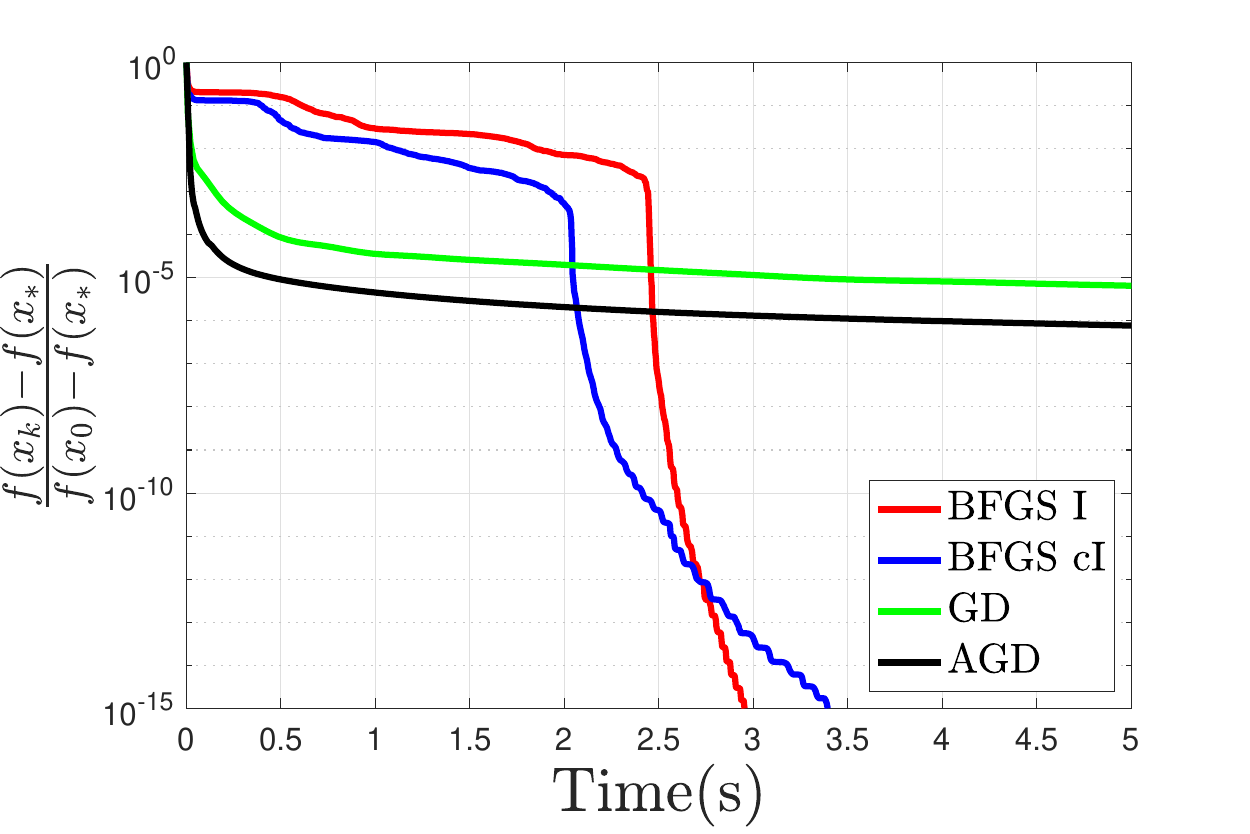}}
    \subfigure[$d = 1000$.]{\includegraphics[width=0.49\linewidth]{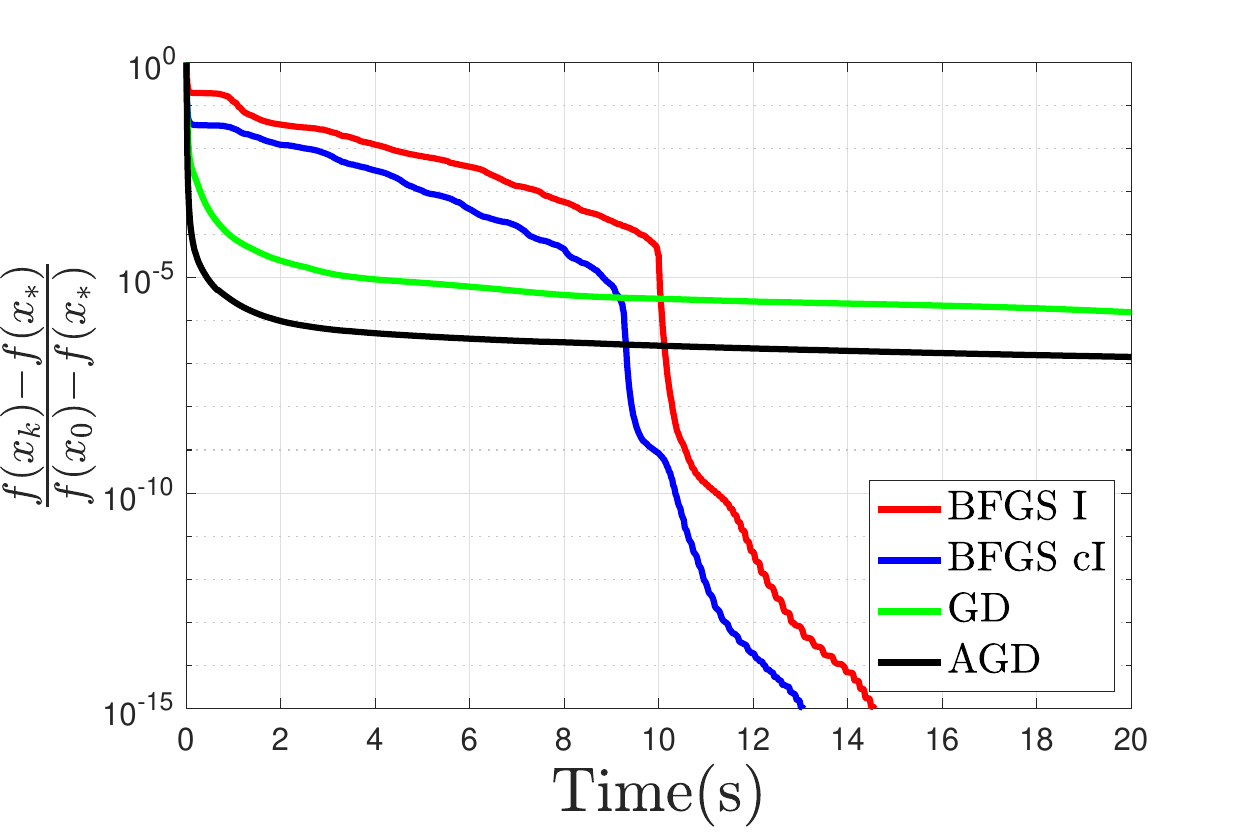}}
    \caption{Convergence rates of BFGS with different $B_0$, gradient descent and accelerated gradient descent for solving the hard cubic function with respect to the time in seconds.}
    \label{fig:8}
\end{figure}

\begin{figure}[t!]
    \centering
    \subfigure[$d = 100$.]{\includegraphics[width=0.45\linewidth]{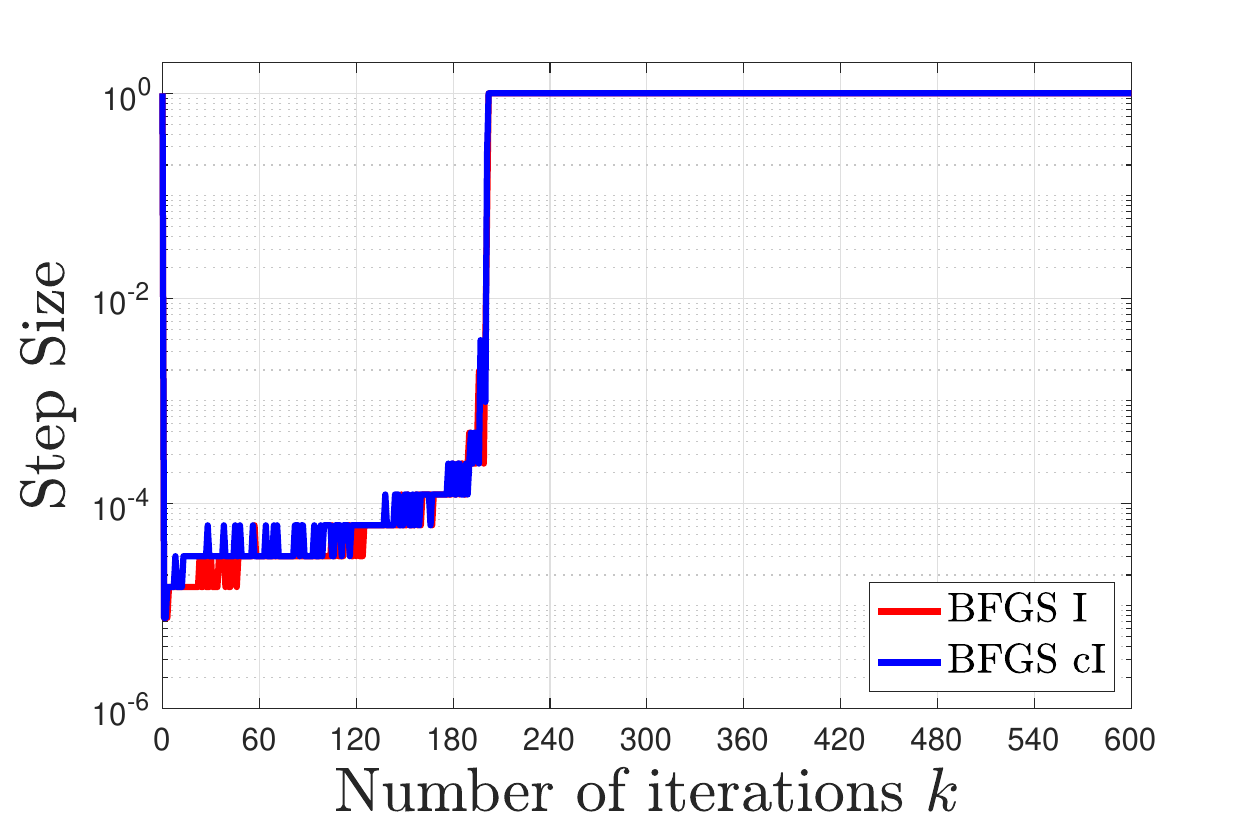}}
    \subfigure[$d = 1000$.]{\includegraphics[width=0.45\linewidth]{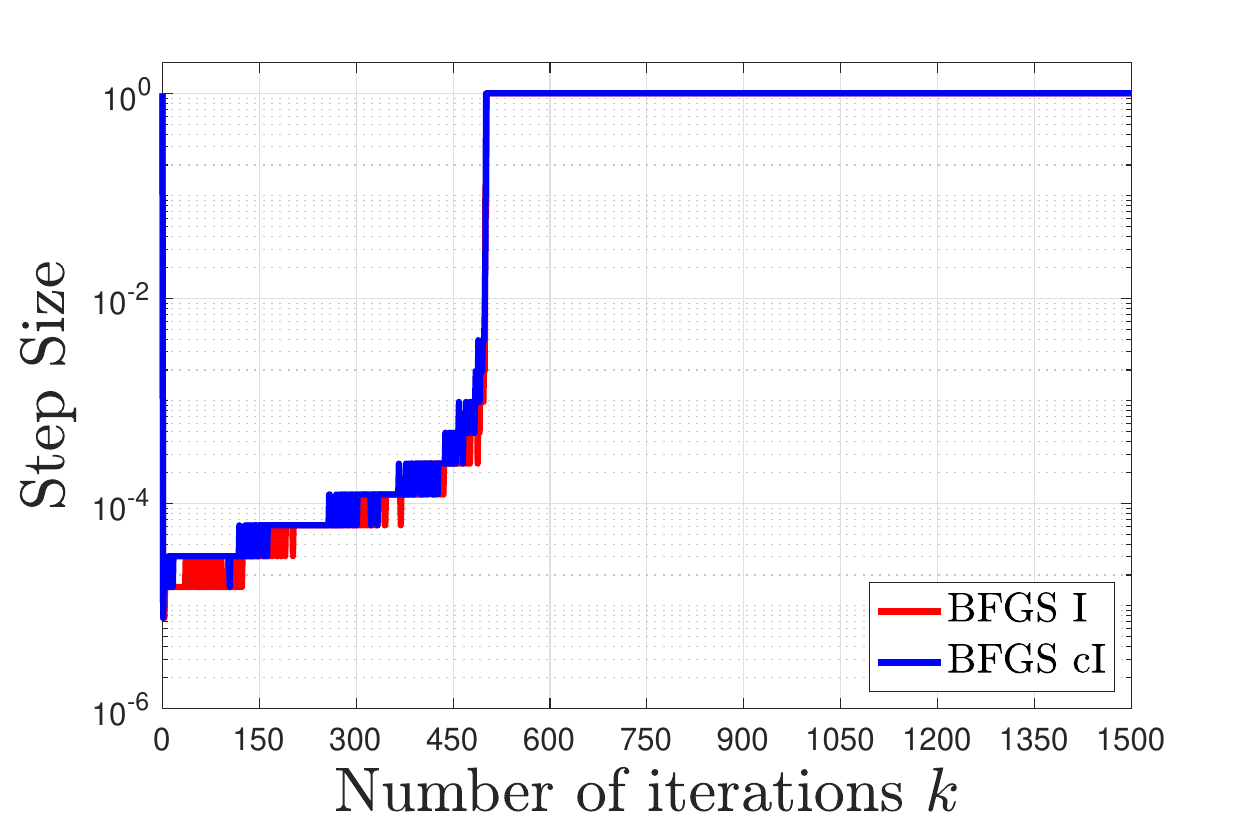}}
    \caption{Step size of BFGS with different $B_0$ using inexact line search for solving the hard cubic function with different dimensions.}
    \label{fig:9}
\end{figure}

\begin{figure}[t!]
    \centering
    \subfigure[$d = 100$.]{\includegraphics[width=0.45\linewidth]{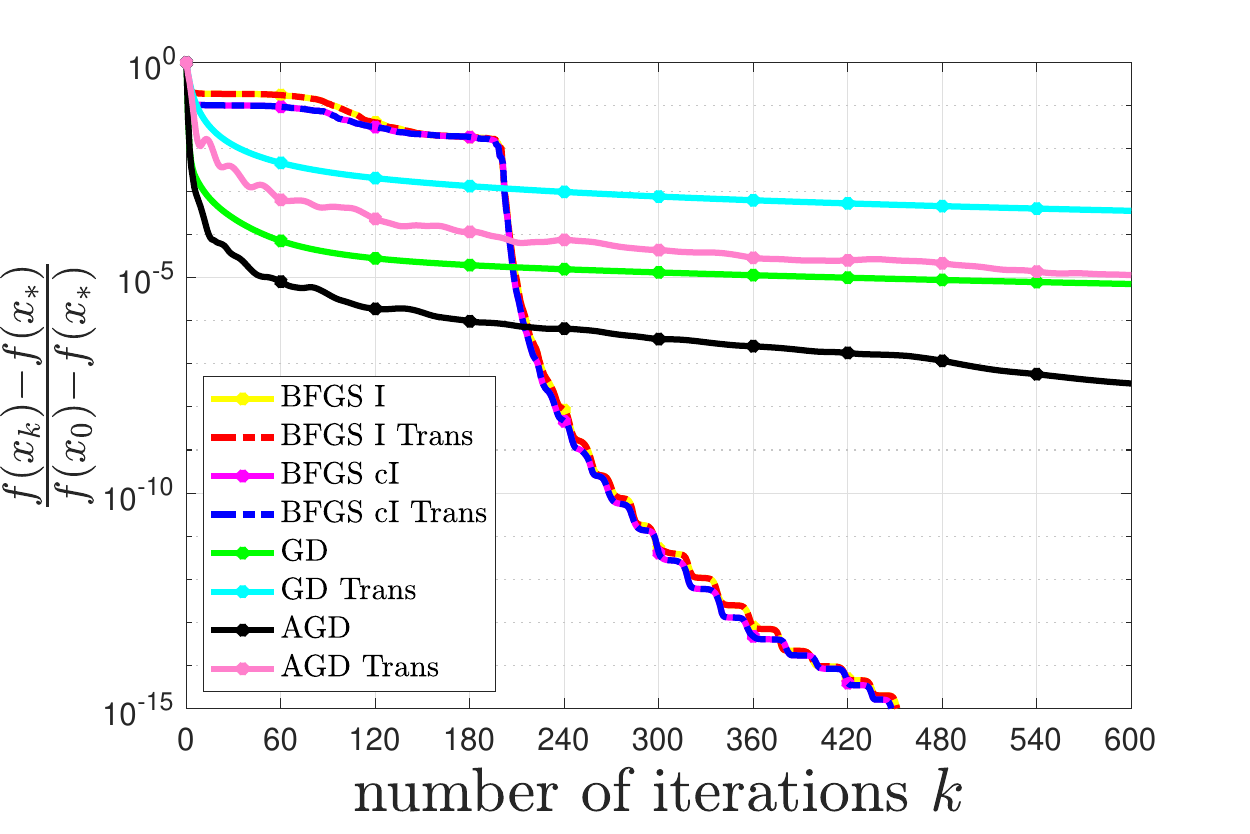}}
    \subfigure[$d = 1000$.]{\includegraphics[width=0.45\linewidth]{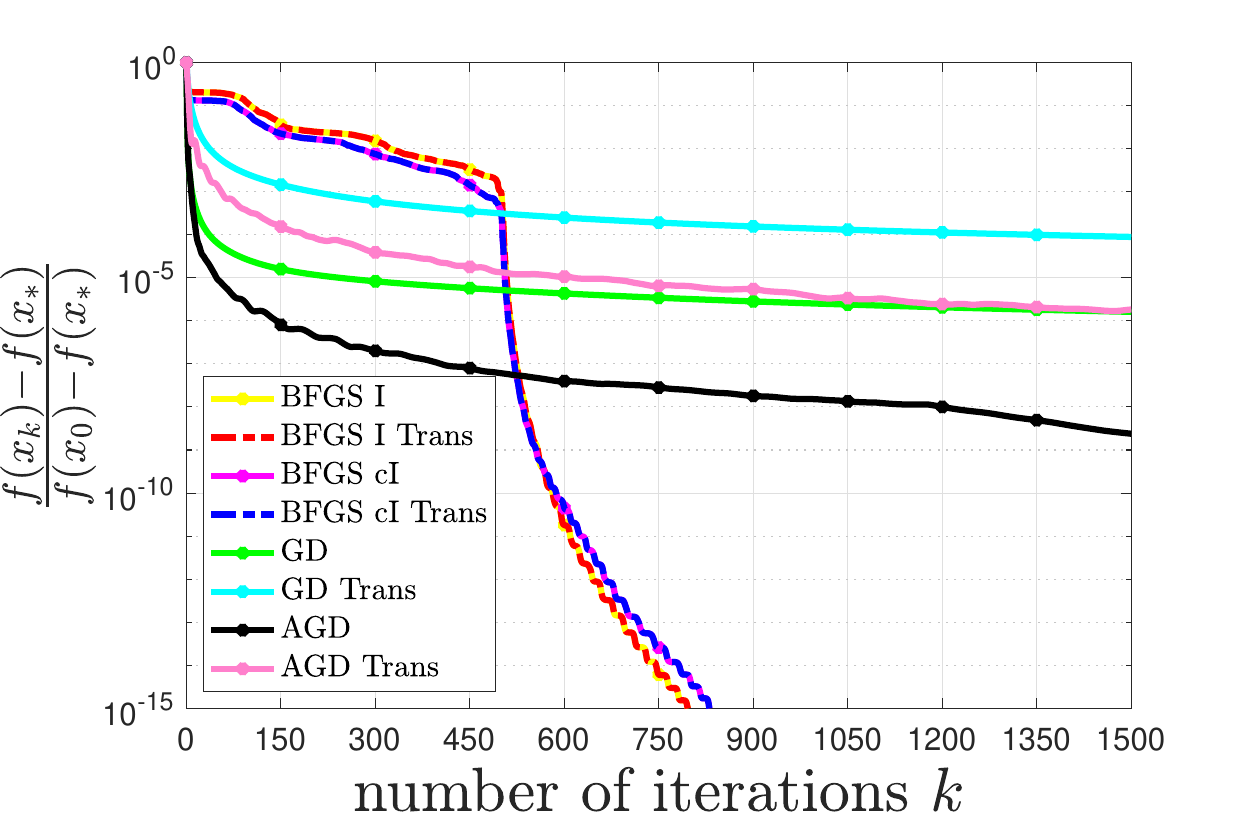}}
    \caption{Convergence rates of BFGS with different $B_0$, gradient descent and accelerated gradient descent for solving the hard cubic function with transformation matrix $A$.}
    \label{fig:10}
\end{figure}

\clearpage

\printbibliography

\end{document}